\numberwithin{equation}{section}
\newtheorem{theorem}{Theorem}[section]
\newtheorem{lemma}[theorem]{Lemma}
\newtheorem{corollary}[theorem]{Corollary}
\newtheorem{proposition}[theorem]{Proposition}
\theoremstyle{definition}
\newtheorem{definition}[theorem]{Definition}
\newtheorem{assumption}[theorem]{Assumption}
\newtheorem{example}[theorem]{Example}
\theoremstyle{remark}
\newtheorem{remark}[theorem]{Remark}
\begin{document}
\title{Smooth Subsonic and Transonic Flows with Nonzero Angular Velocity and Vorticity to steady Euler-Poisson system in a Concentric Cylinder}
\author{Shangkun Weng\thanks{School of Mathematics and Statistics, Wuhan University, Wuhan, Hubei Province, 430072, People's Republic of China. Email: skweng@whu.edu.cn}\and Wengang Yang\thanks{School of Mathematics and Statistics, Wuhan University, Wuhan, Hubei Province, 430072, People's Republic of China. Email: yangwg@whu.edu.cn}\and Na Zhang\thanks{School of Mathematics and Statistics, Wuhan University, Wuhan, Hubei Province, 430072, People's Republic of China. Email: nzhang@whu.edu.cn}}
\date{}
\maketitle

\def\be{\begin{eqnarray}}
\def\ee{\end{eqnarray}}
\def\be{\begin{equation}}
\def\ee{\end{equation}}
\def\ba{\begin{aligned}}
\def\ea{\end{aligned}}
\def\bay{\begin{array}}
\def\eay{\end{array}}
\def\bca{\begin{cases}}
\def\eca{\end{cases}}
\def\p{\partial}
\def\hphi{\hat{\phi}}
\def\bphi{\bar{\phi}}
\def\no{\nonumber}
\def\eps{\epsilon}
\def\de{\delta}
\def\De{\Delta}
\def\om{\omega}
\def\Om{\Omega}
\def\f{\frac}
\def\th{\theta}
\def\vth{\vartheta}
\def\la{\lambda}
\def\lab{\label}
\def\b{\bigg}
\def\var{\varphi}
\def\na{\nabla}
\def\ka{\kappa}
\def\al{\alpha}
\def\La{\Lambda}
\def\ga{\gamma}
\def\Ga{\Gamma}
\def\ti{\tilde}
\def\wti{\widetilde}
\def\wh{\widehat}
\def\ol{\overline}
\def\ul{\underline}
\def\Th{\Theta}
\def\si{\sigma}
\def\Si{\Sigma}
\def\oo{\infty}
\def\q{\quad}
\def\z{\zeta}
\def\co{\coloneqq}
\def\eqq{\eqqcolon}
\def\di{\displaystyle}
\def\bt{\begin{theorem}}
\def\et{\end{theorem}}
\def\bc{\begin{corollary}}
\def\ec{\end{corollary}}
\def\bl{\begin{lemma}}
\def\el{\end{lemma}}
\def\bp{\begin{proposition}}
\def\ep{\end{proposition}}
\def\br{\begin{remark}}
\def\er{\end{remark}}
\def\bd{\begin{definition}}
\def\ed{\end{definition}}
\def\bpf{\begin{proof}}
\def\epf{\end{proof}}
\def\bex{\begin{example}}
\def\eex{\end{example}}
\def\bq{\begin{question}}
\def\eq{\end{question}}
\def\bas{\begin{assumption}}
\def\eas{\end{assumption}}
\def\ber{\begin{exercise}}
\def\eer{\end{exercise}}
\def\mb{\mathbb}
\def\mbR{\mb{R}}
\def\mbZ{\mb{Z}}
\def\mc{\mathcal}
\def\mcS{\mc{S}}
\def\ms{\mathscr}
\def\lan{\langle}
\def\ran{\rangle}
\def\lb{\llbracket}
\def\rb{\rrbracket}
\def\fr#1#2{{\frac{#1}{#2}}}
\def\dfr#1#2{{\dfrac{#1}{#2}}}
\def\u{{\textbf u}}
\def\v{{\textbf v}}
\def\w{{\textbf w}}
\def\d{{\textbf d}}
\def\nn{{\textbf n}}
\def\x{{\textbf x}}
\def\e{{\textbf e}}
\def\D{{\textbf D}}
\def\U{{\textbf U}}
\def\M{{\textbf M}}
\def\F{{\mathcal F}}
\def\I{{\mathcal I}}
\def\W{{\mathcal W}}
\def\div{{\rm div\,}}
\def\curl{{\rm curl\,}}
\def\R{{\mathbb R}}
\def\g{{\textbf g}}
\def\FF{{\textbf F}}
\def\A{{\textbf A}}
\def\R{{\textbf R}}
\def\r{{\textbf r}}
\def\mE{\mathcal{E}}
\def\Div{{\rm div}}
\def\e{\mathfrak{e}}
\def\i{\mathfrak{i}}

\begin{abstract}
  In this paper, both smooth subsonic and transonic flows to steady Euler-Poisson system in a concentric cylinder are studied. We first establish the existence of cylindrically symmetric smooth subsonic and transonic flows to steady Euler-Poisson system in a concentric cylinder. On one hand, we investigate the structural stability of smooth cylindrically symmetric subsonic flows under three-dimensional perturbations on the inner and outer cylinders. On the other hand, the structural stability of smooth transonic flows under the axi-symmetric perturbations are examined. There is no any restrictions on the background subsonic and transonic solutions. A deformation-curl-Poisson decomposition to the steady Euler-Poisson system is utilized in our work to deal with the hyperbolic-elliptic mixed structure in subsonic region. It should be emphasized that there is a special structure of the steady Euler-Poisson system which yields a priori estimates and uniqueness of a second order elliptic system for the velocity potential and the electrostatic potential.
\end{abstract}

\begin{center}
\begin{minipage}{5.5in}
Mathematics Subject Classifications 2020: 35L65, 35M10, 76N10, 76N15.\\
Key words: Euler-Poisson system, structural stability, transonic flows, subsonic flows, deformation-curl-Poisson decomposition.
\end{minipage}
\end{center}
\section{Introduction and main results} \noindent

This paper concerns the steady inviscid compressible Euler-Poisson flows in a concentric cylinder $\Omega=\{(x_1, x_2, x_3): r_0< r=\sqrt{x_1^2+x_2^2}< r_1, -1< x_3< 1\}$. The flow is governed by the following equations
\be\label{EPS}
\begin{cases}
\p_{x_1}(\rho u_1)+\p_{x_2}(\rho u_2)+\p_{x_3}(\rho u_3)=0,\\
\p_{x_1}(\rho u_1^2)+\p_{x_2}(\rho u_1u_2)+\p_{x_3}(\rho u_1u_3)+\p_{x_1}P=\rho\cdot\p_{x_1}\Phi,\\
\p_{x_1}(\rho u_1u_2)+\p_{x_2}(\rho u_2^2)+\p_{x_3}(\rho u_2u_3)+\p_{x_2}P=\rho\cdot\p_{x_2}\Phi,\\
\p_{x_1}(\rho u_1u_3)+\p_{x_2}(\rho u_2u_3)+\p_{x_3}(\rho u_3^2)+\p_{x_3}P=\rho\cdot\p_{x_3}\Phi,\\
\sum\limits_{j=1}^3\p_{x_j}(\rho(\frac1 2|\u|^2+e)\u+P\u)=\rho\u\cdot\nabla\Phi,\\
\Delta\Phi=\rho-b(x),
\end{cases}
\ee
where $\rho$, $\u$, $P$, $e$ stand for the density, velocity, pressure and the internal energy respectively. $\Phi$ represents the eletrostatic potential and $E=\nabla\Phi$ is the electric field, which is generated by the Coulomb force of particles, and $b>0$ denotes the density of fixed, positively charged background ions. In addition, we introduce the equation of state
\be
P=P(\rho, e)=(\ga-1)\rho e=A(s)\rho^{\ga},
\ee
where $\ga>1$ is called the $adiabatic\,\, constant$,
and $A(S)=ae^S$, a is positive constant. In addition, the Bernoulli's function is given by
\be\label{B}
B=\frac1 2|{\bf u}|^2+\frac{\ga}{\ga-1}\frac{P}{\rho}.\no
\ee

As one of the essential equations in modeling physical flows, the problems for Euler-Poisson equations have been extensively studied, among which the significant progress on the transonic shock solutions of Euler-Poisson system has been made by Luo and Xin in \cite{LuoXin}. They gave a thorough study and presented a precise classification of the one-dimensional transonic shock solutions for both cases when $b$ is in supersonic and subsonic regions. The existence, non-existence, uniqueness, and non-uniqueness of solutions with transonic shocks are obtained according to the different cases of boundary data and physical interval length. And in terms of the solutions obtained in \cite{LuoXin}, the authors in \cite{Xie2011} investigated the stability, including structural and dynamical stabilities, provided the background electric field is not too negative at the shock location.

The structural stability of subsonic flows for the Euler-Poisson system has also motivated many researches in recent years. In \cite{Weng6}, the author proved the uniquely existence of three dimensional subsonic flows with nonzero vorticity in a rectangular nozzle under some smallness assumptions on the background subsonic state. In \cite{Bae1}, the existence and stability of subsonic flows for the steady Euler-Poisson system were established for potential flows in multidimensional nozzles by finding a coercive estimate for the linearized second order elliptic system. Later on, they extended this result to the case of two dimensional flow with nonzero vorticity in \cite{Bae2} through a two dimensional Helmholtz decomposition $\u=\nabla\phi+\nabla^{\bot}\psi$, and they also investigated the unique existence and stability of two-dimensional subsonic flows with self-gravitiation in \cite{Bae3}. In \cite{BaeWeng}, Bae and Weng found a Helmholtz decomposition for three dimensional axisymmetric vector field with the form
$$\u=\nabla\phi+\curl{\bf V}\quad\mbox{with} \quad{\bf V}=h{\bf e}_r+\psi{\bf e}_{\th},$$
where $\phi, h, \psi$ are functions of $(x, r)$ for $r=\sqrt{x_2^2+x_3^2}$. With this decomposition and careful decomposition of the boundary conditions, an axisymmetric subsonic flow with nonzero vorticity to the steady Euler-Poisson system was established. Different from this decomposition, Weng constructed a deformation-curl-Poisson decomposition for the three dimensional steady Euler-Poisson system in \cite{Weng4}, so as to handle the hyperbolic-elliptic coupled structure in subsonic region. The main issue of this decomposition is to represent the density as a function of the Bernoulli's function, the entropy, the electrostatic potential and the speed by using the the Bernoulli's law, and the continuity equation is rewritten as a restriction equation on the deformation matrix. This together with the curl system and the Poisson system form a deformation-curl-Poisson system which is elliptic in subsonic region in the sense of Agmon, Dougalis and Nirenberg \cite{Nirenberg1}. Based on this decomposition, the author in \cite{Weng4} further proved the structural stability of one dimensional subsonic flows under some three-dimensional perturbations on the entrance and exit of a rectangular cylinder.

With regard to the studies for structural stability of supersonic flows, Bae, Duan, Xie in \cite{Bae4} established the structural stability for supersonic solutions of the Euler-Poisson system for hydrodynamical model in semiconductor devices and plasmas in two-dimensional domain . Bae and Park in \cite{BaeYang2} proved the unique existence of supersonic solutions of the Euler-Poisson system for potential flow in a three-dimensional rectangular cylinder when prescribing the velocity and the strength of electric field at the entrance. In \cite{BaeYang1}, the authors constructed a family of radial transonic shock solutions for Euler-Poisson system in a two dimensional convergent nozzle, and studied various analytical features especially including the monotonicity property of the pressure at the exit with respect to shock location.
The Euler-Poisson system with relaxation effect is also surveyed. When the doping profile lies in the subsonic and supersonic regions respectively, the authors in \cite{LMZ1, LMZ2} considered different flow patterns for one dimensional steady Euler-Poisson system with relaxation effect supplemented with sonic boundary condition.

Wang and Xin proved the existence and uniqueness of the smooth transonic flow of Meyer type in de Laval nozzles in \cite{WX1, WX4, WX2, WX3}, and all sonic points are exceptional and characteristically degenerate. They studied the subsonic-sonic flows by using the Chaplygin equations in the plane of the velocity potential and the stream function as well as the comparison principles. Recently, Weng, Xin and Yuan studied cylindrical symmetric smooth transonic spiral flows by considering one side boundary value problem to the steady Euler system in a concentric cylinder in \cite{Weng1}. They found that, in the presence of the angular velocity, many new flow patterns will appear. Based on the cylindrical symmetric background transonic solutions established in \cite{Weng1}, they further investigated the structural stability of these special transonic flows in \cite{Weng2}, where the sonic points are all nonexceptional and noncharacteristic. They established the existence and uniqueness of smooth cylindrically symmetric transonic flows in a concentric cylinder under the cylindrical and axi-symmetric perturbations. To deal with the hyperbolicity in subsonic regions, they employed the deformation-curl decomposition for the steady Euler system introduced in \cite{Weng3} to prove the existence and uniqueness of smooth transonic rotational flows. More recently, Weng and Xin  in \cite{Weng7} established the existence and stability of cylindrical transonic shock solutions under three dimensional perturbations of the incoming flows and the exit pressure without any further restrictions on the background transonic shock solutions, where the deformation-curl decomposition to the steady Euler system plays a key role.

Let us formulate the problem to be considered. Introduce the cylindrical coordinates $(r, \th, x_3)$
$$
r=\sqrt{x_1^2+x_2^2}, \quad \th=\arctan\frac{x_2}{x_1}, \quad x_3=x_3,
$$
and the domain $\Omega$ concerned in this paper can be rewritten as
\be
\Omega=\{(r,\th,x_3):r_0<r<r_1,\th\in\mathbb{T}_{2\pi},-1<x_3<1\}.
\ee
The boundary $\partial\Omega$ is naturally divided into the inner cylinder part $\Gamma^{0}$, the outer cylinder part $\Gamma^{1}$, and the impermeable wall $\Gamma^w$, more precisely,
\begin{equation*}
	\begin{aligned}
		\Gamma^{0}&=\{(r,\th,x_3):r=r_0,\th\in\mathbb{T}_{2\pi},-1<x_3<1\},\\
		\Gamma^{1}&=\{(r,\th,x_3):r=r_1,\th\in\mathbb{T}_{2\pi},-1<x_3<1\},\\
		\Gamma^{w}&=
		\{(r,\th,x_3):r_0<r<r_1,\th\in\mathbb{T}_{2\pi},x_3=\pm 1\}.
	\end{aligned}
\end{equation*}
In the cylindrical coordinate, the velocity field can be represented as ${\bf u}=U_1 \mathbf{e}_r+U_2 \mathbf{e}_{\th}+U_3 \mathbf{e}_3$, where
\begin{equation*}
\mathbf{e}_r=\begin{pmatrix}
\cos\th\\ \sin\th \\0
\end{pmatrix},
\mathbf{e}_{\th}=\begin{pmatrix}
-\sin\th\\ \cos\th \\0
\end{pmatrix},
\mathbf{e}_{3}=\begin{pmatrix}
0\\ 0 \\1
\end{pmatrix}.
\end{equation*}
Then the steady compressible Euler-Poisson equations \eqref{EPS} in cylindrical coordinates read
\begin{eqnarray}\label{ProblemImm}
\begin{cases}
\partial_r(\rho U_1)+\frac{1}{r}\partial_{\theta}(\rho U_2)+\frac{1}{r} \rho U_1+\p_{x_3}(\rho U_3)=0,\\
\rho\left(U_1\partial_r +\frac{U_2}{r}\partial_{\theta}+U_3\p_{x_3}\right)U_1+\partial_r P-\frac{\rho U_2^2}{r}=\rho\p_r\Phi,\\
\rho\left(U_1\partial_r +\frac{U_2}{r}\partial_{\theta}+U_3\p_{x_3}\right)U_2+\frac{1}{r}\partial_{\theta}P+\frac{\rho U_1U_2}{r}=\rho\frac1 r\p_{\th}\Phi,\\
\rho\left(U_1\partial_r +\frac{U_2}{r}\partial_{\theta}+U_3\p_{x_3}\right)U_3+\p_{x_3}P=\rho\p_{x_3}\Phi,\\
\rho\left(U_1\partial_r +\frac{U_2}{r}\partial_{\theta}+U_3\p_{x_3}\right)A(S)=0,\\
\rho\left(U_1\partial_r +\frac{U_2}{r}\partial_{\theta}+U_3\p_{x_3}\right)K=0,\\
\left(\p_r^2+\frac 1 r\p_r+\frac{1}{r^2}\p_{\th}^2+\p_{x_3}^2\right)\Phi=\rho-b,
\end{cases}
\end{eqnarray}
where we define the generalized Bernoulli function $K(r,\theta,x_3)$ as
\be\no
K:=\frac{1}{2}|{\bf u}|^2+\frac{\gamma}{\gamma-1}\frac{P}{\rho}-\Phi.
\ee

Motivated by \cite{Weng1, Weng2}, we are interested in cylindrical symmetric smooth solutions to \eqref{ProblemImm}. However, due to the effect of the electric field, the solution behavior of \eqref{1d_EP} is much more complicated than the steady Euler case, a complete classification of all solutions with(without) shocks seems to be quite difficult. Here we first consider the following two simple cases.

Fix $b$ to be a constant $b_0>0$, we construct the background flow of the form $\u(x)=\bar U_1(r)\mathbf e_r+\bar U_2(r)\mathbf e_{\th}$, $\rho(x)=\bar\rho(r)$, and $A(x)=\bar A(r)$, $\Phi(x)=\bar \Phi(r)$ in $\Omega$, solving the following system
\begin{equation}\label{1d_EP}
	\begin{cases}
		\p_r(r \bar \rho  \bar U_1)=0,\\
     	 \bar U_1 \p_r\bar U_1+\frac{1}{\bar\rho} \p_r\bar P-\frac{1}{r} \bar U_2^2= \bar E,\\
    	 \bar U_1 \p_r\bar U_2+\frac{1}{r} \bar U_1\bar U_2=0,\\
     	 \bar U_1 \p_r\bar A=0,\\
    	 \p_r\bar E+\frac{1}{r}\bar E=\bar \rho-b_0,
	\end{cases}
\end{equation}
where $\bar E(r)=\bar\Phi'(r)$.

We prescribe the initial data at the entrance $r=r_0$ as
\be\label{bdc0}
\bar\rho(r_0)=\rho_0>0,\,
\bar U_{1}(r_0)=U_{10}>0,\,
\bar U_{2}(r_0)=U_{20},\, \bar A(r_0)=A_0>0,\,
\bar E(r_0)=E_0>0.
\ee
Define $m_1=r_0\rho_0U_{10}$ and $m_2=r_0U_{20}$, one then has
\be\no
\bar\rho\bar U_1=\frac{m_1}{r},\,\bar U_2=\frac{m_2}{r},\,
\bar A \equiv A_0,
\ee
The system \eqref{1d_EP} can be further reduced to the following equations for $(\bar\rho,\bar E)$
\begin{equation}\label{15}
	\begin{cases}
		\bar\rho'(r)=\bar\rho\cdot\frac{\bar U_1^2+\bar U_2^2+ \bar E r}{r(\bar c^2-\bar U_1^2)},\\
		(r\bar E)'(r)=r(\bar\rho-\bar b).
	\end{cases}
\end{equation}

We have the first proposition involving background subsonic flows.
\begin{proposition}\label{prop1}
Assume that $\rho_0>b_0$. Given constants $\rho_0>0, U_{10}>0, A_0>0, E_0>0$ satisfying $\ga A_0\rho_0^{\ga-1}>U_{10}^2+U_{20}^2$, then the initial value problem \eqref{1d_EP} and \eqref{bdc0} has a unique smooth subsonic solution $(\bar U_{1s}, \bar U_{2s}, \bar\rho_s, \bar A_{s}, \bar E_s)$ satisfying $|\bar\M(r)|^2<1$.
\end{proposition}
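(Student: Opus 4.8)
The plan is to read the initial value problem \eqref{1d_EP}--\eqref{bdc0} through the reduction already carried out, i.e.\ as the system \eqref{15} for the pair $(\bar\rho,\bar E)$ — with $\bar U_1=m_1/(r\bar\rho)$, $\bar U_2=m_2/r$, $\bar A\equiv A_0$, and the sound speed $\bar c^2=\gamma A_0\bar\rho^{\gamma-1}$ recovered afterwards — and to combine the Cauchy--Lipschitz theorem with an \emph{a priori} estimate that propagates subsonicity in $r$. For local solvability, note that at $r=r_0$ the assumption $\gamma A_0\rho_0^{\gamma-1}>U_{10}^2+U_{20}^2$ forces $\bar c^2(r_0)-\bar U_1^2(r_0)=\gamma A_0\rho_0^{\gamma-1}-U_{10}^2>0$, while $\rho_0>0$, so the right-hand side of \eqref{15} is real-analytic — in particular locally Lipschitz — in $(\bar\rho,\bar E)$ on a neighbourhood of $(r_0,\rho_0,E_0)$ on which $\bar\rho$ and $\bar c^2-\bar U_1^2$ stay positive. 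Picard--Lindel\"of then yields a unique maximal $C^\infty$ solution on some $[r_0,R)$, subsonic on an initial subinterval since $|\bar\M(r_0)|^2=(U_{10}^2+U_{20}^2)/(\gamma A_0\rho_0^{\gamma-1})<1$.

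The core of the argument is the \emph{a priori} estimate, obtained by a bootstrap driven by the two sign hypotheses. On the largest subinterval of $[r_0,R)$ on which the flow is subsonic: since $\bar\rho(r_0)=\rho_0>b_0$, the second equation of \eqref{15} gives $(r\bar E)'=r(\bar\rho-b_0)>0$ whenever $\bar\rho>b_0$, hence $r\bar E$ increases and $\bar E(r)\ge r_0E_0/r>0$; then, being in the subsonic region where $\bar c^2-\bar U_1^2>0$, the first equation of \eqref{15} gives $\bar\rho'>0$, so $\bar\rho>\rho_0>b_0$, and a standard open/closed continuity argument closes the loop: $\bar\rho$ is strictly increasing and $\bar E>0$ throughout. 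Consequently $\bar U_1=m_1/(r\bar\rho)$ and $\bar U_2=m_2/r$ both decrease while $\bar c^2=\gamma A_0\bar\rho^{\gamma-1}$ increases, so $|\bar\M(r)|^2=(\bar U_1^2+\bar U_2^2)/\bar c^2$ is strictly decreasing; in particular $|\bar\M(r)|^2\le|\bar\M(r_0)|^2<1$, and $\bar c^2-\bar U_1^2\ge(1-|\bar\M(r_0)|^2)\gamma A_0\rho_0^{\gamma-1}>0$ with a uniform gap, so \eqref{15} stays uniformly non-degenerate and subsonicity is preserved.

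It remains to exclude a finite-radius blow-up of $(\bar\rho,\bar E)$, after which the continuation principle for ODEs forces $R>r_1$, so that the solution — then unique, smooth, and subsonic — exists on all of $[r_0,r_1]$; uniqueness on $[r_0,r_1]$ then follows from local uniqueness by the usual connectedness argument. For the bound, the uniform ellipticity just obtained turns the first equation of \eqref{15} into $(\bar\rho^{\gamma-1})'\lesssim 1+\bar E$, while the second reads $(r\bar E)'=r(\bar\rho-b_0)\le r\bar\rho$, and a Gr\"onwall argument on the compact interval $[r_0,r_1]$ then closes the estimate (the differential Bernoulli relation $\frac{d}{dr}\big(\tfrac12(\bar U_1^2+\bar U_2^2)+\tfrac{\gamma A_0}{\gamma-1}\bar\rho^{\gamma-1}\big)=\bar E$, a consequence of \eqref{1d_EP}, is also available here). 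Closing this last step is the part I expect to demand the most care, since it must control the feedback between $\bar\rho$ and $\bar E$: it is immediate when $\gamma\ge2$ (then the exponent $1/(\gamma-1)\le1$, so $\bar\rho\lesssim 1+\bar\rho^{\gamma-1}$ and the inequalities close linearly), and for general $\gamma>1$ it reduces to checking that $(\bar\rho,\bar E)$ cannot develop a singularity before $r=r_1$. Throughout, the two sign conditions $\rho_0>b_0$ and $E_0>0$ are precisely what make the monotonicity — hence the whole chain of estimates — run.
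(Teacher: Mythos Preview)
Your proposal is correct and follows essentially the same strategy as the paper: both bootstrap the implications $\bar\rho>b_0\Rightarrow (r\bar E)'>0\Rightarrow\bar E>0$ and (in the subsonic region) $\bar E>0\Rightarrow\bar\rho'>0\Rightarrow\bar\rho>\rho_0>b_0$ to show that $|\bar\M|^2$ is decreasing and hence stays below $1$, the only cosmetic difference being that the paper reads this off explicit formulas for $\tfrac{d}{dr}\bar M_i^2$ while you argue directly from the monotonicity of $\bar U_1,\bar U_2,\bar c$. You are in fact more careful than the paper on one point --- the continuation/non-blow-up step via Gr\"onwall --- which the paper's proof simply asserts without justification.
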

\begin{proof}

Denote the Mach numbers by
\be\no
\bar M_1^2(r)=\frac{\bar U_1^2(r)}{\bar c^2(\rho)},\,
\bar M_2^2(r)=\frac{\bar U_2^2(r)}{\bar c^2(\rho)},\,
\bar \M(r)=(\bar M_1(r), \bar M_2(r))^t.
\ee
A direct computation gives
\be\label{daoshu1}\begin{split}
&\frac{d}{d r}\bar M_1^2(r)=-\frac{\bar M_1^2}{r(1-\bar M_1^2)}
\left(
(\gamma-1)\bar M_1^2+(\gamma+1)\bar M_2^2+(\gamma+1)\frac{\bar E r}{\bar c^2}+2
\right),\\
&\frac{d}{d r}\bar M_2^2(r)=-\frac{\bar M_2^2}{r(1-\bar M_1^2)}
\left(
(\gamma-3)\bar M_1^2+(\gamma-1)\bar M_2^2+(\gamma-1)\frac{\bar E r}{\bar c^2}+2
\right),\no\\
&\frac{d}{d r}|\bar{\bf M}|^2(r)=-\frac{|\bar{\bf M}|^2}{r(1-\bar M_1^2)}
\left(
(\gamma-1)|\bar{\bf M}|^2+2
\right)-
\frac{ (\gamma+1)\bar M_1^2+(\gamma-1)\bar M_2^2}{\bar c^2(1-\bar M_1^2)}\bar E.\no
\end{split} \ee
Since
$\gamma A_0\rho_0^{\gamma-1}>U_{10}^2+U_{20}^2$ and
$\rho_0>b_0$, the existence and uniqueness of local solution to \eqref{15} is guaranteed by the standard theory. Then for some constant $\mu_0>0$, one has
$$\bar\rho'(r)>0,\,\quad\mbox{and}\quad \bar\rho(r)>b_0, \,\, \forall \,r\in[r_0, r_0+\mu_0],$$
which implies
$$(r\bar E)'=r(\bar\rho-b_0)>0,\,\mbox{and}\,\,r\bar E>0,\quad \forall\, r\in[r_0,r_0+\mu_0].$$
This tells that $\frac{d}{dr}(|\bar\M|^2)(r)<0, \forall r\in[r_0, r_0+\mu_0]$. Hence, $|\bar{\bf M}|^2(r_0+\mu_0)<1$, one can extend the solution of \eqref{15} to the whole interval $[r_0, r_1]$ and $0<|\bar{\bf M}|^2(r)<1$ for $\forall r\in[r_0, r_1]$. As a consequence, one obtains a unique smooth radially symmetric subsonic solution
$(\bar\rho_s,\bar U_{1s},\bar U_{2s}, \bar A_s,\bar E_s)$ with nonzero angular velocity on $[r_0,r_1]$.\end{proof}

Without loss generality, one may assume $\bar\Phi(r_0)=0$ and recover $\bar\Phi(r)$ by $\bar E(r)$ as
\be\label{18}
\bar\Phi(r)=\int_{r_0}^{r} \bar E(\tau) d\tau.
\ee
Set $\bar m:=\bar\rho\bar U_1$, and $\bar P(r)=A_0\bar\rho^{\ga}(r)$, we call $(\bar \rho_s, \bar U_{1s}, \bar U_{2s}, \bar A_{s}, \bar \Phi_s)$ the background subsonic solution. Next, the background transonic solution denoted by $(\bar \rho_t, \bar U_{1t}, \bar U_{2t}, \bar A_{t}, \bar \Phi_t)$ is established in the following proposition.
\begin{proposition}
 Assume that $\rho_0>b_0, A_0>0, E_0>0$. Then if the incoming flow is supersonic, but subsonic in the $r$-direction, i.e., $U_{10}^2<\ga A_0\rho_0^{\ga-1}<U_{10}^2+U_{20}^2$, there exists a unique smooth transonic flow $(\bar\rho_t(r), \bar U_{1t}(r), \bar U_{2t}(r), \bar A_{t}, \bar E_t(r))$ on $[r_0, r_1]$, if $r_1$ is large enough.
\end{proposition}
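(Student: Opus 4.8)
The plan is to treat the boundary value problem \eqref{1d_EP}--\eqref{bdc0} as the initial value problem for the ODE system \eqref{15} in the radial variable $r$, and to follow the solution as $r$ increases; once $(\bar\rho,\bar E)$ is known on an interval, $\bar U_1,\bar U_2,\bar A,\bar\Phi$ are recovered from $\bar\rho\bar U_1=m_1/r$, $\bar U_2=m_2/r$, $\bar A\equiv A_0$ and $\bar\Phi(r)=\int_{r_0}^r\bar E$. The hypothesis $U_{10}^2<\ga A_0\rho_0^{\ga-1}<U_{10}^2+U_{20}^2$ says precisely that at $r=r_0$ the flow is supersonic, $|\bar\M|^2(r_0)>1$, but subsonic in the $r$-direction, $\bar M_1^2(r_0)<1$, so $\bar c^2(r_0)-\bar U_1^2(r_0)>0$ and \eqref{15} is nonsingular near $r_0$. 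First I would apply the standard ODE theory to get a unique smooth solution $(\bar\rho_t,\bar E_t)$ of \eqref{15} on a maximal interval $[r_0,R^*)$ with $\bar\rho>0$ and $\bar c^2-\bar U_1^2>0$.

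Next I would establish, by a continuation argument, the sign structure of the solution on $[r_0,R^*)$. On the largest subinterval where $\bar\rho>b_0$, $\bar E>0$ and $\bar M_1^2<1$ hold (nonempty since all three are strict at $r_0$), the numerator $\bar U_1^2+\bar U_2^2+\bar E r$ and the denominator $r(\bar c^2-\bar U_1^2)$ in \eqref{15} are positive, so $\bar\rho'>0$; hence $\bar\rho$ is strictly increasing and stays above $\rho_0>b_0$, then $(r\bar E)'=r(\bar\rho-b_0)>0$ gives $r\bar E\ge r_0E_0>0$ and $\bar E>0$, and since $\bar M_1^2=\frac{m_1^2}{\ga A_0}\,r^{-2}\bar\rho^{-(\ga+1)}$ with $r$ and $\bar\rho$ increasing, $\bar M_1^2$ is strictly decreasing and stays below $\bar M_1^2(r_0)<1$. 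Thus none of the three can be the first to fail, so all three hold on $[r_0,R^*)$; in particular $\bar c^2-\bar U_1^2\ge \ga A_0\rho_0^{\ga-1}\big(1-\bar M_1^2(r_0)\big)>0$, the ODE never meets its singular set, and $R^*$ is finite only if $\bar\rho$ or $\bar E$ escapes to infinity.

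The core step is to show $|\bar\M|^2$ decreases through $1$ while the solution is still smooth. From the expression for $\frac{d}{dr}|\bar\M|^2$ computed in the proof of Proposition \ref{prop1}, both summands are strictly negative on $[r_0,R^*)$ (since $\bar M_1^2<1$, $\bar E>0$), so $|\bar\M|^2$ is strictly decreasing; moreover, while $|\bar\M|^2\ge1$ we have $0<1-\bar M_1^2\le1$ and $|\bar\M|^2\big((\ga-1)|\bar\M|^2+2\big)\ge\ga+1$, hence $\frac{d}{dr}|\bar\M|^2\le-\frac{\ga+1}{r}$. Integrating, $|\bar\M|^2(r)\le|\bar\M|^2(r_0)-(\ga+1)\ln(r/r_0)$ as long as $|\bar\M|^2\ge1$, which forces $|\bar\M|^2$ down to $1$ at some finite $r^*\le r_0\exp\big(\frac{|\bar\M|^2(r_0)-1}{\ga+1}\big)$ --- \emph{provided the solution survives up to} $r^*$. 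This is the main obstacle I anticipate: the Poisson coupling $\Delta\Phi=\rho-b$ could a priori let $\bar\rho$ or $\bar E$ blow up before $r^*$. I would rule this out by exploiting the supersonic region: where $|\bar\M|^2\ge1$ one has $\ga A_0\bar\rho^{\ga-1}=\bar c^2\le\bar U_1^2+\bar U_2^2$, and since $\bar\rho$ and $r$ are increasing, $\bar U_1=m_1/(r\bar\rho)\le U_{10}$ and $|\bar U_2|=|m_2|/r\le|U_{20}|$, so $\bar\rho\le\big(\frac{U_{10}^2+U_{20}^2}{\ga A_0}\big)^{1/(\ga-1)}$ is bounded on $[r_0,\min\{r^*,R^*\})$; then $(r\bar E)'=r(\bar\rho-b_0)$ bounds $\bar E$ on $[r_0,r^*]$. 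Hence no blow-up occurs before $r^*$, so $R^*>r^*$ and $|\bar\M|^2(r^*)=1$. (It is also useful to record the Bernoulli relation $\frac12(\bar U_1^2+\bar U_2^2)+\frac{\ga}{\ga-1}A_0\bar\rho^{\ga-1}-\bar\Phi\equiv K_0$, which follows from \eqref{1d_EP} and is equivalent to $|\bar\M|^2=\frac{2(K_0+\bar\Phi)}{\bar c^2}-\frac{2}{\ga-1}$; this expresses $\bar\rho$ through $\bar\Phi$ and is convenient for the behaviour past the sonic point.)

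Finally, at $r^*$ we still have $\bar M_1^2<1$ and $\bar c^2-\bar U_1^2=\bar U_2^2>0$, so \eqref{15} remains regular at the sonic point and the solution extends smoothly into $(r^*,R^*)$; there $\frac{d}{dr}|\bar\M|^2<0$ again, so $|\bar\M|^2<1$ beyond $r^*$, and since the decrease is transversal, $r^*$ is the unique sonic point. Taking any $r_1$ with $r^*<r_1<R^*$ --- this is the content of ``$r_1$ large enough'' --- the restriction of $(\bar\rho_t,\bar U_{1t},\bar U_{2t},\bar A_t,\bar E_t)$ to $[r_0,r_1]$ is a smooth transonic flow on $[r_0,r_1]$: subsonic in the $r$-direction throughout, supersonic on $[r_0,r^*)$, sonic at $r^*$, and subsonic on $(r^*,r_1]$. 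Uniqueness on $[r_0,r_1]$ follows from the uniqueness of the solution of the ODE initial value problem \eqref{15}--\eqref{bdc0}.
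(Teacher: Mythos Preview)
Your argument is correct and follows essentially the same strategy as the paper: analyse the reduced ODE \eqref{15}, propagate the inequalities $\bar\rho>b_0$, $\bar E>0$, $\bar M_1^2<1$ by continuation, and use the formula for $\frac{d}{dr}|\bar\M|^2$ from the proof of Proposition~\ref{prop1} to show $|\bar\M|^2$ is strictly decreasing and must cross $1$.

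The differences are in the details. For the decay of $|\bar\M|^2$ the paper keeps the factor $|\bar\M|^2$ instead of bounding it below, obtaining the differential inequality $\frac{d}{dr}|\bar\M|^2\le -\frac{2|\bar\M|^2}{r}$, hence the Gronwall-type bound $|\bar\M|^2(r)\le r_0^2|\bar\M|^2(r_0)/r^2$, valid on the whole existence interval (not just where $|\bar\M|^2\ge1$); this is cleaner than your logarithmic estimate and gives an explicit radius $r_c\le r_0\sqrt{|\bar\M|^2(r_0)}$ for the sonic point. Conversely, you are more careful than the paper on two points the paper glosses over: you explicitly check that the ODE \eqref{15} is nonsingular at the sonic point (since $\bar c^2-\bar U_1^2=\bar U_2^2>0$ there), and you rule out blow-up of $(\bar\rho,\bar E)$ before $r^*$ by bounding $\bar\rho$ in the supersonic region via $\bar c^2\le\bar U_1^2+\bar U_2^2\le U_{10}^2+U_{20}^2$. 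The paper simply asserts that the solution extends to $[r_0,+\infty)$ and that the estimates continue to hold, and then concludes ``for $\forall\,r_1>r_c$''; your version only produces $r_1\in(r^*,R^*)$ with $R^*$ possibly finite, which is slightly weaker but matches one reading of ``$r_1$ large enough''.
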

\begin{proof}
In view of \eqref{15}, one can deduce that for some constant $\mu_0>0$,
$$
\bar\rho'(r)>0, \quad\mbox{and}\quad \bar\rho(r)>b_0, \quad\forall r\in[r_0, r_0+\mu_0],
$$
which further indicated that
$$
(r\bar E(r))'>0, \quad\forall r\in[r_0, r_0+\mu_0],
$$
that is to say,
$$r\bar E(r)\geq r_0E_0>0,\quad\forall r\in[r_0, r_0+\mu_0].$$
Hence, \eqref{daoshu1} implies
\begin{align*}
&\frac{d}{dr}\bar M_1^2(r)<0, \quad \bar M_1^2(r)\leq\bar M_{10}^2=\frac{U_{10}^2}{c^2(\rho_0)}<1,\quad\forall r\in[r_0, r_0+\mu_0],\\
&\frac{d}{dr}|\bar\M|^2(r)\leq-\frac{2(\bar M_1^2(r)+\bar M_2^2(r))}{r(1-\bar M_1^2(r))}\leq-\frac{2(\bar M_1^2(r)+\bar M_2^2(r))}{r}, \quad\forall r\in[r_0, r_0+\mu_0].
\end{align*}

Then $|\bar\M|^2(r_0+\mu_0)<1$, one can extend the solution of \eqref{15} to $[r_0, +\infty)$, and the above inequalities still holds.

Denote $f(r)=\bar M_1^2(r)+\bar M_2^2(r)$, then we have $f(r)\leq\frac{r_0^2f(r_0)}{r^2}, \,\,\forall r\geq r_0$. Thus $(\bar M_1^2+\bar M_2^2)(r)\leq\frac{r_0^2(M_{10}^2+M_{20}^2)}{r^2}, \,\,\forall r\geq r_0$.

Therefore, there exists a unique $r_c\in(r_0, r_1)$ such that $(\bar M_1^2+\bar M_2^2)(r_c)=1$. And for $\forall r_1>r_c$, there exists a unique smooth transonic flow $(\bar\rho_t(r), \bar U_{1t}(r), \bar U_{2t}(r), \bar A_{t}, \bar E_t(r))$ on $[r_0, r_1]$.
\end{proof}

The purpose of this paper is not only to study the structural stability of smooth cylindrically symmetric subsonic flows with nonzero vorticity, but also the structural stability of axi-symmetric transonic flows for the steady Euler-Poisson system \eqref{EPS} in the concentric cylinder $\Omega$.

Overall, the paper is organized as follows. In Section \ref{sec1}, we recall a deformation-curl-Poisson decomposition for the steady Euler-Poisson system. In Section \ref{sec2}, we focus on investigating the structural stability of smooth cylindrical subsonic flows with nonzero vorticity under three dimensional perturbations on the inner and outer cylinder. The key ingredient is a special structure to steady Euler-Poisson equation which guarantees the existence and uniqueness of a second order elliptic system for the velocity potential and the electrostatic potential. The final section devotes to establish the structural stability of smooth transonic spiral flows under suitable axi-symmetric perturbations.


\section{The deformation-curl-Poisson decomposition in cylindrical coordinates}\label{sec1}

To avoid the difficulties resulted from the elliptic-hyperbolic mixed structure of the steady Euler-Poisson system in subsonic regions, we follow the idea developed in \cite{Weng3, Weng4} to construct a deformation-curl-Poisson decomposition to steady Euler-Poisson system in cylindrical coordinates.

On one hand, we treat the hyperbolic mode by identifying that the generalized Bernoulli's quantity and the entropy satisfy the following transport equations
\begin{equation}\label{transport}
	\begin{cases}
		(\p_r+\frac{U_2}{rU_1}\p_{\theta}+\frac{U_3}{U_1}\p_{x_3})K=0,\\
		(\p_r+\frac{U_2}{rU_1}\p_{\theta}+\frac{U_3}{U_1}\p_{x_3})A=0.
	\end{cases}
\end{equation}
In cylindrical coordinates, $\curl {\bf u}=\bm\omega(r,\theta,x_3)=\omega_1{\bf e}_r
+ \omega_2{\bf e}_{\theta}
+ \omega_3{\bf e}_{x_3}$,
where
\begin{equation}\label{xuandu}
	\omega_1=\frac{1}{r}\p_{\theta}U_3-\p_{x_3}U_2,\,\,
	\omega_2=\p_{x_3}U_1-\p_{x_r}U_3,\,\,
	\omega_3=\p_{r}U_2-\frac{1}{r}\p_{\theta}U_1+\frac{U_2}{r}.
\end{equation}
It follows from the equations of momentum conservation that
\begin{equation*}
	\begin{cases}
		U_1\omega_3-U_3\omega_1+\frac{1}{r}\p_{\theta}K
		-\frac{\rho^{\gamma-1}}{\gamma-1}\frac{1}{r}\p_{\theta}A=0,\\
		-U_1\omega_2+U_2\omega_1+\p_{x_3}K
		-\frac{\rho^{\gamma-1}}{\gamma-1}\p_{x_3}A=0.
	\end{cases}
\end{equation*}
Then $\omega_2, \omega_3$ are given by
\begin{equation}\label{xuandu2}
	\begin{cases}
		\omega_2=\frac{1}{U_1}(U_2\omega_1+\p_{x_3}K
		-\frac{\rho^{\gamma-1}}{\gamma-1}\p_{x_3}A),\\
		\omega_3=\frac{1}{U_1}(U_3\omega_1-\frac{1}{r}\p_{\theta}K
		+\frac{\rho^{\gamma-1}}{\gamma-1}\frac{1}{r}\p_{\theta}A).
	\end{cases}
\end{equation}
Plugging \eqref{xuandu2} into the following equation
$$\div \curl{\bf u}=\p_r\omega_1+\frac{1}{r}\p_{\theta}\omega_2+\p_{x_3}\omega_3
+\frac{\omega_1}{r}=0,$$
and performing direct calculations, one arrives at
\be\label{identify26} \begin{split}
	&\left(
	\p_r+\frac{U_2}{rU_1}\p_{\theta}+\frac{U_3}{U_1}\p_{x_3}
	\right)\omega_1
	+\left(
	\frac{1}{r}+\frac{1}{r}\p_{\theta}(\frac{U_2}{U_1})+\p_{x_3}(\frac{U_3}{U_1})
	\right)\omega_1\\
	&+\frac{1}{r}\p_{\theta}(\frac{1}{U_1})\p_{x_3}K
	-\p_{x_3}(\frac{1}{U_1})\frac{1}{r}\p_{\theta}K-\frac{1}{\gamma-1}\frac{1}{r}\p_{\theta}(\frac{\rho^{\gamma-1}}{U_1})\p_{x_3}A+\frac{1}{\gamma-1}\p_{x_3}(\frac{\rho^{\gamma-1}}{U_1})\frac{1}{r}\p_{\theta}A=0.
\end{split} \ee

On the other hand, the elliptic modes can be handled by invoking the Bernoulli' function to rewrite the density as the function of $A, K, \Phi, |\U|^2$:
\be\label{midu}
\rho=H(A, K, \Phi, |\U|^2)=\left(\frac{\ga-1}{\ga A}\right)^{\frac{1}{\ga-1}}\left(K+\Phi-\frac1 2|\U|^2\right)^{\frac{1}{\ga-1}}.
\ee
Substituting \eqref{midu} into the continuity equation yields
\be\label{zhu} \begin{split}
\p_r&\left(H(A, K, \Phi, |\U|^2)U_1\right)+\frac1 r\p_{\th}\left(H(A, K, \Phi, |\U|^2)U_2\right)\\
&+\p_{x_3}\left(H(A, K, \Phi, |\U|^2)U_3\right)+\frac1 rH(A, K, \Phi, |\U|^2)U_1=0,
\end{split} \ee
which can be rewritten as a Frobenious inner product of a symmetric matrix $\mathbb{M}$ with the deformation matrix
$$
\mathbb{M}({\bf K}, {\bf \Phi}, {\bf U}):\mathbb{D}({\bf U})+\frac{c^2(K, \Phi, {\bf U})}{r}{\bf U}_1=0,
$$
where $\mathbb{M}:\mathbb{D}=\sum\limits_{i, j=1}^3 m_{ij}d_{ij}$, $\forall \,\mathbb{M}=(m_{ij})_{i,j=1}^3, \mathbb{D}=(d_{ij})_{i,j=1}^3$, with
\begin{eqnarray}
	\mathbb{M}=
	\left(
	\begin{array}{ccc}
		c^2(K, \Phi, {\bf U})-U_1^2 & -U_1U_2 & -U_1U_3\\
		-U_1U_2 & c^2(K, \Phi, {\bf U})-U_2^2 & -U_2U_3\\
        -U_1U_3 & -U_2U_3 & c^2(K, \Phi, {\bf U})-U_3^2
	\end{array}
	\right),
\end{eqnarray}
\begin{eqnarray}
\mathbb{D}({\bf U})=
\left(
\begin{array}{ccc}
\p_rU_1 & \frac1 2(\p_rU_2+\frac1 r\p_{\th}U_1) & \frac1 2(\p_rU_3+\p_{x_3}U_1)\\
\frac1 2(\p_rU_2+\frac1 r\p_{\th}U_1) & \frac1 r \p_{\th}U_2 & \frac1 2(\frac1 r\p_{\th}U_3+\p_{x_3}U_2)\\
\frac1 2(\p_rU_3+\p_{x_3}U_1) & \frac1 2(\frac1 r\p_{\th}U_3+\p_{x_3}U_2) & \p_{x_3}U_3
\end{array}
\right).
\end{eqnarray}

Besides, the electrostatic potential satisfies
\begin{eqnarray}\label{potential}
	\Delta\Phi=H(A,K,\Phi,|U|^2)-b.
\end{eqnarray}

As a consequence, we obtain the following deformation-curl-Poisson system for the velocity field
\be\label{system}
\begin{cases}
\p_r\left(H(A, K, \Phi, |\U|^2)U_1\right)+\frac1 r\p_{\th}\left(H(A, K, \Phi, |\U|^2)U_2\right)\\
\quad\quad+\p_{x_3}\left(H(A, K, \Phi, |\U|^2)U_3\right)+\frac1 rH(A, K, \Phi, |\U|^2)U_1=0,\\
\frac{1}{r}\p_{\theta}U_3-\p_{x_3}U_2=\omega_1,\\
\p_{x_3}U_1-\p_{r}U_3=\omega_2,\\
\p_{r}U_2-\frac{1}{r}\p_{\theta}U_1+\frac{U_2}{r}=\omega_3,\\
\Delta\Phi=H(A,K,\Phi,|\U|^2)-b.
\end{cases}
\ee

Then one has the following Lemma.
\bl\label{equivalent}
{\bf (Equivalence)} Assume that $C^1$ smooth solutions $(\rho, \U, A, K, \Phi)$ to \eqref{EPS} defined on a domain $\Omega$ do not contain the vacuum and the radial velocity $U_1(r, x_3)>0$ in $\Omega$, then the steady Euler-Poisson equations \eqref{EPS} is equivalent to \eqref{transport}, \eqref{xuandu2}, \eqref{identify26}, \eqref{midu}, and \eqref{system}.
\el

\section{Structural stability of smooth cylindrical subsonic flows}\label{sec2}

This section is dedicated to establish the structural stability of the smooth cylindrical subsonic flows under the three-dimensional perturbations at the inner and outer cylinder by utilizing the deformation-curl-Poisson decomposition.

\subsection{Main Theorems}

We next examine the existence and uniqueness of smooth subsonic background solutions constructed in Proposition \ref{prop1} under two classes of boundary conditions.

Firstly, by setting
$$\bar m_{en}:=\bar\rho_s(r_0)\bar U_{1s}(r_0) \,\,\,\mbox{and}\,\,\, \bar m_{ex}:=\bar\rho_s(r_1)\bar U_{1s}(r_1),$$
we prescribe the boundary conditions on the inner cylinder $r=r_0$, consisting of the radical component of the momentum and the vorticity, the generalized Bernoulli' quantity, the entropy and the electric potential:
\begin{equation}\label{BC_r0}
	\begin{cases}
		(\rho U_1)(r_0,\theta,x_3)=\bar m_{en}+\epsilon m_{en}(\theta,x_3),\\
		\omega_1(r_0,\theta,x_3)=\epsilon \omega_{en}(\theta,x_3),\\
		K(r_0,\theta,x_3)=\bar K_{s}+\epsilon K_{en}(\theta,x_3),\\
		A(r_0,\theta,x_3)=\bar A_{s}+\epsilon A_{en}(\theta,x_3),\\
		\Phi(r_0,\theta,x_3)=\bar \Phi_{s}(r_0)+\epsilon \Phi_{en}(\theta,x_3),
	\end{cases}
\end{equation}
where $(m_{en}, K_{en}, A_{en}, \Phi_{en})\in C^{2, \alpha}(\Gamma^0)$, $\omega_{en}\in C^{1, \alpha}(\Gamma^0)$ with some $\alpha\in(0, 1)$. Furthermore, the compatibility conditions should be subjected as
\be\label{cp1}
\p_{x_3}(m_{en}, K_{en}, A_{en}, \Phi_{en})(\th, \pm1)=0,\quad \omega_{en}(\th,\pm 1)=0,\quad \forall \,\,\th\in[0,2\pi],
\ee
 And the conditions on the outer cylinder $r=r_1$ are imposed as
\begin{equation}\label{BC_r1}
	\begin{cases}
	(\rho U_1)(r_1,\theta,x_3)=\bar m_{ex}+\epsilon m_{ex}(\theta,x_3),\\
	\Phi(r_1,\theta,x_3)=\bar \Phi_s(r_1)+\epsilon \Phi_{ex}(\theta,x_3),
	\end{cases}
\end{equation}
here $(m_{ex},\,\Phi_{ex})\in (C^{2, \alpha}(\Gamma^1))^2$ and satisfies the compatibility conditions:
\be\label{cp2}
\p_{x_3}m_{ex}(\th,\pm 1)=\p_{x_3}\Phi_{ex}(\th,\pm 1)=0,
\ee
and
\be\label{cp3}
\int_{-1}^1\int_0^{2\pi} m_{ex}(\th,x_3)\,r_1
-m_{en}(\th,x_3)\,r_0\,d\th dx_3
=0.
\ee
In addition, the slip boundary conditions are given by
\begin{eqnarray}\label{BC_wall}
	U_3(r,\theta,\pm 1)=0,\quad \p_{x_3}\Phi(r,\theta,\pm 1)=0.
\end{eqnarray}

The ion density $b(x)$ is assumed to be a small perturbation of ion background density
\be
b(x)=b_0+\epsilon \tilde b(r,\th,x_3),
\ee
where $\tilde b(r,\th,x_3)\in C^\al(\Omega)$.

The first main result of our work is stated as follows.
\bt\label{theorem}
Given the boundary data
$(m_{en},K_{en},A_{en},\Phi_{en})\in C^{2,\al}(\Gamma^0)$,
$\omega_{en}\in C^{1,\al}(\Gamma^0)$,\\
$(m_{ex},\Phi_{ex})\in C^{2,\al}(\Gamma^1)$, and $\tilde b\in C^\al (\Omega)$ satisfying the compatibility conditions \eqref{cp1}, \eqref{cp2} and \eqref{cp3}, there exists a small constant $\epsilon_0>0$ that depends only on the background subsonic flow $(\bar\rho_s,\bar U_{1s},\bar U_{2s},\bar A_{s},\bar \Phi_s)$ and $(m_{en},\omega_{en},K_{en},A_{en},\Phi_{en},m_{ex},\Phi_{ex},\tilde b)$, such that for any $0<\epsilon<\epsilon_0$, there exists a smooth subsonic flow $(U_1,U_2,U_3,A,K,\Phi)\in (C^{2,\al}(\bar \Omega))^6$ to \eqref{ProblemImm} satisfying boundary conditions \eqref{BC_r0},\eqref{BC_r1} and \eqref{BC_wall}. Furthermore, the following estimate holds:
\begin{equation}
	\|(U_1,U_2,U_3,A,K,\Phi)-(\bar U_{1s},\bar U_{2s}, 0, \bar A_s, \bar K_s, \bar\Phi_s)\|_{C^{2,\al}(\bar \Omega)}\leq C_0\epsilon,
\end{equation}
where the positive constant $C_0$ only depends on the background subsonic flow, boundary data and $\al$.
\et

In fact, we can also prescribe the following second class of boundary conditions at the inner and outer cylinder as described in \cite{Weng4}. On the inner cylinder, we impose the generalized Bernoulli's quantity, the entropy, the electrostatic and the tangential velocity
\begin{equation}\label{BC_r00}
	\begin{cases}
		U_2(r_0,\theta,x_3)=\epsilon U_{2, en}(\theta,x_3),\\
		U_3(r_0,\theta,x_3)=\epsilon U_{3, en}(\theta,x_3),\\
		K(r_0,\theta,x_3)=\bar K_{s}+\epsilon K_{en}(\theta,x_3),\\
		A(r_0,\theta,x_3)=\bar A_{s}+\epsilon A_{en}(\theta,x_3),\\
		\Phi(r_0,\theta,x_3)=\bar\Phi_{s}(r_0)+\epsilon\Phi_{en}(\theta,x_3),
	\end{cases}
\end{equation}
where $(U_{2,en},U_{3,en},K_{en}, A_{en}, \Phi_{en})\in C^{2, \alpha}(\Gamma^0)$. And the following compatibility conditions also hold:
\be\label{cp111}
\begin{cases}
\p_{x_3}(U_{2, en},K_{en},A_{en},\Phi_{en})(\th, \pm1)=0,\quad \forall \,\,\th\in[0,2\pi],\\
U_{3, en}(\th, \pm 1)=\p_{x_3}^2U_{3, en}(\th, \pm1)=0.
\end{cases}
\ee
On the outer cylinder $r=r_1$, we subject the momentum and the electric potential as
\begin{equation}\label{BC_r11}
	\begin{cases}
	P(r_1,\theta,x_3)=\bar P(r_1)+\epsilon P_{ex}(\theta,x_3),\\
	\Phi(r_1,\theta,x_3)=\bar \Phi_{s}(r_1)+\epsilon \Phi_{ex}(\theta,x_3),
	\end{cases}
\end{equation}
here $(P_{ex}, \Phi_{ex})\in (C^{2, \alpha}(\Gamma^1))^2$ and satisfy the compatibility conditions:
\be\label{cp22}
\p_{x_3}P_{ex}(\th,\pm 1)=\p_{x_3}\Phi_{ex}(\th,\pm 1)=0.
\ee
Correspondingly, the existence and uniqueness result can be established as follows.
\bt\label{theoremm}
Given the boundary data
$(K_{en}, A_{en}, U_{2, en}, U_{3, en}, \Phi_{en})\in C^{2,\al}(\Gamma^0)$, $(P_{ex},\Phi_{ex})\in C^{2,\al}(\Gamma^1)$, and $\tilde b\in C^\al (\Omega)$ satisfying the compatibility conditions \eqref{cp111} and \eqref{cp22}, there exists a small constant $\epsilon_0>0$ which depends on the background subsonic flow $(\bar\rho_s, \bar U_{1s}, \bar U_{2s}, A_{0s}, \bar \Phi_s)$ and $(K_{en}, A_{en}, U_{2, en}, U_{3, en}, \Phi_{en}, P_{ex}, \Phi_{ex}, \tilde b)$, such that for any $0<\epsilon<\epsilon_0$, there exists a smooth subsonic flow $(U_1,U_2,U_3,A,K,\Phi)\in (C^{2,\al}(\bar \Omega))^6$ to \eqref{ProblemImm} satisfying boundary conditions \eqref{BC_r00},\eqref{BC_r11} and \eqref{BC_wall}. Furthermore, the following estimate holds:
\begin{equation}
	\|(U_1,U_2,U_3,A,K,\Phi)-(\bar U_{1s},\bar U_{2s}, 0, \bar A_s, \bar K_s, \bar \Phi_s)\|_{C^{2,\al}(\bar \Omega)}\leq C_0\epsilon,
\end{equation}
where the positive constant $C_0$ only depends on the background subsonic flow, boundary data and $\al$.
\et

\subsection{The proof of Theorem \ref{theorem}}\label{subsec1}

We begin by setting
\begin{eqnarray*}
	W_i=U_i-\bar U_{is},\,\,i=1,2,3,\quad W_4=A-\bar A_{s},\quad
	W_5=K-\bar K_{s},\quad W_6=\Phi-\bar\Phi_s,
\end{eqnarray*}
and defining the iteration space $\mathcal{S}_{\delta}$ as
\begin{equation}\label{itr-sp-sub}
	\begin{split}
			\mathcal{S}_{\delta}=\{
		{\bf W}=(W_1,W_2,W_3,W_4,W_5,W_6):\sum_{j=1}^{6}\|W_j\|_{C^{2,\al}(\bar\Omega)}\leq \delta\},
\end{split} \end{equation}
with the following compatibility conditions
\begin{equation}\label{subcomp}
		(W_3,\p_{x_3}^2W_3)(r,\th,\pm 1)=\p_{x_3}(W_1,W_2,W_4,W_5,W_6)(r,\th,\pm 1)=0,
\end{equation}
where $\delta$ is a suitably small positive constant to be determined later. It is worth noting that
\begin{equation*}
	\bar U_{3s}=0,\,\bar A_s=A_0,\,\bar K_s=K_0.
\end{equation*}
For ease of notation, we denote
$${\bf V}=(W_1,W_2,W_3),$$
which suggests
$${\bf U}={\bf V}+\bar{\bf U}_s.$$

We now propose to construct a new ${\bf W}\in\mathcal{S}_{\delta}$ by choosing any ${\bf W}^\sharp=(W_1^\sharp,\ldots, W_6^\sharp)\in\mathcal{S}_{\delta}$. In practice, we can proceed it through the following three steps.

{\bf Step 1.}\,
For the convenience of writing and reading, we first introduce the following notation
\begin{equation}
	\begin{aligned}
	R_1^{\sharp}:=R_1^{\sharp}(r,\theta,x_3)=
	\frac{1}{W_{1}^\sharp+\bar U_{1s}},\\
	R_2^{\sharp}:=R_2^{\sharp}(r,\theta,x_3)=\frac{W_{2}^\sharp+\bar U_{2s}}{W_{1}^\sharp+\bar U_{1s}},\\
	R_3^{\sharp}:=R_3^{\sharp}(r,\theta,x_3)=\frac{W_{3}^\sharp}{W_{1}^\sharp+\bar U_{1s}}.
	\end{aligned}
\end{equation}
Then, the equations of entropy function $A$ and generalized Bernoulli function $K$ are changed into
\begin{eqnarray}\label{eq_W4}
	\begin{cases}
(\p_r+R_2^\sharp\frac{1}{r}\p_{\theta}+R_3^\sharp\p_{x_3})
W_4=0,\\
W_4(r_0,\theta,x_3)=\epsilon A_{en}(\theta,x_3),
	\end{cases}
\end{eqnarray}
and
\begin{eqnarray}\label{eq_W5}
	\begin{cases}
(\p_r+R_2^\sharp\frac{1}{r}\p_{\theta}+R_3^\sharp\p_{x_3})
W_5=0,\\
		W_5(r_0,\theta,x_3)=\epsilon K_{en}(\theta,x_3).
	\end{cases}
\end{eqnarray}

For any point $(r,\theta,x_3)\in \bar\Omega$, we consider the following ODE system
\begin{equation*}
\begin{cases}
		\frac{d}{dz_1}z_2(z_1;r,\theta,x_3)=
		\frac{1}{z_1}R_2^\sharp(z_1,z_2(z_1;r,\theta,x_3),z_3(z_1;r,\theta,x_3)),
		\\
		\frac{d}{d z_1}z_3(z_1;r,\theta,x_3)=
		R_3^\sharp(z_1,z_2(z_1;r,\theta,x_3),z_3(z_1;r,\theta,x_3)),\\
		z_2(r;r,\theta,x_3)=\theta,\\
        z_3(r;r,\theta,x_3)=x_3.
\end{cases}
\end{equation*}
Denote $z_{i}^0(r,\theta,x_3)=z_i(r_0;r,\theta,x_3),i=2,3$, one deduces that
$(z_2(r_0;r,\theta,x_3),z_3(r_0;r,\theta,x_3))\in C^{2,\al}(\bar \Omega)$, owing to $W^\sharp_i\in C^{2,\al}(\bar \Omega),\,i=1,2,3$, which means that there exists a positive constant $C$ such that
\begin{eqnarray*}
	\|(z_2^0(r,\theta,x_3),z_3^0(r,\theta,x_3))\|_{C^{2,\al}(\bar\Omega)}\leq C,
\end{eqnarray*}
and thereby
\begin{eqnarray*}
	\begin{cases}
		W_4(r,\theta,x_3)=\epsilon A_{en}(z_2^0,z_3^0),\\
	W_5(r,\theta,x_3)=\epsilon K_{en}(z_2^0,z_3^0),
	\end{cases}
\end{eqnarray*}
with the estimate holding
\begin{equation}\label{est_W4_W5}
	\|(W_4,W_5)\|_{C^{2,\al}(\bar \Omega)}\leq C\epsilon\|(A_{en},K_{en})\|_{C^{2,\al}(\bar \Omega)}\leq C\epsilon,
\end{equation}
and the compatibility conditions
\begin{equation}\label{cp_W4W5}
	\p_{x_3}(W_4,W_5)(r,\th,\pm 1)=0.
\end{equation}
Indeed, thanks to the compatibility conditions
\begin{equation*}
	\p_{x_3}W_{i}^\sharp(r,\th,\pm 1)=W_3^\sharp(r,\th,\pm 1)=0,\,i=1,2,
\end{equation*}
one can easily derive
\begin{eqnarray*}
	\frac{\p z_2^0}{\p x_3}(r,\th,\pm 1)=0,\quad z_3^0(r,\th,\pm 1)=\pm 1.
\end{eqnarray*}
Consequently, a direct computation shows that
$$\p_{x_3}W_4(r,\th,\pm1)=
\p_{z_2^0}A_{en}(z_2^0,z_3^0)\frac{\p z_2^0}{\p x_3}(r,\th,\pm 1)
+\p_{z_3^0}A_{en}(z_2^0,z_3^0)\frac{\p z_3^0}{\p x_3}(r,\th,\pm 1)=0,$$
by using $\p_{x_3}A_{en}(\th,\pm 1)=0.$ Obviously, the same conclusion also holds true for $W_5$.

{\bf Step 2.} We next solve the following transport equation for the first component $\omega_1$ of the vorticity:
\begin{eqnarray}\label{eq_omg1}
	\begin{cases}
			(
		\p_r
		+R_2^\sharp\frac{1}{r}\p_{\theta}
		+R_3^\sharp\p_{x_3}
		)\omega_1+
		a_0^\sharp(r,\theta,x_3)\omega_1=
		F_0^\sharp(r,\theta,x_3),\\
		\omega_1(r_0,\theta,x_3)=\epsilon \omega_{en}(\theta,x_3),
	\end{cases}
\end{eqnarray}
where
\begin{align*}
&a_0^\sharp(r,\theta,x_3)=\frac{1}{r}
+\frac{1}{r}\p_{\theta}R_2^\sharp+\p_{x_3}R_3^\sharp,\\
&F_0^\sharp(r,\theta,x_3)=
\p_{x_3}R_1^\sharp \frac{1}{r}\p_{\theta}W_5
-\frac{1}{r}\p_{\theta}R_1^\sharp\p_{x_3}W_5\\
&\quad\quad\quad\quad\quad\quad\quad\quad
+\frac{1}{\gamma-1}\frac{1}{r}\p_{\theta}(R_1^\sharp H_\sharp^{\gamma-1})\p_{x_3}W_4
-\frac{1}{\gamma-1}\p_{x_3}(R_1^\sharp H_\sharp^{\gamma-1})\frac{1}{r}\p_{\theta}W_4,\\
&H_\sharp(r,\theta,x_3)=H(r,\theta,x_3;W_1^\sharp,W_2^\sharp,W_3^\sharp,W_4,W_5,W_6^\sharp),
\end{align*}
with
\begin{eqnarray*}
	H(r,\theta,x_3;{\bf W})=\left(
	\frac{\gamma-1}{\ga(W_4+A_0)}\left(W_5+K_0+W_6+\bar\Phi_s-\frac{1}{2}\left((W_1+\bar U_{1s})^2+(W_2+\bar U_{2s})^2+W_3^2\right)\right)
	\right)^\frac{1}{\gamma-1}.
\end{eqnarray*}

The characteristic method gives
\begin{align*}
		\omega_1(r,\theta,x_3)=&
	\epsilon\omega_{en}(z_2^0,z_3^0)
	e^{-\int_{r_0}^{r} a_0^\sharp(\tau,z_2(\tau;r,\theta,x_3),z_3(\tau;r,\theta,x_3))\,d\tau}\\
	&+\int_{r_0}^{r}F_{0}^\sharp(s,z_2(s;r,\theta,x_3),
	z_3(s;r,\theta,x_3)
	)	e^{-\int_{s}^{r} a_0^\sharp(\tau,z_2(\tau;r,\theta,x_3),z_3(\tau;r,\theta,x_3))\,d\tau} ds,
\end{align*}
from which one asserts
\begin{eqnarray}\label{est_omg1}
	\|\omega_1\|_{C^{1,\al}(\bar{\Omega})}\leq C(\epsilon+
	\delta^2+\delta\epsilon).
\end{eqnarray}

Since  ${\bf W}^\sharp\in\mathcal{S}_{\delta}$ satisfies the compatibility conditions, one can check that
$$F_0^\sharp(r,\th,\pm 1)=0,$$
by using \eqref{cp_W4W5}. Combining with $\omega_{en}(\th,\pm 1)=0$ and $z_3^0(r,\th,\pm 1)=\pm 1$, we have the following compatibility conditions for $\omega_1$
\begin{equation}\label{cp_omega1}
	\omega_1(r,\th,\pm 1)=0.
\end{equation}

Inserting \eqref{est_omg1} into \eqref{xuandu2} leads to
\begin{eqnarray}\label{eq_omg23}
\begin{cases}
\omega_2=R_2^\sharp \omega_1+R_1^\sharp\p_{x_3}W_5
-\frac{R_1^\sharp H_{\sharp}^{\gamma-1}}{\gamma-1}\p_{x_3}W_4,\\
\omega_3=R_3^\sharp\omega_1-R_1^\sharp \frac{1}{r}\p_{\theta}W_5
+\frac{R_1^\sharp H_{\sharp}^{\gamma-1}}{\gamma-1}\frac{1}{r}\p_{\theta}W_4.
\end{cases}
\end{eqnarray}
A straightforward computation yields that
\begin{equation}
	\sum_{j=2}^{3}\|\omega_j\|_{C^{1,\al}(\bar \Omega)}\leq C(\epsilon+\delta^2+\delta\epsilon).
\end{equation}

Using \eqref{cp_W4W5} and \eqref{cp_omega1}, one can also verify the following compatibility conditions:
\begin{eqnarray}\label{cp_omega23}
	\omega_2(r,\th,\pm 1)=\p_{x_3}\omega_3(r,\th,\pm 1)=0.
\end{eqnarray}
Moreover, one can easily check that
\begin{equation}\label{weiling}
	\begin{aligned}
		\div_{(r,\theta,x_3)}{\bm{\omega}}&=\frac{1}{r}\p_{r}(r\omega_1)+\frac{1}{r}\p_{\theta}\omega_2+\p_{x_3}\omega_3\\
&=\p_{r}\omega_1+\frac{1}{r}\omega_1+\frac{1}{r}\p_{\theta}
		\left(
R_2^\sharp \omega_1+R_1^\sharp\p_{x_3}W_5
-\frac{R_1^\sharp H_{\sharp}^{\gamma-1}}{\gamma-1}\p_{x_3}W_4
		\right)\\
		&\quad+\p_{x_3}
		\left(
R_3^\sharp\omega_1-R_1^\sharp \frac{1}{r}\p_{\theta}W_5
+\frac{R_1^\sharp H_{\sharp}^{\gamma-1}}{\gamma-1}\frac{1}{r}\p_{\theta}W_4
		\right)\\
		&=
		(\p_{r}+R_2^\sharp\frac{1}{r}\p_{\theta}+R_3^\sharp\p_{x_3})\omega_1+
		(\frac{1}{r}+\frac{1}{r}\p_{\theta}R_2^\sharp+\p_{x_3}R_3^\sharp)\omega_1\\
		&\quad+\frac{1}{r}\p_{\theta}R_1^\sharp\p_{x_3}W_5
	-\frac{1}{\gamma-1}\frac{1}{r}\p_{\theta}(R_1^\sharp H_\sharp^{\gamma-1})\p_{x_3}W_4\\
	&\quad-\p_{x_3}R_1^\sharp \frac{1}{r}\p_{\theta}W_5
	+\frac{1}{\gamma-1}\p_{x_3}(R_1^\sharp H_\sharp^{\gamma-1})\frac{1}{r}\p_{\theta}W_4=0.
	\end{aligned}
\end{equation}

{\bf Step 3.} We now intend to solve the velocity field and electrostatic potential.
By virtue of \eqref{midu}, one computes that
\begin{equation}\label{mimi} \begin{split}
\rho&=H(W_4+A_0, W_5+K_0, W_6+\bar\Phi_s,|{\bf V}+\bar {\bf U}_s|^2)\\
&=\bar\rho_s
-\frac{\bar\rho_s \bar U_{1s}}{\bar c_s^2} W_1-\frac{\bar\rho_s \bar U_{2s}}{\bar c_s^2} W_2-\frac{\bar\rho_s}{(\gamma-1)A_0}W_4+\frac{\bar\rho_s}{\bar c_s^2}W_5+\frac{\bar\rho_s}{\bar c_s^2}W_6+O(|{\bf W}|^2),
\end{split}  \end{equation}
and
\begin{equation}\label{shengsu}  \begin{split}
	&\quad c^2(W_5+K_0,W_6+\bar\Phi_s,|{\bf V}+\bar {\bf U}_s|^2)\\
&=\bar c_s^2+(\gamma-1)\left(W_5+W_6-\bar U_{1s} W_1-\bar U_{2s} W_2\right)-\frac{1}{2}(\gamma-1)(W_1^2+W_2^2+W_3^2),
\end{split}  \end{equation}
where
\begin{eqnarray*}\no
\bar\rho_s=H(A_0,K_0,\bar\Phi_s,|\bar \U_s|^2), \quad \bar c_s=c(\bar\rho_s, K_0, \bar\Phi_s).
\end{eqnarray*}
Let us for simplicity denote $Q(W_4,W_5)(r,\theta,x_3)=:\frac{\bar\rho_s}{(\gamma-1)A_0}W_4-\frac{\bar\rho_s}{\bar c_s^2}W_5$, then one derives by substituting \eqref{mimi} and \eqref{shengsu} into \eqref{zhu} that
\begin{eqnarray}\label{def_curl}
	\begin{cases}
		\p_{r}(r\bar\rho_s(1-\bar M_{1s}^2)W_1)
		-\p_{r}\left(r\bar\rho_s\bar M_{1s}\bar M_{2s} W_2\right)
		+\p_{\theta}(\bar\rho_s(1-\bar M_{2s}^2)W_2)\\
		\quad-\p_{\theta}\left(\bar\rho_s\bar M_{1s}\bar M_{2s}W_1\right)
		+\p_{x_3}(r\bar\rho_s W_3)
		+\p_{r}\left(\frac{r\bar\rho_s \bar U_{1s}}{\bar c_s^2}W_6\right)
		+\p_{\theta}\left(\frac{\bar\rho_s \bar U_{2s}}{\bar c_s^2}W_6\right)\\
		=
		\p_{r}(r\bar U_{1s} Q(W_4,W_5))			
		+\p_{\theta}(\bar U_{2s} Q(W_4,W_5))+
		\p_{r}(rF_1^\sharp)+\p_{\theta}F_2^\sharp+\p_{x_3}(rF_3^\sharp),\\
		\frac{1}{r}\p_{\theta} W_3-\p_{x_3} W_2=\omega_1,\\
		\p_{x_3} W_1-\p_{r} W_3=\omega_2,\\
	\frac{1}{r}\p_{r}(rW_2)-\frac{1}{r}\p_{\theta} W_1=\omega_3,\\
		(\p_{r}^2+\frac{1}{r}\p_r+\frac{1}{r^2}\p_{\th}+\p_{x_3}^2)W_6+\frac{\bar\rho_s \bar U_{1s}}{\bar c_s^2} W_1+ \frac{\bar\rho_s \bar U_{2s}}{\bar c_s^2} W_2-\frac{\bar\rho_s}{\bar c_s^2}W_6=-Q(W_4,W_5)+G^\sharp,
	\end{cases}
\end{eqnarray}
equipped with the following boundary conditions
\begin{eqnarray}\label{bc_def_curl}
\begin{cases}
	\left(\bar\rho_s(1-\bar M_{1s}^2) W_1-\bar\rho_s\bar M_{1s}\bar M_{2s}W_2\right)(r_0, \th, x_3)=g_0^\sharp(\theta,x_3),\\
	\left(\bar\rho_s(1-\bar M_{1s}^2) W_1-\bar\rho_s\bar M_{1s}\bar M_{2s}W_2\right)(r_1, \th, x_3)=g_1^\sharp(\theta,x_3)-\ka,\\
	W_3(r,\theta,\pm 1)=0,\\
	W_6(r_0,\theta,x_3)=\epsilon \Phi_{en}(\theta,x_3),\\
	W_6(r_1,\theta,x_3)=\epsilon \Phi_{ex}(\theta,x_3),\\
	\p_{x_3}W_6(r,\theta,\pm 1)=0,
\end{cases}
\end{eqnarray}
associated with
\begin{eqnarray*}
	\begin{split}
			g_0^\sharp(\theta,x_3;{\bf V}^\sharp)&=\epsilon \left(
		m_{en}+\frac{\bar\rho_s\bar U_{1s}}{(\gamma-1)A_0}A_{en}-\frac{\bar\rho_s\bar U_{1s}}{\bar c_s^2}K_{en}-\Phi_{en}\right)\\
		&\quad-\bigg(
		H(\epsilon A_{en}+A_0,\epsilon k_{en}+K_0,\epsilon \Phi_{en}+\bar\Phi_s,|{\bf V}^\sharp+\bar\U_s|^2)
		-H(A_0,K_0,\bar\Phi_s,|\bar\U_s|^2)
		\bigg)
		W_1^\sharp\bigg|_{r=r_0}\\
		&\quad-\bigg(
		H(\epsilon A_{en}+A_0,\epsilon k_{en}+K_0,\epsilon\Phi_{en}+\bar\Phi_s,|{\bf V}^\sharp+\bar\U_s|^2)
		-H(A_0,K_0,\bar\Phi_s,|\bar\U_s|^2)\\
		&\quad+\frac{\bar\rho_s\bar U_{1s}}{\bar c_s^2}W_1^\sharp
		+\frac{\bar\rho_s\bar U_{2s}}{\bar c_s^2}W_2^\sharp
		-\frac{\bar\rho_s}{\bar c_s^2} \epsilon\Phi_{en}
		+\frac{\bar\rho_s}{(\gamma-1)A_0} \epsilon A_{en}
		-\frac{\bar\rho_s}{\bar c_s^2}\epsilon K_{en}
		\bigg)
		\bar U_{1s}\bigg|_{r=r_0},
\end{split}
\end{eqnarray*}
\begin{eqnarray*}
	\begin{split}
			g_1^\sharp(\theta,x_3;{\bf V}^\sharp)&=\epsilon \left(
m_{ex}-\frac{\bar\rho_s \bar U_{1s}}{\bar c_s^2}\Phi_{ex}\right)
+\bar U_{1s} Q(W_4,W_5)\bigg|_{r=r_1}\\
&\quad-\bigg(
H(W_4+A_0,W_5+K_0,\epsilon\Phi_{ex}+\bar\Phi_s,|{\bf V}^\sharp+\bar \U_s|^2)
-H(A_0,K_0,\bar\Phi_s,|\bar \U_s|^2)
\bigg)
W_1^\sharp\bigg|_{r=r_1}\\
&\quad-\bigg(
H(W_4+A_0,W_5+K_0,\epsilon\Phi_{ex}+\bar\Phi_s,|{\bf V}^\sharp+\bar \U_s|^2)
-H(A_0,K_0,\bar\Phi_s,|\bar\U_s|^2)\\
&\quad+\frac{\bar\rho_s\bar U_{1s}}{\bar c_s^2}W_1^\sharp
+\frac{\bar\rho_s\bar U_{2s}}{\bar c_s^2}W_2^\sharp
-\frac{\bar\rho_s}{\bar c_s^2}\epsilon\Phi_{ex}
+\frac{\bar\rho_s}{(\gamma-1)A_0} W_4
-\frac{\bar\rho_s}{\bar c_s^2}W_5
\bigg)
\bar U_{1s}\bigg|_{r=r_1},	
	\end{split}
\end{eqnarray*}
where
\begin{eqnarray*}
\begin{split}
	F_i^\sharp(r,\theta,x_3;{\bf V}^\sharp,W_6^\sharp)&=-\left(
	H(W_4+A_0,W_5+K_0,W_6^\sharp+\bar\Phi_s,|{\bf V}^\sharp+\bar \U_s|^2)
	-H(A_0,K_0,\bar\Phi_s,|\bar \U_s|^2)
	\right)	W_i^\sharp\\
	&\quad-\bigg(
	H(W_4+A_0,W_5+K_0,W_6^\sharp+\bar\Phi_s,|{\bf V}^\sharp+\bar \U_s|^2)
	-H(A_0,K_0,\bar\Phi_s,|\bar \U_s|^2)\\
	&\quad+\frac{\bar\rho_s\bar U_{1s}}{\bar c_s^2}W_1^\sharp
	+\frac{\bar\rho_s\bar U_{2s}}{\bar c_s^2}W_2^\sharp
	-\frac{\bar\rho_s}{\bar c_s^2}W_6^\sharp
	+\frac{\bar\rho_s}{(\gamma-1)\bar A} W_4
	-\frac{\bar\rho_s}{\bar c_s^2}W_5
	\bigg)
	\bar U_i,\,i=1,2,\\
	F_3^\sharp(r,\theta,x_3;{\bf V}^\sharp,W_6^\sharp)&=-\left(
	H(W_4+A_0,W_5+\bar K,W_6^\sharp+\bar\Phi_s,|{\bf V}^\sharp+\bar \U_s|^2)
	-H(A_0,K_0,\bar\Phi_s,|\bar \U_s|^2)
	\right)	W_3^\sharp,\\
	G^\sharp(r,\theta,x_3;{\bf V}^\sharp,W_6^\sharp) &=H(W_4+A_0,W_5+K_0,W_6^\sharp+\bar\Phi_s,|{\bf V}^\sharp+\bar \U_s|^2)
	-H(A_0,K_0,\bar\Phi_s,|\bar \U_s|^2)\\
	&\quad+\frac{\bar\rho_s\bar U_{1s}}{\bar c_s^2}W_1^\sharp
	+\frac{\bar\rho_s\bar U_{2s}}{\bar c_s^2}W_2^\sharp
	-\frac{\bar\rho_s}{\bar c_s^2}W_6^\sharp
	+\frac{\bar\rho_s}{(\gamma-1)A_0} W_4
	-\frac{\bar\rho_s}{\bar c_s^2}W_5-\epsilon \tilde b(r,\theta,x_3),
\end{split}
\end{eqnarray*}
where we have used $b=\bar b(r)+\epsilon \tilde b$,
and the constant $\ka$ is represented by
\begin{equation}
	\begin{split}
		4\pi r_1\ka&=
		\int_{\{r=r_1\}} r_1\left[g_1^\sharp+\frac{\bar\rho_s\bar U_{1s}}{\bar c_s^2}\Phi_{ex}-\bar U_{1s}Q(W_4,W_5)-F_1^\sharp\right]_{r=r_1}\,
		d\theta dx_3\\
	&\quad-\int_{\{r=r_0\}} r_0\left[g_0^\sharp+\frac{\bar\rho_s\bar U_{1s}}{\bar c_s^2}\Phi_{en}-\bar U_{1s}Q(W_4,W_5)-F_1^\sharp\right]_{r=r_0}\,
	d\theta dx_3.
	\end{split}
\end{equation}

By \eqref{weiling}, one has $\div_{(r,\theta,x_3)}{\bm{\omega}}=0$. Consider the homogeneous system 
\begin{eqnarray}\label{frak_W}
	\begin{cases}
		\p_r B_1+\frac1 rB_1+\frac1 r\p_{\th}B_2+\p_{x_3}B_3=0,\,\,\text{in}\,\Omega\\
		\frac{1}{r}\p_{\theta}B_3
		-\p_{x_3}B_2=0,\,\,\text{in}\,\Omega\\
		\p_{x_3}B_1
		-\p_{r}B_3=0,\,\,\text{in}\,\Omega\\
		\p_{r}B_2
		-\frac{1}{r}\p_{\theta}B_1+\frac{B_2}{r}=0,\,\,\text{in}\,\Omega\\
		B_1(r_0, \th, x_3)=B_1(r_1, \th, x_3)=0,\quad B_3(r, \th, \pm 1)=0.
    \end{cases}
\end{eqnarray}

Although $\Omega$ is non-simply-connected, we can conclude that, by Proposition 3.14 in \cite{abdg98}, the linear space
\begin{equation}
	K_T(\Omega)=\{
	B\in (L^2(\Omega))^3:B\, \text{satisfies}\,\eqref{frak_W}
	\}
\end{equation}
is not empty and has dimension 1. Then, by using the results in \cite{cs17, ky09}, there exists a unique smooth vector field $(\tilde W_1,\tilde W_2,\tilde W_3)\in (C^{2,\al}(\Omega))^3$ to
\begin{eqnarray}\label{div_curl-tilde}
	\begin{cases}
		\div_{(r,\theta,x_3)} \tilde{\bf W}=0,\\
		\frac{1}{r}\p_{\theta}\tilde W_3-\p_{x_3}\tilde W_2=\omega_1,\\
		\p_{x_3}\tilde W_1-\p_{r}\tilde W_3=\omega_2,\\
		\p_{r}\tilde W_2-\frac{1}{r}\p_{\theta}\tilde W_1+\frac{\tilde W_2}{r}=\omega_3,\\
		\tilde W_1(r_0,\theta,x_3)=0,\,\,\tilde W_1(r_1,\theta,x_3)=0,\,\,\tilde W_3(r,\th,\pm 1)=0,\\
		\tilde W_i(r,\th,x_3)=\tilde W_i(r,\th+2\pi,x_3),i=1,2,3,
	\end{cases}
\end{eqnarray}
which satisfies the following properties:
\begin{itemize}
  \item $\tilde W_1 \vec{e}_r+\tilde W_2\vec{e}_{\th}+\tilde W_3\vec{e}_3$ is perpendicular to $K_T(\Omega)$ in the sense of the inner product of $(L^2(\Omega))^3.$
  \item $(\tilde W_1,\tilde W_2,\tilde W_3)$ satisfies the estimates
\begin{equation}
	\sum_{i=1}^{3}\|\tilde W_i\|_{C^{2,\al}(\bar \Omega)}\leq
	C \sum_{i=1}^{3}\|\omega_i\|_{C^{1,\al}(\bar \Omega)},
\end{equation}
\end{itemize}
and the compatibility conditions:
\begin{equation}\label{cp_tilde123}
	\p_{x_3}(\tilde W_1,\tilde W_2)(r,\th,\pm 1)
	=(\tilde W_3,\p_{x_3}^2\tilde W_3)(r,\th,\pm 1)=0,
\end{equation}
thanks to \eqref{cp_omega1} and \eqref{cp_omega23}.

Setting $\check{W}_j=W_j-\tilde W_j$, $j=1, 2, 3$, thus there holds $\curl\check{W}=0$. One may assume the existence of a potential function $\psi$ such that
\begin{equation*}
\check{W}=\nabla_{(r,\theta,x_3)}\psi=(\p_{r}\psi,\frac{1}{r}\p_{\theta}\psi,\p_{x_3}\psi).
\end{equation*}

We next proceed by replacing
$$
W_1=\p_{r}\psi+\tilde W_1,\quad
	W_2=\frac{1}{r}\p_{\theta}\psi+\tilde W_2,\quad
	W_3=\p_{x_3}\psi+\tilde W_3,
$$
into \eqref{def_curl}, and performing some direct calculations, to derive by taking $r=z_1,\theta=z_2,x_3=z_3$ that
\begin{eqnarray}\label{psi_W6}
	\begin{cases}
	\p_{z_i}(\mathcal{A}_{ij}\p_{z_j}\psi)
+\p_{z_1}\left(\frac{z_1\bar\rho_s\bar U_{1s}}{\bar c_s^2} W_6\right)
+\p_{z_2}\left(\frac{\bar\rho_s\bar U_{2s}}{\bar c_s^2} W_6\right)
=\div_z{\tilde {\bf F}},\\
		\p_{z_i}(\mathcal{B}_{ij}\p_{z_j}W_6)+\frac{z_1\bar\rho_s \bar U_{1s}}{\bar c_s^2} \p_{z_1}\psi+ \frac{\bar\rho_s \bar U_{2s}}{\bar c_s^2} \p_{z_2}\psi-\frac{z_1\bar\rho_s}{\bar c_s^2}W_6=\tilde G,
	\end{cases}
\end{eqnarray}
with the corresponding boundary conditions
\begin{eqnarray}\label{BC_psi_W6}
\begin{cases}
	\left(r_0\bar\rho_s(1-\bar M_{1s}^2)\p_{z_1}\psi-\bar\rho_s \bar M_{1s}\bar M_{2s}\p_{z_2}\psi\right)(r_0,z')=\tilde g_0(z'),\\
	\left(r_1\bar\rho_s(1-\bar M_{2s}^2)\p_{z_1}\psi-\bar\rho_s \bar M_{1s}\bar M_{2s}\p_{z_2}\psi\right)(r_1,z')=\tilde g_1(z'),\\
	\p_{z_3}\psi(z_1,z_2,\pm 1)=0,\\
	W_6(r_0,z')=\epsilon\Phi_{en}(z'),\\
	W_6(r_1,z')=\epsilon \Phi_{ex}(z'),\\
	\p_{z_3}W_6(z_1,z_2,\pm 1)=0,
\end{cases}
\end{eqnarray}
where $z'=(z_2,z_3)$ and
\begin{eqnarray*}
	\begin{split}
	&\tilde F_1=r
	\left(
	F_1^\sharp-\bar\rho_s(1-\bar M_{1s}^2)\tilde W_1
		+\bar\rho_s\bar M_{1s}\bar M_{2s} \tilde W_2+\frac{\bar\rho_s\bar U_{1s}}{(\gamma-1)A_0}W_4-\frac{\bar\rho_s\bar U_{1s}}{\bar c_s^2}W_5
		\right),\\
		&\tilde F_2=F_2^\sharp-\bar\rho_s(1-\bar M_{2s}^2)\tilde W_2
		+\bar\rho_s\bar M_{1s}\bar M_{2s} \tilde W_1+\frac{\bar\rho_s\bar U_{2s}}{(\gamma-1)A_0}W_4-\frac{\bar\rho_s\bar U_{2s}}{\bar c_s^2}W_5,\\
		&\tilde F_3=r(F_3^\sharp-\bar\rho_s\tilde W_3),\\
		&\tilde G=r\left(
		G^\sharp-Q(W_4,W_5)
		-\frac{\bar\rho_s\bar U_{1s}}{\bar c_s^2}\tilde W_1
		-\frac{\bar\rho_s\bar U_{2s}}{\bar c_s^2}\tilde W_2
		\right),\\
&\tilde g_i(z')=r_i g_i^\sharp(z'),\,i=0,1,
	\end{split}
\end{eqnarray*}
and
\begin{eqnarray}\label{elip_coeff}
	\mathcal{A}_{ij}=
	\left(
	\begin{array}{ccc}
		z_1\bar\rho_s(1-\bar M_{1s}^2) &-\bar\rho_s \bar M_{1s}\bar M_{2s} &0\\
		-\bar\rho_s \bar M_{1s}\bar M_{2s} &\frac{1}{z_1}\bar\rho_s(1-\bar M_{2s}^2)&0\\
		0&0&z_1\bar\rho_s\\
	\end{array}
	\right),
		\mathcal{B}_{ij}=
	\left(
	\begin{array}{ccc}
		z_1 &0 &0\\
		0 &\frac{1}{z_1}&0\\
		0&0&z_1\\
	\end{array}
	\right).
\end{eqnarray}
A direct computation gives $\det|\mathcal{A}_{ij}(z_1)|=z_1\bar\rho_s^2(1-\bar M_{1s}^2-\bar M_{2s}^2)>0$, and $\det|\mathcal{B}_{ij}(z_1)|=z_1$. That is to say, $\mathcal{A}_{ij},\,\mathcal{B}_{ij}$ are both strictly positive symmetric matrixes in $\Omega$, and there exist positive constants $\lambda_A,\lambda_B>0$ satisfying
\begin{eqnarray}\label{coeff_posi}
	\mathcal{A}_{ij}(z_1)\zeta_i\zeta_j\geq \lambda_A|\zeta|^2,\\
	\mathcal{B}_{ij}(z_1)\zeta_i\zeta_j\geq \lambda_B|\zeta|^2,
\end{eqnarray}
for all $0<r_0\leq z_1\leq r_1$ and $\zeta\in \mathbb{R}^3$.
The following lemma tells that the linear boundary value problem \eqref{psi_W6}-\eqref{BC_psi_W6} has a unique weak solution.

It can be verified easily that the function $\tilde {\bf F},\,\tilde G $ and $\tilde g_i,\,i=0,1$ satisfy the following compatibility
conditions
\begin{eqnarray}\label{cp_FGg}
	\begin{cases}
		\p_{x_3}(\tilde F_1,\tilde F_2)(r,\th,\pm 1)=0,\\
		(\tilde F_3,\p_{x_3}^2\tilde F_3)(r,\th,\pm 1)=0,\\
		\p_{x_3}\tilde G(r,\th,\pm 1)=0,\\
		\p_{x_3}(\tilde g_0,\tilde g_1)(r,\th,\pm 1)=0.
	\end{cases}
\end{eqnarray}

For the convenience,  we introduce the following function to homogenize the boundary data
\begin{eqnarray}
	\tilde W_6(z_1,z')=\epsilon\left(
	\frac{r_1-z_1}{r_1-r_0}\Phi_{en}(z')
	+\frac{z_1-r_0}{r_1-r_0}\Phi_{ex}(z')
	\right),
\end{eqnarray}
and set $\phi=W_6-\tilde W_6$, then the system \eqref{psi_W6}-\eqref{BC_psi_W6} is converted into
\begin{eqnarray}\label{psi_phi}
	\begin{cases}
		\p_{z_i}(\mathcal{A}_{ij}\p_{z_j}\psi)
		+\p_{z_1}\left(\frac{z_1\bar\rho_s\bar U_{1s}}{\bar c_s^2} \phi\right)
		+\p_{z_2}\left(\frac{\bar\rho_s\bar U_{2s}}{\bar c_s^2} \phi\right)
		=\div_z{\hat {\bf F}},\\
		\p_{z_i}(\mathcal{B}_{ij}\p_{z_j}\phi)+\frac{z_1\bar\rho_s \bar U_{1s}}{\bar c_s^2} \p_{z_1}\psi+ \frac{\bar\rho_s \bar U_{2s}}{\bar c_s^2} \p_{z_2}\psi-\frac{z_1\bar\rho_s}{\bar c_s^2}\phi=\hat G,
	\end{cases}
\end{eqnarray}
and
\begin{eqnarray}\label{BC_psi_phi}
	\begin{cases}
		\left(r_0\bar\rho_s(1-\bar M_{1s}^2)\p_{z_1}\psi-\frac{\bar\rho_s \bar U_{1s}\bar U_{2s}}{\bar c_s^2}\p_{z_2}\psi\right)(r_0,z')=\tilde g_0(z'),\\
		\left(r_1\bar\rho_s(1-\bar M_{1s}^2)\p_{z_1}\psi-\frac{\bar\rho_s \bar U_{1s}\bar U_{2s}}{\bar c_s^2}\p_{z_2}\psi\right)(r_1,z')=\tilde g_1(z'),\\
		\p_{z_3}\psi(z_1,z_2,\pm 1)=0,\\
		\phi(r_0,z')=0,\,
		\phi(r_1,z')=0,\\
		\p_{z_3}\phi(z_1,z_2,\pm 1)=0,
	\end{cases}
\end{eqnarray}
where
\begin{eqnarray}
	\begin{split}
		&\hat F_1=\tilde F_1-\frac{z_1\bar\rho_s\bar U_{1s}}{\bar c_s^2}\tilde W_6,\quad
		\hat F_2=\tilde F_2-\frac{\bar\rho_s\bar U_{2s}}{\bar c_s^2}\tilde W_6,\quad\hat F_3=\tilde F_3,\\
		&\hat G=\tilde G-\p_{z_i}(\mathcal{B}_{ij}\p_{z_j}\tilde W_6)+\frac{z_1\bar\rho_s}{\bar c_s^2}\tilde W_6.
	\end{split}
\end{eqnarray}
Then, we have the following results.
\begin{lemma}\label{lemma}
The linear boundary value problem \eqref{psi_phi}-\eqref{BC_psi_phi} admits a unique weak solution $(\psi,\phi)\in H^1(\Omega)\times H^1(\Omega)$ such that
\begin{equation}\label{weak}
	\begin{split}
		\|(\psi,\phi)\|_{H^1(\Omega)}\leq& C
	\bigg(
	\|(\hat{\bf F},\hat G)\|_{L^2(\Omega)}+
	\|\tilde g_0-\hat F_1\|_{L^2(\Gamma^0)}\\
	&+
	\|\tilde g_1-\hat F_1\|_{L^2(\Gamma^1)}+
	\|\hat F_3\|_{L^2(\Gamma^{w\pm})}
	\bigg),
	\end{split}
\end{equation}
for some positive constant $C>0$.
\end{lemma}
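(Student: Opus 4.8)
We plan to recast \eqref{psi_phi}--\eqref{BC_psi_phi} as a single variational problem and apply the Lax--Milgram theorem for bounded coercive (not necessarily symmetric) bilinear forms. Since $\psi$ occurs in \eqref{psi_phi} only through $\nabla_z\psi$, subject to the conormal (natural) conditions $\tilde g_0,\tilde g_1$ on $\Gamma^0,\Gamma^1$ and to $\p_{z_3}\psi=0$ on the walls --- which is precisely the vanishing of the conormal derivative $\mathcal A_{3j}\p_{z_j}\psi=z_1\bar\rho_s\p_{z_3}\psi$, because $\mathcal A_{3j}=0$ for $j=1,2$ --- we seek $\psi$ in $H^1(\Omega)/\mathbb{R}$ (normalized by $\int_\Omega\psi\,dz=0$), while $\phi$ is sought in $V_0:=\{\eta\in H^1(\Omega):\eta|_{\Gamma^0\cup\Gamma^1}=0\}$, the condition $\p_{z_3}\phi=0$ being natural for $\mathcal B$. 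Multiplying the first equation of \eqref{psi_phi} by a test function $\xi$ and the \emph{negative} of the second by $\eta$, integrating by parts and moving $\tilde g_0,\tilde g_1,\hat{\bf F},\hat G$ to the right-hand side yields $\mathcal L[(\psi,\phi),(\xi,\eta)]=\ell[(\xi,\eta)]$ for all $(\xi,\eta)$ in $V:=(H^1(\Omega)/\mathbb{R})\times V_0$, where
\[
\begin{aligned}
\mathcal L[(\psi,\phi),(\xi,\eta)]
&=\int_\Omega\mathcal A_{ij}\,\p_{z_j}\psi\,\p_{z_i}\xi
+\int_\Omega\Big(\tfrac{z_1\bar\rho_s\bar U_{1s}}{\bar c_s^2}\,\p_{z_1}\xi+\tfrac{\bar\rho_s\bar U_{2s}}{\bar c_s^2}\,\p_{z_2}\xi\Big)\phi\\
&\quad+\int_\Omega\mathcal B_{ij}\,\p_{z_j}\phi\,\p_{z_i}\eta
-\int_\Omega\Big(\tfrac{z_1\bar\rho_s\bar U_{1s}}{\bar c_s^2}\,\p_{z_1}\psi+\tfrac{\bar\rho_s\bar U_{2s}}{\bar c_s^2}\,\p_{z_2}\psi\Big)\eta
+\int_\Omega\tfrac{z_1\bar\rho_s}{\bar c_s^2}\,\phi\,\eta,
\end{aligned}
\]
and $\ell$ gathers $\int_\Omega\hat{\bf F}\cdot\nabla_z\xi-\int_\Omega\hat G\,\eta$, the boundary terms $\int_{\Gamma^1}(\tilde g_1-\hat F_1)\xi-\int_{\Gamma^0}(\tilde g_0-\hat F_1)\xi$, and boundary integrals of $\hat F_3$ over $\Gamma^{w\pm}$.

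The crux is coercivity, and it hinges on the \emph{special structure} of the Euler--Poisson system. In Bernoulli's law \eqref{midu} the potential $\Phi$ and $-\tfrac12|\U|^2$ enter in one and the same combination $K+\Phi-\tfrac12|\U|^2$, so $\p_\Phi H=-\p_{(\frac12|\U|^2)}H$; consequently the coefficient of $\phi$ in the continuity-derived equation of \eqref{psi_phi} coincides, term by term, with the coefficient of $\nabla_z\psi$ in the Poisson-derived equation (both equal to $(\tfrac{z_1\bar\rho_s\bar U_{1s}}{\bar c_s^2},\tfrac{\bar\rho_s\bar U_{2s}}{\bar c_s^2})$ after the substitution $W_i=\p_{z_i}\psi+\tilde W_i$). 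Hence, taking $\xi=\psi$ and $\eta=\phi$, the two first-order coupling integrals in $\mathcal L$ cancel exactly, leaving
\[
\mathcal L[(\psi,\phi),(\psi,\phi)]
=\int_\Omega\mathcal A_{ij}\,\p_{z_j}\psi\,\p_{z_i}\psi
+\int_\Omega\mathcal B_{ij}\,\p_{z_j}\phi\,\p_{z_i}\phi
+\int_\Omega\tfrac{z_1\bar\rho_s}{\bar c_s^2}\,\phi^2,
\]
all three terms having the favourable sign. Using the uniform ellipticity \eqref{coeff_posi}, the positivity of $z_1\bar\rho_s/\bar c_s^2$ on $[r_0,r_1]$, the Poincar\'e--Wirtinger inequality for $\psi$ in $H^1(\Omega)/\mathbb{R}$ and the Poincar\'e inequality for $\phi\in V_0$, we obtain $\mathcal L[(\psi,\phi),(\psi,\phi)]\ge c_0\|(\psi,\phi)\|_V^2$ with $c_0>0$ depending only on the background flow and $\Omega$.

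Boundedness of $\mathcal L$ on $V\times V$ follows from the $L^\infty$ bounds on the smooth background quantities $\bar\rho_s,\bar U_{1s},\bar U_{2s},\bar M_{1s},\bar M_{2s},\bar c_s$ over the compact interval $[r_0,r_1]$ together with the Cauchy--Schwarz inequality, while the trace theorem gives
\[
|\ell[(\xi,\eta)]|\le C\big(\|(\hat{\bf F},\hat G)\|_{L^2(\Omega)}+\|\tilde g_0-\hat F_1\|_{L^2(\Gamma^0)}+\|\tilde g_1-\hat F_1\|_{L^2(\Gamma^1)}+\|\hat F_3\|_{L^2(\Gamma^{w\pm})}\big)\|(\xi,\eta)\|_V .
\]
One point requires care: since all conditions on $\psi$ are of conormal type, solvability of its equation forces $\ell$ to annihilate constants, i.e.\ $\ell[(1,0)]=0$; this is exactly the compatibility relation encoded in the constant $\ka$ introduced in \eqref{bc_def_curl}, which makes $\ell$ a well-defined bounded functional on $V$. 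The Lax--Milgram theorem then produces a unique $(\psi,\phi)\in V$, and dividing the bound on $\ell$ by $c_0$ gives precisely the estimate \eqref{weak}.

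The main obstacle --- and the only nonroutine point --- is identifying and exploiting the exact cancellation of the $\psi$--$\phi$ first-order coupling: without this identity those two terms cannot be absorbed into the elliptic parts, and coercivity, hence existence and uniqueness at the linearized level, would break down. Everything else (the quotient space for $\psi$, the dichotomy between natural and essential boundary conditions, the trace estimates, and the solvability condition enforced by $\ka$) is standard.
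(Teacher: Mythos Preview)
Your proof is correct and follows essentially the same approach as the paper: set up the variational formulation on the mean-zero/vanishing-trace subspace, verify boundedness, and obtain coercivity via the exact cancellation of the two first-order $\psi$--$\phi$ coupling terms, then invoke Lax--Milgram. Your additional remarks on the origin of the cancellation in Bernoulli's law and on the role of the constant $\ka$ in enforcing the solvability condition $\ell[(1,0)]=0$ are accurate and make explicit what the paper leaves implicit.
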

\begin{proof}
To begin with, we introduce the following Hilbert space $\mathcal{H}$ in $\Omega$ as
\begin{equation*}
	\mathcal{H}=\{(\eta,\xi)\in H^1(\Omega)\times H^1(\Omega):\int _{\Omega}\eta dz=0,\,\xi(r_i,z')=0,i=0,1\}.
\end{equation*}
Hence, we say that $(\psi,\phi)\in \mathcal{H}$ is a weak solution to \eqref{psi_phi}-\eqref{BC_psi_phi} if and only if
\begin{equation}\label{def_weak_sol}
	\mathcal{B}[(\psi,\phi),(\eta,\xi)]=\mathcal{L}(\eta,\xi)
\end{equation}
is valid, where the bilinear operator $\mathcal{B}[\cdot,\cdot]$ and linear operator $\mathcal{L}$ on the Hilbert space $\mathcal{H}$ are respectively given by
\begin{eqnarray*}
	\begin{split}
		\mathcal{B}[(\psi,\phi),(\eta,\xi)]&:=
		\int_{\Omega} \mathcal{A}_{ij}\p_{z_j}\psi\p_{z_i}\eta\, dz
		+\int_{\Omega}\frac{z_1\bar\rho_s\bar U_{1s}}{\bar c_s^2}\phi\p_{z_1}\eta\, dz
		+\int_{\Omega}\frac{\bar\rho_s\bar U_{2s}}{\bar c_s^2}\phi\p_{z_2}\eta\, dz\\
		&\quad+\int_{\Omega} \mathcal{B}_{ij}\p_{z_j}\phi\p_{z_i}\xi dz
		-\int_{\Omega} \frac{z_1\bar\rho_s\bar U_{1s}}{\bar c_s^2}\p_{z_1}\psi\xi \,dz
		-\int_{\Omega}\frac{\bar\rho_s \bar U_{2s}}{\bar c_s^2}\p_{z_2}\psi\xi
		+\int_{\Omega}\frac{z_1\bar\rho_s}{\bar c_s^2}\phi\xi\,dz,
	\end{split}
\end{eqnarray*}
and
\begin{eqnarray*}
	\begin{split}
		\mathcal{L}(\eta,\xi)&:
		=\int_{\Omega}\left(\hat F_i\p_{z_i}\eta-\hat G\xi\right)\,dz+
		\int_{\{z_1=r_1\}} (\tilde g_1-\hat F_1)\,\eta dz_2dz_3\\
		&\quad-\int_{\{z_1=r_0\}} (\tilde g_0-\hat F_1)\,\eta dz_2dz_3
		-\int_{\{z_3=1\}}\hat F_3\eta dz_1dz_2
		+\int_{\{z_3=-1\}}\hat F_3\eta dz_1dz_2,
	\end{split}
\end{eqnarray*}
for any $(\eta,\xi)\in\mathcal{H}$.
Obviously, the bilinear form $\mathcal{B}[\cdot,\cdot]$ and the linear operator $\mathcal{L}$ are bounded in  $\mathcal{H}\times\mathcal{H}$ and $\mathcal{H}$, respectively. More importantly, the bilinear form $\mathcal{B}[\cdot,\cdot]$ is also coercive in $\mathcal{H}\times\mathcal{H}$ by virtue that
\begin{align*} \mathcal{B}[(\psi,\phi),(\psi,\phi)]&=\int_{\Omega}\mathcal{A}_{ij}\p_{z_j}\psi\p_{z_i}\psi+\mathcal{B}_{ij}\p_{z_j}\phi\p_{z_i}\phi+\frac{z_1\bar\rho_s}{\bar c_s^2}\phi^2 dz\\
    &\geq \int_{\Omega}\lambda_A|\nabla\psi|^2+\lambda_B|\nabla\phi|^2
	+\frac{z_1\bar\rho_s}{\bar c_s^2}\phi^2 dz\geq C\|(\psi,\phi)\|^2_{\mathcal{H}},
\end{align*}
for some positive constant $C$. It is worth mentioning that the mixed terms in $\mathcal{B}[(\psi,\phi),(\psi,\phi)]$ cancel each other and this special structure originally comes from the steady Euler-Poisson system and plays a key role in our analysis. Similar structure to the steady Euler-Poisson system in flat nozzles had been found in \cite{Bae1}.

By applying the Lax-Milgram theorem, one sees that there exists a unique weak solution $(\psi,\phi)\in \mathcal{H}$ to \eqref{psi_phi}-\eqref{BC_psi_phi}. In short, we can recover $W_6$ by $W_6=\phi+\tilde W_6$ and the proof is completed.
\end{proof}

Note that the system \eqref{psi_W6}-\eqref{BC_psi_W6} is similar to the elliptic equations investigated in \cite{Bae1}. By adapting the regularity theory developed in \cite{Bae1}, we can further improve the regularity of weak solution obtained in Lemma \ref{lemma}. Firstly, we can establish $C^\al$ estimate for the weak solution $(\psi,\phi)$.
\begin{lemma}\label{C_al_est}
There is a $\bar\al\in(0,1)$, such that for any $\al\in(0,\bar\al]$, the weak solutions $(\psi,\phi)$ obtained in Lemma \ref{lemma} satisfy
\begin{equation}\label{C_al}
	\|(\psi,\phi)\|_{C^{\al}(\bar\Omega)}\leq C_0,
\end{equation}
for some positive constant  $C_0$ depending only on the data and $\al$.
\end{lemma}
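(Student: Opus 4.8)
The plan is to obtain the $C^\al$ bound by a De~Giorgi--Nash--Moser type argument applied to the coupled system \eqref{psi_W6}--\eqref{BC_psi_W6}, treating the lower-order coupling terms as bounded source terms once an $L^\infty$ (or at least $L^p$ with large $p$) control on $(\psi,\phi)$ is in hand. The first step is to upgrade the $H^1$ bound of Lemma \ref{lemma} to an $L^\infty$ bound. For the $\phi$-equation this is the cleanest: it is a uniformly elliptic equation in divergence form $\p_{z_i}(\mathcal{B}_{ij}\p_{z_j}\phi)=\hat G-\frac{z_1\bar\rho_s \bar U_{1s}}{\bar c_s^2}\p_{z_1}\psi-\frac{\bar\rho_s \bar U_{2s}}{\bar c_s^2}\p_{z_2}\psi+\frac{z_1\bar\rho_s}{\bar c_s^2}\phi$ with homogeneous Dirichlet data on $\Gamma^0\cup\Gamma^1$ and homogeneous Neumann data on $\Gamma^{w\pm}$; the right-hand side lies in $L^2$ (using $\nabla\psi\in L^2$ from Lemma \ref{lemma}), so standard $L^2\to L^\infty$ estimates for mixed boundary value problems (reflecting across $z_3=\pm 1$ to reduce the Neumann part to an interior/Dirichlet situation) give $\|\phi\|_{L^\infty}\le C$. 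For $\psi$ one uses the first equation: after moving the $\phi$-terms (now $L^\infty$, hence $L^2$) to the right side, $\psi$ solves a uniformly elliptic divergence-form equation with conormal-type boundary conditions on $\Gamma^0,\Gamma^1$ and Neumann on $\Gamma^{w\pm}$, and the same Moser iteration (again after even reflection in $z_3$) yields $\|\psi\|_{L^\infty}\le C$, up to fixing the additive constant by $\int_\Omega\psi=0$.

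The second step is to run the interior and boundary De~Giorgi--Nash oscillation estimates. With $(\psi,\phi)\in L^\infty$ already known, each of the two equations is a scalar uniformly elliptic equation in divergence form whose full right-hand side (source plus the coupling term built from the other unknown and its gradient) is controlled: for the $\phi$-equation the troublesome term is $\nabla\psi\in L^2$, and for the $\psi$-equation the lower-order term is merely $L^\infty$. The cleanest route is to treat $\nabla\psi$ as an $L^p$ source for some $p>3$ by first bootstrapping: once $\psi\in C^\al$ is established (via its own equation, whose right side is $L^\infty$ plus a divergence of an $L^\infty$ field, hence falls directly under the De~Giorgi--Nash theorem for mixed Dirichlet/conormal-Neumann boundary conditions — near $\Gamma^0,\Gamma^1$ one uses the conormal estimate with boundary data $\tilde g_i\in C^\al\subset L^\infty$, near $\Gamma^{w\pm}$ one reflects), classical $W^{1,p}$ estimates give $\nabla\psi\in L^p$ for every $p<\infty$, and then the $\phi$-equation has an $L^p$ right-hand side so that $\phi\in C^\al$ by $W^{2,p}$ embedding (or again De~Giorgi--Nash directly). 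Handling the non-divergence/non-homogeneous features uniformly up to $\bar\Omega$ — in particular the conormal boundary operator $r_i\bar\rho_s(1-\bar M_{1s}^2)\p_{z_1}\psi-\frac{\bar\rho_s\bar U_{1s}\bar U_{2s}}{\bar c_s^2}\p_{z_2}\psi$ on $\Gamma^0,\Gamma^1$, whose oblique (but uniformly non-tangential, since $1-\bar M_{1s}^2\ge\lambda_A/(r_1\bar\rho_s)>0$) character must be respected — is what forces the restriction to $\al\in(0,\bar\al]$: the exponent $\bar\al$ comes out of the De~Giorgi--Nash constant and of the oblique-derivative boundary estimate, and the corner sets $\{z_1=r_i\}\cap\{z_3=\pm 1\}$ are dealt with by the reflection in $z_3$ followed by the conormal estimate in the remaining variables.

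The main obstacle will be the boundary regularity at $\Gamma^0\cup\Gamma^1$ for $\psi$: the boundary condition there is of conormal/oblique type rather than Dirichlet or pure Neumann, and it is nonhomogeneous with data $\tilde g_i-\hat F_1$. One must verify that this oblique operator is uniformly transversal to the boundary (which, as noted, follows from $\bar M_{1s}^2<1$ uniformly on $[r_0,r_1]$, guaranteed by Proposition \ref{prop1}), then invoke the Hölder estimate for oblique-derivative problems for divergence-form equations; the compatibility conditions \eqref{cp_FGg} together with the $z_3$-evenness of all coefficients are exactly what is needed to make the reflection across $\Gamma^{w\pm}$ legitimate and preserve the $C^\al$ class across the corners. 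I expect everything else — the $L^\infty$ bounds and the interior oscillation decay — to be routine given the uniform ellipticity \eqref{coeff_posi} and the $L^2$ control from Lemma \ref{lemma}; only the bookkeeping of constants (to see that $C_0$ depends only on the background flow, the data, and $\al$) requires care, and this follows by tracking the dependence through each step since $\delta$ and $\epsilon$ enter the source terms $\hat{\bf F},\hat G,\tilde g_i$ only boundedly.
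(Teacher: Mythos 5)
Your route is genuinely different from the paper's. The paper follows Bae--Duan--Xie \cite{Bae1} and Han--Lin \cite{lh}: it reduces the $C^\al$ bound to a Morrey--Campanato decay estimate for $\int_{B_r(z^\star)\cap\Omega}\left(|\nabla\psi|^2+|\nabla\phi|^2\right)dz$, and proves this decay by a coefficient-freezing / perturbation argument. Near each (possibly corner) boundary point $z^\star$, it compares $(\psi,\phi)$ with solutions $(w_1,w_2)$ of the frozen-coefficient \emph{decoupled} homogeneous problems for $\mathcal{A}_{ij}(z^\star)$ and $\mathcal{B}_{ij}(z^\star)$ respectively, uses the coercivity \eqref{C_al_key} of the frozen quadratic form, and estimates the leftover terms (including the cross-coupling terms $I_2$ and $I_4$) as perturbations of size $O(r^2)$ via H\"older, Poincar\'e, and trace inequalities, before closing with the standard iteration lemma. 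You instead bootstrap: Moser iteration for $L^\infty$ (using $\nabla\psi\in L^2$ as an $L^p$, $p>n/2$, source in the $\phi$-equation, then $\phi\in L^\infty$ as a $\div(L^\infty)$ source in the $\psi$-equation), then scalar De~Giorgi--Nash for $C^\al$, then $W^{1,p}$ to upgrade $\nabla\psi$, then $C^\al$ for $\phi$. Both approaches succeed for the same structural reason — the coupling is lower-order ($\phi$ appears in the $\psi$-equation only in zeroth order inside a divergence, $\nabla\psi$ appears in the $\phi$-equation only in zeroth order) — so the cross terms can be absorbed. Your route is more elementary in that it uses only scalar regularity theory, but it carries more boundary bookkeeping (reflection across $\Gamma^{w\pm}$, De~Giorgi up to a conormal boundary with nonhomogeneous $L^\infty$ data, $W^{1,p}$ estimates up to the corners, a genuine two-step bootstrap), whereas the paper handles the coupled system in a single Campanato pass. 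One point you should make explicit: the reason the conormal boundary condition for $\psi$ on $\Gamma^0\cup\Gamma^1$ stays clean after you push the $\phi$-terms to the right-hand side is that $\phi$ satisfies homogeneous \emph{Dirichlet} conditions there (see \eqref{BC_psi_phi}), so the $\phi$-contribution to the normal flux vanishes on $\Gamma^0\cup\Gamma^1$; without this, the ``decoupled'' conormal data for $\psi$ would itself involve the unknown $\phi$ and the scalar regularity machinery would not apply directly.
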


\begin{proof}
It follows from Theorem 3.1 in \cite{lh}, if there are constants $\ka_0,L_0>0$ satisfying
\begin{equation}
	\int_{B_r(z)\cap\Omega}|\nabla\psi|^2+|\nabla\phi|^2 dz\leq \ka_0^2 r^{1+2\al},
\end{equation}
for all $z\in\Omega$ and $r\in(0,L_0]$,
then one has
\begin{equation*}
	\|(\psi,\phi)\|_{C^{\al}(\bar\Omega)}\leq C
	\left(
	\ka_0+\|(\psi,\phi)\|_{L^2(\Omega)}
	\right).
\end{equation*}
Combining with \eqref{weak} gives the desired estimate \eqref{C_al}. For simplicity, we only consider the most subtle case where $z$ belongs to, without loss of generality, the junction of partial boundary $\Gamma^{w-}$ and $\Gamma^1$.

Fix $z^\star\in \bar\Gamma^{w-}\cap\bar \Gamma^1$ and set $\mathscr{D}_r(z^\star):=\Omega\cap B_r(z^\star)$, where $r>0$ is a constant with $r<\frac{1}{2}\min\{r_1-r_0,1\}$. Let $w_k,\,k=1,2$ be weak solutions of the following frozen coefficients linear problems:

\begin{equation*}
	\begin{cases}
				\begin{aligned}
			&\p_{z_i}(\mathcal{A}_{ij}(z^\star)\p_{z_j}w_1)=0,&\text{in}&\,\, \mathscr{D}_r(z^\star),\\
			&\left(\mathcal{A}_{ij}(z^\star)\p_{z_j}w_1\right)\cdot n_i^{out}=0, &\text{on}&\,\, \p\mathscr{D}_r(z^\star)\cap(\Gamma^{w-}\cap \Gamma^0),\\
						&w_1=\psi, &\text{on}&\,\, \p\mathscr{D}_r(z^\star)\cap\Omega,
		\end{aligned}
	\end{cases}
\end{equation*}
and
\begin{equation*}
	\begin{cases}
		\begin{aligned}
			&\p_{z_i}(\mathcal{B}_{ij}(z^\star)\p_{z_j}w_2)=0,&\text{in}&\,\, \mathscr{D}_r(z^\star),\\
			&\p_{{\bf n}^{out}}w_2=0, &\text{on}&\,\, \p\mathscr{D}_r(z^\star)\cap(\Gamma^{w-}\cap \Gamma^0),\\
			&w_2=\phi, &\text{on}&\,\, \p\mathscr{D}_r(z^\star)\cap\Omega,
		\end{aligned}
	\end{cases}
\end{equation*}
for the outward unit normal vector ${\bf n}^{out}=(n_1^{out},n_2^{out},n_3^{out})$ on $\p\mathscr{D}_r(z^\star)\cap(\Gamma^{w-}\cap\Gamma^0)$. That is to say, $(w_1,w_2)$ satisfy
\begin{equation}\label{def_wk_sol_w1w2}
	\int_{\mathscr{D}_r(z^\star)}
	\mathcal{A}_{ij}(z^\star)\p_{z_j}w_1\p_{z_i} \chi_1
	+\mathcal{B}_{ij}(z^\star)\p_{z_j}w_2\p_{z_i} \chi _2\,dz=0,
\end{equation}
for any test function $\chi_i\in H^1(\mathscr{D}_r(z^\star))\cap\{\chi_i=0\,\,\text{on}\,\,\p\mathscr{D}_r(z^\star)\cap\Omega\},\,i=1,2$. Naturally, we extend $\chi_i$ by setting $\chi_i=0$ in $\Omega\backslash \mathscr{D}_r(z^\star)$, and replacing $(\eta,\xi)$ in \eqref{def_weak_sol} by $(\chi_1,\chi_2)$. Now, subtracting \eqref{def_wk_sol_w1w2} from \eqref{def_weak_sol} gives
\begin{eqnarray}\label{psi_w_est}
\int_{\mathscr{D}_r(z^\star)} \mathcal{A}_{ij}(z^\star)\p_{z_j}(\psi-w_1)\p_{z_i}\chi_1
+\mathcal{B}_{ij}(z^\star)\p_{z_j}(\phi-w_2)\p_{z_i}\chi_2
\,dz=\mathcal{L}(\chi_1,\chi_2)+\sum_{k=1}^{4}I_k,
\end{eqnarray}
where $I_k,k=1,2,3,4$ are defined as
\begin{equation*}
	\begin{aligned}
I_1&=\int_{\mathscr{D}_r(z^\star)}
(\mathscr{A}_{ij}(z^\star)-\mathscr{A}_{ij}(z))
\p_{z_j}\psi\p_{z_i}\chi_1\,dz,\\
I_2&=\int_{\mathscr{D}_r(z^\star)}
\frac{z_1\bar\rho_s\bar U_{1s}}{\bar c_s^2}\phi\p_{z_1}\chi_1
+\frac{\bar\rho_s\bar U_{2s}}{\bar c_s^2}\phi\p_{z_2}\chi_1\,dz,\\
I_3&=\int_{\mathscr{D}_r(z^\star)}
(\mathscr{B}_{ij}(z^\star)-\mathscr{B}_{ij}(z))
\p_{z_j}\phi\p_{z_i}\chi_2\,dz,\\
I_4&=\int_{\mathscr{D}_r(z^\star)}
\left(
\frac{z_1\bar\rho_s}{\bar c_s^2}\phi
-\frac{z_1\bar\rho_s\bar U_{1s}}{\bar c_s^2}\p_{z_1}\psi
-\frac{\bar\rho_s \bar U_{2s}}{\bar c_s^2}\p_{z_2}\psi
\right)
\chi_2\,dz.
	\end{aligned}
\end{equation*}
Set $(\tilde\psi,\tilde\phi)=(\psi-w_1,\phi-w_2)$ and substitute $(\chi_1,\chi_2)=(\tilde\psi,\tilde\phi)$ into \eqref{psi_w_est} to get
\begin{equation}\label{C_al_key}
\int_{\mathscr{D}_r(z^\star)} \mathcal{A}_{ij}(z^\star)\p_{z_j}\tilde\psi\p_{z_i}\tilde\psi
+\mathcal{B}_{ij}(z^\star)\p_{z_j}\tilde\phi\p_{z_i}\tilde\phi
\,dz
\geq C
\int_{\mathscr{D}_r(z^\star)}
|\nabla \tilde\psi|^2+\\|\nabla\tilde\phi|^2\,dz,
\end{equation}
for some positive constant $C$.

Using H\"{o}lder inequality gives
\be\label{est_I1} \begin{split}
	&|I_1|\leq \|\mathscr{A}_{ij}\|_{C^1({\bar\Omega})}
	\left(
	r^2\int_{\mathscr{D}_r(z^\star)}|\nabla\psi|^2\,dz
	\right)^\frac{1}{2}
	\left(
	\int_{\mathscr{D}_r(z^\star)}|\nabla\tilde\psi|^2\,dz
	\right)^\frac{1}{2}\\
	&\quad\,\,\leq r \|\mathscr{A}_{ij}\|_{C^1({\bar\Omega})}
	\|\psi\|_{H^1(\Omega)}
		\left(
	\int_{\mathscr{D}_r(z^\star)}|\nabla\tilde\psi|^2\,dz
	\right)^\frac{1}{2}.
\end{split}  \ee
$I_3$ can be estimated similarly
\begin{eqnarray}\label{est_I3}
	|I_3|\leq\,r \|\mathscr{B}_{ij}\|_{C^1({\bar\Omega})}
	\|\phi\|_{H^1(\Omega)}
	\left(
	\int_{\mathscr{D}_r(z^\star)}|\nabla\tilde\phi|^2\,dz
	\right)^\frac{1}{2}.
\end{eqnarray}
$I_2$ is controlled by
\be\label{est_I2} \begin{split}
	&|I_2|\leq C
	\left(
	\int_{\mathscr{D}_r(z^\star)}|\phi|^2\,dz
	\right)^\frac{1}{2}
	\left(
	\int_{\mathscr{D}_r(z^\star)}|\nabla\tilde\psi|^2\,dz
	\right)^\frac{1}{2}\\
	&\quad\,\,\,\leq C r
	\left(
		\int_{\mathscr{D}_r(z^\star)}|\phi|^6\,dz
	\right)^\frac{1}{6}
	\left(
	\int_{\mathscr{D}_r(z^\star)}|\nabla\tilde\psi|^2\,dz
	\right)^\frac{1}{2}\\
	&\quad\,\,\,\leq C r
	\|\phi\|_{H^1(\Omega)}
	\left(
	\int_{\mathscr{D}_r(z^\star)}|\nabla\tilde\psi|^2\,dz
	\right)^\frac{1}{2}.
\end{split}  \ee
$I_4$ can be established by
\begin{eqnarray}\label{est_I4}
	|I_4|\leq C r
	\left(
	\|\phi\|_{H^1(\Omega)}
	+\|\psi\|_{H^1(\Omega)}
	\right)
	\left(
	\int_{\mathscr{D}_r(z^\star)}|\nabla\tilde\phi|^2\,dz
	\right)^\frac{1}{2},
\end{eqnarray}
where we have used Poincar\'{e} inequality.

In order to derive the estimation of the right-hand side of \eqref{psi_w_est}, it remains to study the linear functional $\mathcal{L}(\tilde\psi,\tilde\phi)$.
Indeed, it can be decomposed as
$$\mathcal{L}(\tilde\psi,\tilde\phi)= \mathcal{L}_1+\mathcal{L}_2+\mathcal{L}_3,$$
where
\begin{equation*}
	\begin{aligned}
\mathcal{L}_1&=\int_{\mathscr{D}_r(z^\star)}
\left(
\hat F_i\p_{z_i}\tilde\psi-\hat G\tilde\phi
\right)\,dz,\\
\mathcal{L}_2&=\int_{\p\mathscr{D}_r(z^\star)\cap\{z_1=r_1\}}(\tilde g_1-\hat F_1)\tilde\psi\,dz_2dz_3
,\\
\mathcal{L}_3&=\int_{\p\mathscr{D}_r(z^\star)\cap\{z_3=-1\}}\hat F_3\tilde\phi\,dz_1dz_2.
	\end{aligned}
\end{equation*}
It's easy to see
\begin{equation}\label{est_L1}
	|\mathcal{L}_1|\leq C r^\frac{3}{2}
	\left(
	\int_{\mathscr{D}_r(z^\star)}|\nabla\tilde\psi|^2+|\nabla\tilde\phi|^2\,dz
	\right)^\frac{1}{2}.
\end{equation}
By using trace theorem and Poincar\'{e} inequality with scaling, one can easily derive the estimates of $\mathcal{L}_k,k=2,3$:
\begin{equation}\label{est_L23}
	\begin{aligned}
		|\mathcal{L}_2|+|\mathcal{L}_3|&\leq Cr
		\left(
		\int_{\p\mathscr{D}_r(z^\star)}|\tilde\psi|^2
		\right)^\frac{1}{2}+
		Cr
		\left(
		\int_{\p\mathscr{D}_r(z^\star)}|\tilde\phi|^2
		\right)^\frac{1}{2}\\
		&\leq Cr^\frac{3}{2}
		\left(
		\int_{\mathscr{D}_r(z^\star)}|\nabla\tilde\psi|^2
		\right)^\frac{1}{2}
		+ Cr^\frac{3}{2}
		\left(
		\int_{\mathscr{D}_r(z^\star)}|\nabla\tilde\phi|^2
		\right)^\frac{1}{2}.
		\end{aligned}
\end{equation}
It follows from \eqref{C_al_key},\eqref{est_I1}-\eqref{est_I4} and \eqref{est_L1}-\eqref{est_L23}  that
\begin{equation}
	\int_{\mathscr{D}_r(z^\star)}
	|\nabla \tilde\psi|^2+\\|\nabla\tilde\phi|^2\,dz
	\leq C
	\left(
	\|(\psi,\phi)\|_{H^1(\Omega)}r^2+r^{3}
	\right)\leq C(C_H\,r^2 +r^3),
\end{equation}
where $C_H$ is a constant given by the right-hand side of \eqref{weak}.

 By adapting the proof of Lemma 4.12 in \cite{lh}, one can show that there exists two constants $\hat\al\in(0,1)$ and $\hat C>0$ depending only on the data such that $(w_1,w_2)$ satisfy
 \begin{equation}
 	\int_{\mathscr{D}_{\rho}(z^\star)}
 	|\nabla w_1|^2+|\nabla w_2|^2\,dz\leq \hat C(\frac{\rho}{r})^{1+2\hat\al}
 \int_{\mathscr{D}_{r}(z^\star)}|\nabla w_1|^2+|\nabla w_2|^2\,dz,
 \end{equation}
whenever $0<\rho\leq r$.

Hence, we have
 \begin{equation}
	\int_{\mathscr{D}_{\rho}(z^\star)}
	|\nabla \psi|^2+|\nabla \phi|^2\,dz\leq \hat C(\frac{\rho}{r})^{1+2\hat\al}
	\int_{\mathscr{D}_{r}(z^\star)}|\nabla \psi|^2+|\nabla \phi|^2\,dz+2C(C_H+1)^2r^2.
\end{equation}

Finally, for both the cases $1+2\hat\al\leq 2$ and $1+2\hat\al> 2$, by adapting the proof of Lemma 3.4 in \cite{lh}, one can show that there exists constants $\bar\al\in(0,\hat\al)$ and $L_0$ such that for any $r\in(0,L_0]$,
$(\psi,\phi)$ satisfy
\begin{equation}
	\int_{\mathscr{D}_r(z^\star)}|\nabla\psi|^2+|\nabla\phi|^2\,dz
\leq C r^{1+2\bar\al},
\end{equation}
for any $z^\star\in \bar\Gamma^{w-}\cap\bar \Gamma^1$. Thus, the proof is completed.
\end{proof}

In order to further establish uniform $C^{k,\al},k=1,2$ estimates of $(\psi,W_6)$. One can view the system \eqref{psi_W6} as two individual elliptic equations by leaving only the principal term on the left-hand side of the equations, and use symmetric extension to exclude the possible singularities that may appear at the circles $$\{(r,\th,x_3):r=r_0\,\,\text{or}\,\,r=r_1,\,\th\in T_{2\pi},\,x_3=\pm 1\}.$$
This is possible since that the principal parts of \eqref{psi_W6} are decoupled. For this, we introduce the enlarged domain $\Omega_e,\Gamma^0_e,\Gamma^1_e$ as
\begin{equation}\label{yantuo}
	\begin{aligned}
		\Omega_e&=\{(r,\th,x_3):r_0<r<r_1,\th\in\mathbb{T}_{2\pi},-3<x_3<3\},\\
		\Gamma_e^{0}&=\{(r,\th,x_3):r=r_0,\th\in\mathbb{T}_{2\pi},-3<x_3<3\},\\
		\Gamma_e^{1}&=\{(r,\th,x_3):r=r_1,\th\in\mathbb{T}_{2\pi},-3<x_3<3\},
	\end{aligned}
\end{equation}
and define the extended functions  on $\Omega_e$ as
\begin{equation}\label{exten}
	\begin{aligned}
		\begin{split}
			(U_1^e, &U_2^e, U_3^e, A^e, K^e, \Phi^e)\\
			=&\begin{cases}
				(U_1,U_2,U_3,A,K,\Phi)(r,\theta,x_3),\,\,
				(r,\theta,x_3)\in[r_0,r_1]\times[0,2\pi)\times[-1,1],\\
				(U_1,U_2,-U_3,A,K,\Phi)(r,\theta,-2-x_3),\,\,
				(r,\theta,x_3)\in[r_0,r_1]\times[0,2\pi)\times[-3,-1],\\
				(U_1,U_2,-U_3,A,K,\Phi)(r,\theta,2-x_3),\,\,
				(r,\theta,x_3)\in[r_0,r_1]\times[0,2\pi)\times[1,3].
			\end{cases}
		\end{split}
	\end{aligned}
\end{equation}

We may also extend $(m_{en},\omega_{en},K_{en},A_{en},\Phi_{en})$, $(m_{ex},\Phi_{ex})$ and $\tilde b$ to $\Gamma^0_{e},\Gamma^1_{e}$ and $\Omega_e$ respectively, which are still denoted by $(m_{en},\omega_{en},K_{en},A_{en},\Phi_{en},m_{ex},\Phi_{ex})$
and $\tilde b$. Thanks to the compatibility conditions \eqref{cp1} and \eqref{cp2}, $(U_1^e, U_2^e, U_3^e, A^e, K^e, \Phi^e)$ still satisfy the steady Euler-Poisson equations on the larger domain $\Omega_e$.

In the end, we obtain the following estimate
\begin{eqnarray}
	\begin{split}	
		\|(\psi, W_6)\|_{C^{2,\al}(\bar\Omega)}
		\leq C
		\bigg(
		&\sum_{i=1}^{3}\|\tilde F_i\|_{C^{1,\al}(\bar \Omega)}
		+\|\tilde G\|_{C^{\al}(\bar \Omega)}
		+\|g_0\|_{C^{1,\al}(\bar \Gamma^{0})}
		+\|g_1\|_{C^{1,\al}(\bar \Gamma^{1})}\\
		+&\epsilon\|\Phi_{en}\|_{C^{2,\al}(\bar \Gamma^{0})}
		+\epsilon\|\Phi_{ex}\|_{C^{2,\al}(\bar \Gamma^{1})}
		\bigg).
	\end{split}
\end{eqnarray}
For more details, one can refer to section 3.3 and 3.4 of \cite{Bae1}.


As a consequence, we obtain a $C^{1,\al}$ smooth solution
\begin{equation*}
	W_1=\p_{z_1}\psi+\tilde W_1,\,
	W_2=\frac{1}{z_1}\p_{z_2}\psi+\tilde W_2,\,
	W_3=\p_{z_3}\psi+\tilde W_3,\,
	W_6=\phi+\tilde W_6,\,
\end{equation*}
for the system \eqref{def_curl}-\eqref{bc_def_curl}.

We now proceed by setting $v_1=W_1,v_2=rW_2,v_3=W_3,v_4=W_6$, then the system \eqref{def_curl}-\eqref{bc_def_curl} can be rewritten as the following system
\begin{equation}\label{sys_v}
	\begin{cases}
		\sum _{i,j=1}^{3}\p_{z_i}(\mathcal{A}_{ij}v_j)
		+\p_{z_1}\left(\frac{z_1\bar\rho_s\bar U_{1s}}{\bar c_s^2} v_4\right)
		+\p_{z_2}\left(\frac{\bar\rho_s\bar U_{2s}}{\bar c_s^2} v_4\right)
		=f,\,\text{in}\,\Omega,\\
		\curl {\bf v}={\bf g},\,\text{in}\,\Omega,\\
		\sum _{i,j=1}^{3}\p_{z_i}(\mathcal{B}_{ij}\p_{z_j}v_4)
		+\frac{z_1\bar\rho_s \bar U_{1s}}{\bar c_s^2} v_1
		+ \frac{\bar\rho_s \bar U_{2s}}{\bar c_s^2} v_2
		-\frac{z_1\bar\rho_s}{\bar c_s^2}v_4=h,\,\text{in}\,\Omega,
	\end{cases}
\end{equation}
with boundary conditions
\begin{equation}\label{bc_v}
	\begin{cases}
		\sum_{j=1}^{3}\mathcal{A}_{1j}v_j \big|_{z_1=r_k}=b_k,\,k=0,1,\\
		v_3(z_1,z_2,\pm 1)=0,\\
		v_4(z_1,z_2,z_3)\big|_{z_1=r_k}=d_k,\,k=0,1,\\
		\p_{z_3}v_4(z_1,z_2,\pm 1)=0,
	\end{cases}
\end{equation}	
where
${\bf v}=(v_1,v_2,v_3)$, the matrix $\mathcal{A}_{ij},\,\mathcal{B}_{ij}$ is defined in \eqref{elip_coeff} satisfying \eqref{coeff_posi}. The vector field ${\bf g}$ satisfies the divergence free condition $\sum_{j=1}^{3}\p_{z_j}g_j=0$ in $\Omega$. In the following, we will verify some a priori estimates for the deformation-curl-Poisson system, the same estimates for the div-curl system is well-known, one can refer to \cite{abdg98, cs17, ky09}. Indeed, by introducing an enlarge system which is elliptic and the corresponding boundary conditions are complementing in the sense of Agmon-Dougalis-Nirenberg \cite{Nirenberg1}, we can further improve the regularity of the solution $(W_1, W_2, W_3, W_6)$.
\begin{lemma}\label{prior}
	Suppose $\al\in (0,1)$ and integer $m\geq 2$.
Let $({\bf v},v_4)\in (C^{1,\al}(\Omega))^3 \times C^{2,\al}(\Omega)$ be a classical solution to \eqref{sys_v}-\eqref{bc_v}, if $(f,g)\in C^{m-1,\al}(\Omega),\,h\in C^{m-2,\al}(\Omega)$, and $b_k,d_k\in C^{m,\al}(\Omega),\,k=0,1$, then $({\bf v},v_4)\in (C^{m,\al}(\Omega))^4$ satisfying the following estimates
	\begin{eqnarray}\label{est_v}
		\sum_{j=1}^{4}\|v_j\|_{C^{m,\al}(\Omega)}\leq C_2
		\left(
		\|(f,g)\|_{C^{m-1,\al}(\Omega)}+
		\|h\|_{C^{m-2,\al}(\Omega)}
		+\sum_{k=0}^{1}\|(b_k,d_k)\|_{C^{m,\al}(\p\Omega)}
		\right),
\end{eqnarray}
where $C_2$ is a positive constant.
\end{lemma}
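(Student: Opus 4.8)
The plan is to follow the strategy of Agmon--Douglis--Nirenberg as adapted to div-curl-elliptic systems, realizing \eqref{sys_v}--\eqref{bc_v} as (the velocity part of) a larger elliptic system with complementing boundary conditions, and then bootstrapping. First I would rewrite the curl equation $\curl{\bf v}={\bf g}$ together with a divergence equation: since $\sum_{j}\p_{z_j}g_j=0$ in $\Omega$, one can augment the first-order system for ${\bf v}$ by imposing $\div{\bf v}=\chi$ for an auxiliary scalar unknown $\chi$, so that the pair $({\bf v},\chi)$ solves a first-order elliptic (Hodge-type) system. Concretely, $\Delta v_k = (\curl{\bf g})_k + \p_{z_k}\chi$ componentwise, so after this augmentation each $v_k$ satisfies a second-order elliptic equation whose right-hand side involves one derivative of $({\bf g},\chi)$; the boundary conditions $\sum_j\mathcal{A}_{1j}v_j|_{z_1=r_k}=b_k$, $v_3|_{z_3=\pm1}=0$, together with the natural conditions on $\chi$ inherited from the structure of the div-curl problem (see \cite{abdg98, cs17, ky09}) form a complementing boundary system in the ADN sense. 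The scalar equation for $v_4$ in \eqref{sys_v} is already a genuine second-order elliptic equation with oblique/Dirichlet data $v_4|_{z_1=r_k}=d_k$, $\p_{z_3}v_4|_{z_3=\pm1}=0$, so it fits the same framework; the zeroth-order coupling terms $\tfrac{z_1\bar\rho_s\bar U_{1s}}{\bar c_s^2}v_1+\tfrac{\bar\rho_s\bar U_{2s}}{\bar c_s^2}v_2-\tfrac{z_1\bar\rho_s}{\bar c_s^2}v_4$ and $\p_{z_1}(\cdots v_4)+\p_{z_2}(\cdots v_4)$ are treated as lower-order perturbations.

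Next I would invoke the ADN a priori Schauder estimates for this enlarged elliptic system on $\Omega$: since all coefficients $\mathcal{A}_{ij},\mathcal{B}_{ij}$ and the $\bar c_s$-weights are $C^{\infty}$ functions of $z_1$ alone (hence as smooth as needed), and since the domain $\Omega$ after the symmetric extension \eqref{yantuo}--\eqref{exten} across $z_3=\pm1$ has no corner singularities at $\{z_3=\pm1\}$, the standard estimate gives, for the extended solution on $\Omega_e$,
\begin{equation*}
\sum_{j=1}^{4}\|v_j\|_{C^{m,\al}}\leq C\Big(\|(f,g,\chi)\|_{C^{m-1,\al}}+\|h\|_{C^{m-2,\al}}+\sum_{k=0}^{1}\|(b_k,d_k)\|_{C^{m,\al}}+\sum_j\|v_j\|_{L^\infty}\Big),
\end{equation*}
and the lower-order term $\sum_j\|v_j\|_{L^\infty}$ can either be absorbed (if the kernel of the homogeneous problem is trivial) or kept, depending on the formulation; in our situation the coercivity already established in Lemma \ref{lemma} via the cancellation of the mixed terms, together with the $C^\al$ estimate of Lemma \ref{C_al_est}, controls this term. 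The auxiliary unknown $\chi$ is not free: it is determined by the compatibility of the div-curl system and satisfies $\|\chi\|_{C^{m-1,\al}}\leq C(\|g\|_{C^{m-1,\al}}+\sum_k\|b_k\|_{C^{m,\al}})$, so it does not enlarge the right-hand side of \eqref{est_v}. Finally, restricting the estimate from $\Omega_e$ back to $\Omega$ and using that the extended data have norms on $\Omega_e,\Gamma^0_e,\Gamma^1_e$ bounded by constant multiples of the original data norms on $\Omega,\Gamma^0,\Gamma^1$ (by the compatibility conditions that make the extension $C^{m,\al}$), one obtains \eqref{est_v} with a constant $C_2$ depending only on the background flow, $\al$, $m$, and the geometry. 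The bootstrapping from the $C^{1,\al}\times C^{2,\al}$ regularity of the given classical solution up to $C^{m,\al}$ is then immediate: once the right-hand sides are known to lie in $C^{m-1,\al}$ (resp. $C^{m-2,\al}$), the ADN estimate upgrades the solution by one order at a time.

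The main obstacle, as usual for such div-curl-Poisson systems on a concentric cylinder, is twofold. First, one must verify that the boundary operators genuinely satisfy the Lopatinskii--Shapiro (complementing) condition for the augmented elliptic system, not merely for the Poisson equation in isolation; this is where the precise choice of boundary data on the div-curl block matters, and one checks it by freezing coefficients and computing the symbol at each boundary portion $\Gamma^0,\Gamma^1,\Gamma^{w\pm}$, exactly as in \cite{abdg98, cs17, ky09, Nirenberg1}. Second, the presence of the nontrivial one-dimensional cohomology space $K_T(\Omega)$ (the domain is not simply connected) means the div-curl subproblem is solvable only up to that kernel, so the a priori estimate must be stated for the representative orthogonal to $K_T(\Omega)$, as was already arranged when $(\tilde W_1,\tilde W_2,\tilde W_3)$ was constructed; since $K_T(\Omega)$ consists of smooth fields, this does not affect the $C^{m,\al}$ bound. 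Once these two points are in place, the rest is the routine ADN machinery plus the symmetric-extension trick to kill the would-be edge singularities at $\{r=r_0\text{ or }r_1,\ x_3=\pm1\}$, and the proof is complete.
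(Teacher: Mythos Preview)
Your overall strategy---augment to a determined elliptic system, verify ellipticity and the complementing condition in the Agmon--Douglis--Nirenberg sense, then bootstrap and absorb the lower-order term by uniqueness---matches the paper's. The augmentation itself differs: you adjoin $\div{\bf v}=\chi$ with an auxiliary scalar $\chi$, whereas the paper adjoins a scalar $N$ by replacing $\curl{\bf v}={\bf g}$ with $\curl{\bf v}+\nabla N={\bf g}$, together with the boundary conditions $N|_{z_1=r_k}=0$ and $\p_{z_3}N|_{z_3=\pm1}=0$. Since $\div{\bf g}=0$ this forces $\Delta N=0$ with those data, hence $N\equiv0$, so the enlarged $5\times5$ system for $(v_1,v_2,v_3,v_4,N)$ is \emph{equivalent} to \eqref{sys_v}--\eqref{bc_v}. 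The paper then computes the $5\times5$ characteristic matrix explicitly (ADN weights $\nu_1=\nu_2=\nu_3=\nu_5=1$, $\nu_4=2$), finds its determinant $|\xi|^2\big(\sum\mathcal{A}_{ij}\xi_i\xi_j\big)\big(\sum\mathcal{B}_{ij}\xi_i\xi_j\big)\geq\lambda_A\lambda_B|\xi|^6$, and checks the complementing condition at each boundary face by solving the frozen-coefficient half-space ODE system by hand; the zeroth-order remainder $\sum_j\|v_j\|_{C^0}$ is then removed via the uniqueness from Lemma~\ref{lemma}, exactly as you anticipate.

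The one point that needs care in your version is the bookkeeping of equations versus unknowns: \eqref{sys_v} already has five scalar equations (one weighted-divergence line, three curl components, one Poisson equation) for the four unknowns $(v_1,v_2,v_3,v_4)$, so adjoining $\div{\bf v}=\chi$ yields six equations for five unknowns, still overdetermined by one; and since the first line of \eqref{sys_v} is $\p_{z_i}(\mathcal{A}_{ij}v_j)+\cdots$ rather than the Euclidean divergence, it cannot simply be identified with your $\div{\bf v}=\chi$. Your claimed bound $\|\chi\|_{C^{m-1,\al}}\leq C(\|g\|_{C^{m-1,\al}}+\sum_k\|b_k\|_{C^{m,\al}})$ is therefore not immediate. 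The paper's $+\nabla N$ trick sidesteps this cleanly: it adds exactly one unknown, keeps the equation count, and makes both the ellipticity and the complementing verifications completely explicit. Your remarks on the symmetric extension across $z_3=\pm1$ and on the harmless role of $K_T(\Omega)$ for the a priori estimate are correct and in line with the paper.
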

\begin{proof}
In order to balance the number of the unknown functions and the equations in \eqref{sys_v}-\eqref{bc_v}, we consider an auxiliary system
\begin{equation}\label{aux_sys_v}
	\begin{cases}
		\sum _{i,j=1}^{3}\p_{z_i}(\mathcal{A}_{ij}v_j)
		+\p_{z_1}\left(\frac{z_1\bar\rho_s\bar U_{1s}}{\bar c_s^2} v_4\right)
		+\p_{z_2}\left(\frac{\bar\rho_s\bar U_{2s}}{\bar c_s^2} v_4\right)
		=f,\,\text{in}\,\,\Omega,\\
		\curl{\bf v}+\nabla N={\bf g},
		\,\text{in}\,\,\Omega,\\
		\sum _{i,j=1}^{3}\p_{z_i}(\mathcal{B}_{ij}\p_{z_j}v_4)
		+\frac{z_1\bar\rho_s \bar U_{1s}}{\bar c_s^2} v_1
		+ \frac{\bar\rho_s \bar U_{2s}}{\bar c_s^2} v_2
		-\frac{z_1\bar\rho_s}{\bar c_s^2}v_4=h,
		\,\text{in}\,\,\Omega,
	\end{cases}
\end{equation}
with boundary conditions
\begin{equation}\label{aux_bc_v}
	\begin{cases}
		\sum_{j=1}^{3}\mathcal{A}_{1j}v_j \big|_{z_1=r_k}=b_k,\,k=0,1,\\
		v_3(z_1,z_2,\pm 1)=0,\\
		v_4(z_1,z_2,z_3)\big|_{z_1=r_k}=d_k,\,k=0,1,\\
		\p_{z_3}v_4(z_1,z_2,\pm 1)=0,\\
		N(z_1,z_2,z_3)\big|_{z_1=r_k}=0,\,k=0,1,\\
		\p_{z_3}N(z_1,z_2,\pm 1)=0.
	\end{cases}
\end{equation}	
Since $\div {\bf g}=0$,\, we have $\Delta N=0$ in $\Omega$. Combining with the boundary conditions of $N$ on $\partial\Omega$, one can easily derive $N\equiv 0$ in $\Omega$. It follows that the system \eqref{aux_sys_v}-\eqref{aux_bc_v} is equivalent to \eqref{sys_v}-\eqref{bc_v}. Considering the unknown vector field $(v_1,v_2,v_3,v_4,v_5=N)$, we may take
\begin{equation*}
	\begin{split}
		\mu_1&=\mu_2=\mu_3=\mu_4=\mu_5=0,\\
		\nu_1&=\nu_2=\nu_3=\nu_5=1,\,\nu_4=2,
	\end{split}
\end{equation*}
where $\mu_i,\nu_j,\,i,j=1,2\cdots 5$ respectively represent the weights to the $i$-th equation and to the $j$-th unknown functions. Then, the characteristic matrix $\{l_{ij}(\xi)\}_{i,j=1}^{5}$ has the form
\begin{eqnarray}
	\{l_{ij}(\xi)\}_{i,j}=
\left(
\begin{array}{ccccc}
	\sum_{i=1}^{3}\mathcal{A}_{i1}\xi_i &
	\sum_{i=1}^{3}\mathcal{A}_{i2}\xi_i &
	\sum_{i=1}^{3}\mathcal{A}_{i3}\xi_i &
	0&0\\
	0&-\xi_3&\xi_2&0&\xi_1\\
	\xi_3&0&-\xi_1&0&\xi_2\\
	-\xi_2&\xi_1&0&0&\xi_3\\
	0&0&0&\sum_{i=1}^{3}\mathcal{B}_{ij}\xi_i\xi_j&0
\end{array}
\right)
\end{eqnarray}
with determinant $L(\xi)=|\xi|^2\sum_{i,j=1}^{3}\mathcal{A}_{ij}\xi_{i}\xi_{j}
\sum_{i,j=1}^{3}\mathcal{B}_{ij}\xi_{i}\xi_{j}\geq \lambda_A\lambda_B|\xi|^6$, which suggests that  the system \eqref{aux_sys_v} is elliptic in the sense of Douglis, Nirenberg \cite{Nirenberg1}.

In order to verify the  conditions are complementing, we let ${\bf V}(z)=e^{i(\xi_1z_1+\xi_2z_2)}{\bf W}(z_3)$, which solves the following system
\be\label{system47}
\begin{cases}
\sum_{j, k=1}^3\mathcal{A}_{jk}\p_{x_j}v_k=0, \quad\quad\, \mbox{in}\,\mathbb{R}_+^3,\\
\curl {\bf v}+\nabla N=0, \quad\quad\quad\quad\, \mbox{in}\,\mathbb{R}_+^3,\\
\Delta v_4=0, \quad\quad \quad\quad\quad\quad\quad\quad\mbox{in}\,\mathbb{R}_+^3,\\
(\mathcal{A}_{11}v_1+\mathcal{A}_{12}v_2+\mathcal{A}_{13}v_3)|_{z_1=r_k}=0, \quad\quad k=0,1,\\
N|_{z_1=r_k}=v_4|_{z_1=r_k}=0, \quad\quad\quad\quad\quad\quad\quad\,\, k=0,1,\\
(\p_{z_3}v_4, \p_{z_3}N)(z_1, z_2, \pm 1)=0,
\end{cases}
\ee
the next goal is to prove that ${\bf V}\rightarrow0$ as $z_3\rightarrow0$. Direct computations yields
\begin{equation} \label{system48}
\begin{cases}
i\xi_1\mathcal{A}_{11}w_1(z_3)+i\mathcal{A}_{12}(\xi_1w_2(z_3)+\xi_2w_1(z_3))+\mathcal{A}_{13}(i\xi_1w_3(z_3)+w_1'(z_3))\\
\quad\quad+\mathcal{A}_{23}(i\xi_2w_3(z_3)+w_2'(z_3))+i\mathcal{A}_{22}\xi_{2}w_2+\mathcal{A}_{33}w_3'(z_3)=0,\\
i\xi_2w_3(z_3)-w_2'(z_3)+i\xi_1w_5(z_3)=0,\\
w_1'(z_3)-i\xi_1w_3(z_3)+i\xi_2w_5(z_3)=0,\\
i\xi_1w_2(z_3)-i\xi_2w_1(z_3)+w_5'(z_3)=0,\\
-(\xi_1^2+\xi_2^2)w_4(z_3)+w_4''(z_3)=0,\\
(\mathcal{A}_{13}w_1+\mathcal{A}_{32}w_2+\mathcal{A}_{33}w_3)(0)=0,\\
w_4(0)=w_5(0)=0.
\end{cases}
\end{equation}
It is easy to derive
\be\label{system49}
\begin{cases}
w_1'(z_3)=i(\xi_1w_3-\xi_2w_5)(z_3),\\
w_2'(z_3)=i(\xi_2w_3+\xi_1w_5)(z_3).
\end{cases}
\ee
In terms of $i\xi_1w_2(z_3)-i\xi_2w_1(z_3)+w_5'(z_3)=0$, one can differentiate it with respect to $z_3$ to get
$$
w_5''(z_3)=(\xi_1^2+\xi_2^2)w_5(z_3)=0,
$$
which motivates that
$$
w_5(z_3)=C_1e^{-\sqrt{\xi_1^2+\xi_2^2}z_3}+C_2e^{\sqrt{\xi_1^2+\xi_2^2}z_3}.
$$
Since $w_5(0)=0$, and $w_5(z_3)\rightarrow0$ as $z_3\rightarrow0$, one has $C_1=C_2=0$, and $w_5\equiv0$. Similarly, $w_4\equiv0$. Then \eqref{system48} can be reduced to
\be\label{system410}
\begin{cases}
w_2'(z_3)=i\xi_2w_3(z_3),\\
w_1'(z_3)=i\xi_1w_3(z_3),\\
\xi_1w_2(z_3)=\xi_2w_1(z_3).
\end{cases}
\ee
Taking \eqref{system} into the first equation of \eqref{system48} and taking its derivative with respect to $z_3$, we obtain
\be\label{system412}
\mathcal{A}_{33}w_3''(z_3)+2i(\mathcal{A}_{13}\xi+\mathcal{A}_{23}\xi_2)w_3'(z_3)
-(\mathcal{A}_{11}\xi_1^2+2\mathcal{A}_{12}\xi_1\xi_2+\mathcal{A}_{22}\xi_2^2)w_3(z_3)=0.
\ee
Note that $\mathcal{A}_{ij}$ is positive symmetric, the real part of \eqref{system412} gives
$$
w_3''(z_3)-\frac{\mathcal{A}_{11}\xi_1^2+2\mathcal{A}_{12}\xi_1\xi_2+\mathcal{A}_{22}\xi_2^2}{\mathcal{A}_{33}}w_3(z_3)=0,
$$
from which we see
$$
w_3(z_3)=C_1 e^{-\sqrt{\frac{\Lambda}{\mathcal{A}_{33}}}z_3}+C_2 e^{\sqrt{\frac{\Lambda}{\mathcal{A}_{33}}}z_3},
$$
where
$ \Lambda= \mathcal{A}_{11}\xi_1^2+2\mathcal{A}_{12}\xi_1\xi_2+\mathcal{A}_{22}\xi_2^2$. Owing to $w_3(z_3)\rightarrow0$ as $z_3\rightarrow0$, we conclude that $C_2=0$. Then it follows \eqref{system48} that
\be\label{system414}
\begin{cases}
i\xi_1\mathcal{A}_{11}w_1(0)+i(2\mathcal{A}_{12}\xi_1+\mathcal{A}_{22}\xi_2)w_2(0)-\sqrt{\mathcal{A}_{33}\Lambda}w_3(0)=0,\\
-\xi_2w_1(0)+\xi_1w_2(0)=0,\\
\mathcal{A}_{13}w_1(0)+\mathcal{A}_{23}w_2(0)+\mathcal{A}_{33}w_3(0)=0.
\end{cases}
\ee
Since
\begin{align*}
\left|
\begin{array}{ccc}
i\xi\mathcal{A}_{11} & i(2\mathcal{A}_{12}\xi+\mathcal{A}_{22}\xi_2) & -\sqrt{\mathcal{A}_{33}\Lambda}\\
-\xi_2 & \xi_1 & 0\\
\mathcal{A}_{13} & \mathcal{A}_{23} & \mathcal{A}_{33}
\end{array}
\right|
=i\mathcal{A}_{33}\Lambda+(\mathcal{A}_{32}\xi_2+\mathcal{A}_{31}\xi_2)\sqrt{\mathcal{A}_{33}\Lambda}\neq0,
\end{align*}
then $w_j(0)=0$, $j=1,2 ,3$, which indicates $w_j(z_3)\equiv0$ for $j=1,2,3$. Hence the boundary conditions are complementing in the sense of \cite{Nirenberg1}. Then we have
\be\label{est_vv} \begin{split}
		&\sum_{j=1}^{4}\|v_j\|_{C^{m,\al}(\Omega)}\\
&\leq C_2
		\left(
		\|(f,g)\|_{C^{m-1,\al}(\Omega)}+
		\|h\|_{C^{m-2,\al}(\Omega)}
		+\sum_{k=0}^{1}\|(b_k,d_k)\|_{C^{m,\al}(\p\Omega)}+\sum_{j=1}^4||v_j||_{C^0(\Omega)}
		\right).
\end{split} \ee
By a standard contradiction argument and the similar uniqueness of system \eqref{sys_v} and \eqref{bc_v}, one can eliminate the zeroth order term in \eqref{est_vv}. Then the uniqueness can be verified similarly as in lemma \ref{lemma}. Hence, the proof of Lemma \ref{prior} is completed.
\end{proof}

By establishing the interior estimates on $\Omega_e$ with the aid of Lemma \ref{prior}, we obtain the following global estimates on $\Omega$ :
\begin{align*}
		\sum_{j=1}^{3}&\|W_j\|_{C^{2,\al}(\bar\Omega)}
		+\|W_6\|_{C^{2,\al}(\bar\Omega)}\\
&\leq C
	\bigg(
	\sum_{i=1}^{3}\|F_i\|_{C^{2,\al}(\bar\Omega)}
	+\sum_{i=4}^5||W_i||_{C^{2, \alpha}(\bar\Omega)}
	+\sum_{i=1}^3||\omega_i||_{C^{1, \alpha}(\bar\Omega)}\\
	&\quad+\|G\|_{C^{\al}(\bar\Omega)}
	+\|g_0\|_{C^{1,\al}(\bar \Gamma_{0})}
	+\|g_1\|_{C^{1,\al}(\bar \Gamma_{1})}
	+\epsilon\|\Phi_{en}\|_{C^{2,\al}(\bar \Gamma_{0})}
	+\epsilon\|\Phi_{ex}\|_{C^{2,\al}(\bar \Gamma_{1})}
	\bigg)\\
	&\leq C(\epsilon+\delta^2+\epsilon\delta).
\end{align*}

Up to now, for any fixed ${\bf W}^\sharp\in \mathcal{S}_{\delta}$, we have constructed a new vector ${\bf W}\in C^{2,\al}(\bar\Omega)$ with
\begin{equation}
	\sum_{j=1}^{6}\|W_j\|_{C^{2,\al}(\bar \Omega)}\leq C_*(\epsilon+\delta^2+\epsilon\delta).
\end{equation}

It remains to check the compatibility conditions of $(W_1,W_2,W_3,W_6)$. The boundary conditions \eqref{bc_def_curl}  imply that
\begin{equation}\label{cp_W3W6}
	W_3(r,\th,\pm 1)=\p_{x_3}W_6(r,\th,\pm 1)=0.
\end{equation}
The second and third equations in \eqref{def_curl} combined with the compatibility conditions of vorticity \eqref{cp_omega1}, \eqref{cp_omega23} leads to the compatibility conditions for $W_1,W_2$
\begin{equation}\label{cp_W1W2}
	\p_{x_3}(W_1,W_2)(r,\th,\pm 1)=0.
\end{equation}
Differentiating the first equation in \eqref{def_curl} with respect to $x_3$, and restricting the resulting equations on $\Gamma^{w\pm}$, we get
\begin{equation}\label{cp_W3}
	\p_{x_3}^2W_3(r,\th,\pm 1)=0,
\end{equation}
where we have used \eqref{cp_W4W5},\eqref{cp_tilde123},\eqref{cp_FGg} and \eqref{cp_W3W6}-\eqref{cp_W1W2}.

Selecting $\delta=2 C_*\epsilon $ so that
\begin{equation}
	\|{\bf W}\|_{C^{2,\al}(\bar \Omega)}\leq C_*\epsilon +C_*(2 C_*+1)\epsilon\delta<\delta,
\end{equation}
for any $\epsilon<\epsilon_*:=\frac{1}{2C_*(1+2C_*)}$. Moreover, $W_k,k=1,2,\cdots6$ satisfy the corresponding compatibility conditions and ${\bf W}\in \mathcal{S}_\delta$. Next, it is natural for us to construct a mapping $\mathcal{T}:\mathcal{S}_\delta\rightarrow\mathcal{S}_{\delta}$ by
$$\mathcal{T} {\bf W}^\sharp={\bf W}.$$
It remains to confirm that the mapping $\mathcal{T}$ is a contraction in $\mathcal{S}_{\delta}$. To this end, one may set
$${\bf W}^{\sharp, k}=(W_{1}^{\sharp, k}, W_{2}^{\sharp, k}, W_{3}^{\sharp, k}, W_{4}^{\sharp, k}, W_{5}^{\sharp, k}, W_{6}^{\sharp, k}),\,k=1,2,$$
from which we see ${\bf W}^k=\mathcal{T}{\bf W}^{\sharp, k}$. Define
\begin{equation*}
	{\bf Y}^\sharp={\bf W}^{\sharp, 1}-{\bf W}^{\sharp, 2},\,
	{\bf Y}={\bf W}^1-{\bf W}^2.
\end{equation*}

We next undertake to estimate ${\bf Y}$. In view of \eqref{eq_W4} and \eqref{eq_W5}, we first derive the equations for $Y_4$ and $Y_5$:
\begin{eqnarray}\label{eq_Y4}
	\begin{cases}
      \left(\p_r
      +\frac{W_{2}^{\sharp, 1}+\bar U_2}{W_{1}^{\sharp, 1}+\bar U_1}\frac{1}{r}\p_{\theta}
		+\frac{W_{3}^{\sharp, 1}}{W_{1}^{\sharp, 1}+\bar U_1}\p_{x_3}
		\right)Y_4\\
\quad\quad\quad\quad\quad\quad=-(R_2^{\sharp, 1}-R_2^{\sharp, 2})\frac{1}{r}\p_{\theta}W_{4}^2
-(R_3^{\sharp, 1}-R_3^{\sharp, 2})\p_{x_3}W_{4}^2,\\
		Y_4(r_0,\theta,x_3)=0,
	\end{cases}
\end{eqnarray}
and
\begin{eqnarray}\label{eq_Y5}
	\begin{cases}
		\left(\p_r
		+\frac{W_{2}^{\sharp, 1}+\bar U_{2s}}{W_{1}^{\sharp, 1}+\bar U_{1s}}\frac{1}{r}\p_{\theta}
		+\frac{W_{3}^{\sharp,1}}{W_{1}^{\sharp,1}+\bar U_{1s}}\p_{x_3}
		\right)Y_5\\
\quad\quad\quad\quad\quad\quad=-(R_2^{\sharp, 1}-R_2^{\sharp, 2})\frac{1}{r}\p_{\theta}W_{5}^2
-(R_3^{\sharp, 1}-R_3^{\sharp, 2})\p_{x_3}W_{5}^2,\\
		Y_5(r_0,\theta,x_3)=0,
	\end{cases}
\end{eqnarray}
where
\begin{eqnarray*}
R_j^{\sharp, k}=R_j^\sharp(r,\th,x_3;{\bf W}^{\sharp,k}),k=1,2,j=1,2,3.
\end{eqnarray*}
One can adopt the characteristic method to deduce that
\begin{equation}\label{est_diff_W4W5} \begin{split}
	\|(Y_4,Y_5)\|_{C^{1,\al}(\bar \Omega)}&\leq C\|{\bf W}^2\|_{C^{2,\al}(\bar \Omega)}
	\sum_{j=1}^{3}\|{\bf Y}^\sharp\|_{C^{1,\al}(\bar \Omega)}\\
&\leq C\delta||{\bf Y}^{\sharp}||_{C^{1,\al}(\bar \Omega)}.
\end{split} \end{equation}

We estimate the vorticity in the similar way. Denote ${\bf V}^k=(W_{1}^k,W_{2}^k,W_{3}^k),\,k=1,2$, and let
$\W_j=\omega_{j}^1-\omega_{j}^2,\,j=1,2,3$ with
\begin{eqnarray}\label{eq_diff_omg23}
\begin{split}
\W_2&=\frac{1}{W_{1}^{\sharp,1}+\bar U_{1s}}\left((W_{2}^{\sharp,1}+\bar U_{2s})\W_1
+\p_{x_3}Y_5
-\frac{H_{\sharp1}^{\gamma-1}}{\gamma-1}\p_{x_3}Y_4\right)\\
&\quad+(R_2^{\sharp, 1}-R_2^{\sharp, 2})\omega_{1}^2
+(R_1^{\sharp, 1}-R_1^{\sharp, 2})\p_{x_3}W_{5}^2
-\frac{1}{\gamma-1}(R_1^{\sharp, 1}H_{\sharp 1}-R_1^{\sharp, 2}H_{\sharp2})\p_{x_3}W_{4}^2,\\
\W_3&=\frac{1}{W_{1,1}^\sharp+\bar U_1}\left(W_{3}^{\sharp,1}\W_1-\frac{1}{r}\p_{\theta}Y_{5}^2
+\frac{H_{\sharp 1}^{\gamma-1}}{\gamma-1}\frac{1}{r}\p_{\theta}W_{4}^2\right)\\
&\quad+(R_3^{\sharp, 1}-R_3^{\sharp, 2})\omega_{1}^2
-(R_1^{\sharp, 1}-R_1^{\sharp, 2})\frac{1}{r}\p_{\theta}W_{5}^2
+\frac{1}{\gamma-1}(R_1^{\sharp, 1}H_{\sharp1}-R_1^{\sharp, 2}H_{\sharp2})\frac{1}{r}\p_{\theta}W_{4}^2,
\end{split}	
\end{eqnarray}
here $\omega_{j}^k$ is the $j$-th component of the curl of the vector field ${\bf V}^{k}$.
Then the simple calculus attains that
\begin{equation}\label{eq_diff_omg1}
	\begin{cases}
				\left(
		\p_r
		+\frac{W_2^{\sharp,1}+\bar U_{2s}}{W_1^{\sharp, 1}+\bar U_{1s}}\frac{1}{r}\p_{\theta}
		+\frac{W_3^{\sharp,1}}{W_1^{\sharp,1}+\bar U_{1s}}\p_{x_3}
		\right)\W_1+
		a_0^\sharp(r,\theta,x_3;{\bf V}^{\sharp,1})\W_1\\
		=
		-R_2({\bf Y}^\sharp)\frac{1}{r}\p_{\theta}\omega_{1}^2
		-R_3({\bf Y}^\sharp)\p_{x_3}\omega_{1}^2
		-(a_{0}^{\sharp1}-a_{0}^{\sharp2})\omega_{1}^2
		+F_{0}^{\sharp1}-F_{0}^{\sharp2},\\
		\W_1(r_0,\theta,x_3)=0,
	\end{cases}
\end{equation}
where
\begin{eqnarray*}
H_{\sharp k}=
H(r,\theta,x_3;W_1^{\sharp, k},
W_2^{\sharp,k},W_3^{\sharp,k},W_{4}^k,W_{5}^k,W_6^{\sharp,k}),\quad k=1,2.
\end{eqnarray*}
Utilizing the characteristic method again gives that
\begin{eqnarray}
	\begin{split}
		\sum_{j=1}^{3}\|\W_j\|_{C^{\al}(\bar\Omega)}&\leq
		C
		\bigg(
		\|{\bf Y}^\sharp\|_{C^{1,\al}(\bar\Omega)}
		\|(W_{4}^2,W_{5}^2,{\bm\omega}^2)\|_{C^{1,\al}(\bar\Omega)}\\
		&\quad+\|(Y_4,Y_5)\|_{C^{1,\al}(\bar\Omega)}
		\|{\bf W}^\sharp\|_{C^{1,\al}(\bar\Omega)}
		\bigg)\\
		&\leq C\delta \|{\bf Y}^\sharp\|_{C^{1,\al}(\bar\Omega)}.
	\end{split}
\end{eqnarray}

The final task is to estimate $Y_j,j=1,2,3$ and $Y_6$. By virtue of \eqref{def_curl}-\eqref{bc_def_curl}, we obtain
\begin{eqnarray}\label{diff_def_curl}
	\begin{cases}
		\p_{r}(r\bar\rho_s(1-\bar M_{1s}^2)Y_1)
		-\p_{r}\left(r\bar\rho_s\bar M_{1s}\bar M_{2s} Y_2\right)
		+\p_{\theta}(\bar\rho_s(1-\bar M_{2s}^2)Y_2)\\
		\quad-\p_{\theta}\left(\bar\rho_s\bar M_{1s}\bar M_{2s}Y_1\right)
		+\p_{x_3}(r\bar\rho_s Y_3)
		+\p_{r}\left(\frac{r\bar\rho_s \bar U_{1s}}{\bar c_s^2}Y_6\right)
		+\p_{\theta}\left(\frac{\bar\rho_s \bar U_{2s}}{\bar c_s^2}Y_6\right)\\
		=
		\p_{r}(r\bar U_{1s} Q(Y_4,Y_5))			
		+\p_{\theta}(\bar U_{2s} Q(Y_4,Y_5))\\
		\quad+
		\p_{r}(r(F_1^{\sharp1}-F_1^{\sharp2}))
		+\p_{\theta}(F_2^{\sharp1}-F_2^{\sharp2})
		+\p_{x_3}(r(F_3^{\sharp1}-F_3^{\sharp2})),\\
		\frac{1}{r}\p_{\theta} Y_3-\p_{x_3} Y_2=\W_1,\\
		\p_{x_3} Y_1-\p_{r} Y_3=\W_2,\\
		\frac{1}{r}\p_{r}(rY_2)-\frac{1}{r}\p_{\theta} Y_1=\W_3,\\
		\Delta Y_6+\frac{\bar\rho_s \bar U_{1s}}{\bar c_s^2} Y_1+ \frac{\bar\rho_s \bar U_{2s}}{\bar c_s^2} Y_2-\frac{\bar\rho_s}{\bar c_s^2}Y_6=-Q(Y_4,Y_5)+G^{\sharp1}-G^{\sharp2},
	\end{cases}
\end{eqnarray}
supplemented with the following boundary conditions
\begin{eqnarray}\label{bc_diff_def_curl}
	\begin{cases}
		\left(\bar\rho_s(1-\bar M_{1s}^2) Y_1-\bar\rho_s \bar M_{1s}\bar M_{2s}Y_2\right)(r_0, \th, x_3)=(g_0^{\sharp1}-g_0^{\sharp2})(\theta,x_3),\\
		\left(\bar\rho_s(1-\bar M_{1s}^2) Y_1-\bar\rho_s \bar M_{1s}\bar M_{2s}Y_2\right)(r_1, \th, x_3)=(g_1^{\sharp1}-g_1^{\sharp2})(\theta,x_3)+\ka^1-\ka^2,\\
		Y_3(r,\theta,\pm 1)=0,\\
		Y_6(r_0,\theta,x_3)=
		Y_6(r_1,\theta,x_3)=0,\\
		\p_{x_3}Y_6(r,\theta,\pm 1)=0,
	\end{cases}
\end{eqnarray}
where
\begin{eqnarray*}
	\begin{split}
&F_j^{\sharp k}=F_j(r,\theta,x_3;{\bf V}^{\sharp, k},W_6^{\sharp, k}),\quad k=1,2,\,j=1,2,3,\\
&G^{\sharp k}=G(r,\theta,x_3;{\bf V}^{\sharp, k},W_6^{\sharp, k}),\quad\,
k=1,2,\\
&g_{j}^{\sharp k}=g_{j}^\sharp(\theta,x_3;{\bf V}^{\sharp, k}),\quad\quad\quad\,\,\quad\,\,\, k=1,2, j=0,1.
	\end{split}
\end{eqnarray*}
Consequently, we have the following estimates
\begin{equation}
\begin{split}
\sum_{j=1}^{3}&\|Y_j\|_{C^{1,\al}(\bar \Omega)}
+\|Y_6\|_{C^{1,\al}(\bar\Omega)}\\
&\leq C\bigg(
\sum_{j=1}^{3}\|F_j^{\sharp1}-F_j^{\sharp2}\|_{C^{1,\al}(\bar \Omega)}
+\|(Y_4,Y_5)\|_{C^{1,\al}(\bar \Omega)}
+\sum_{j=1}^{3}\|\W_j\|_{C^{\al}(\bar \Omega)}\\
&\quad\quad+\|G^{\sharp1}-G^{\sharp2}\|_{C^{1,\al}(\bar \Omega)}
+\sum_{j=0}^{1}\|g_j^{\sharp1}-g_j^{\sharp2}\|_{C^{1,\al}(\bar \Omega)}
+\|\ka^1-\ka^2\|_{C^{1,\al}(\bar \Omega)}
		\bigg)\\
		&\leq C\delta\|{\bf Y}^\sharp\|_{C^{1,\al}(\bar \Omega)}.
\end{split}
\end{equation}
Collecting all those estimates together, we achieve
\begin{equation}
	\|{\bf Y}\|_{C^{1,\al}(\bar \Omega)}
	\leq C_{**} \delta
	\|{\bf Y}^\sharp\|_{C^{1,\al}(\bar \Omega)}.
\end{equation}
The contraction of the mapping $\mathcal{T}$ with regard to $C^{1, \alpha}$ norm follows for any $\epsilon\leq\epsilon_0$ through setting $C_{**}\delta=3C_*C_{**}\epsilon\leq\frac2 3$ and $\epsilon_0=\min\{\epsilon_*, \frac{1}{3C_*C_{**}}\}$, which further illustrates that there exists a unique fixed point ${\bf W}\in\mathcal{S}_{\delta}$ of $\mathcal{T}$. Therefore, the velocity field and the density can be recovered by
$${\bf U}=(W_1+\bar U_{1s},W_2+\bar U_{2s},W_3),$$
and
$$\rho=H(W_4+A_0, W_5+K_0, W_6+\bar\Phi_s, |\U|^2).$$
Moreover, the equations for \eqref{def_curl}-\eqref{bc_def_curl} are changed back to
\begin{eqnarray*}
	\begin{cases}
\frac{1}{r}\p_r(r\rho U_1)+\frac{1}{r}\p_{\th}(\rho U_2)+\p_{x_3}(\rho U_3)=0,\\
		\frac{1}{r}\p_{\theta} U_3-\p_{x_3} U_2=\omega_1,\\
		\p_{x_3} U_1-\p_{r} U_3=\omega_2,\\
		\frac{1}{r}\p_{r}(rU_2)-\frac{1}{r}\p_{\theta} U_1=\omega_3,\\
		\Delta \Phi=\rho-b,
	\end{cases}
\end{eqnarray*}
and
\begin{eqnarray*}
	\begin{cases}
		(\rho U_1)(r_0,\theta,x_3)=(\bar\rho_s\bar U_{1s})(r_0)+\epsilon m_{en}(\theta,x_3),\\
		(\rho U_1)(r_1,\theta,x_3)=(\bar\rho_s\bar U_{1s})(r_1)+\epsilon m_{en}(\theta,x_3)+\ka,\\
		U_3(r,\th,\pm 1)=0,\\
	\Phi(r_0,\th,x_3)=\epsilon\Phi_{en}(\th,x_3),\\
	\Phi(r_1,\th,x_3)=\epsilon\Phi_{ex}(\th,x_3),\\
	\p_{x_3}\Phi(r,\th,\pm 1)=0.\\
	\end{cases}
\end{eqnarray*}
The compatibility condition \eqref{cp3} suggests that $\ka=0$. Hence, the proof is finished.

\begin{remark}
The proof of Theorem \ref{theoremm} is similar to the proof of Theorem \ref{theorem}. It is worth mentioning that the Bernoulli's function, the entropy and the vorticity field can be resolved as above, while the handle of the velocity and the electrostatic potential are different from the Theorem \ref{theorem}. And there is no need to introduce a constant $\ka$ by noting that there are no compatibility conditions. One may refer to section 3 of \cite{Weng4} to acquire more details.
\end{remark}



\section{The structural stability of smooth transonic flows}

This section devotes to investigate the structural stability of smooth transonic flows with nonzero vorticity under suitable axi-symmetric perturbations.

Define
\be\label{quyu}
\mathbb{D}:=(r_0, r_1)\times(-1, 1),
\ee
and the boundary $\p\mathbb{D}$, which is denoted by $\Sigma^0, \Sigma^1, \Sigma^w$, becomes
\begin{align*}
&\Sigma^0=\{(r, x_3):r=r_0, -1<x_3<1\}, \\
&\Sigma^1=\{(r, x_3):r=r_1, -1<x_3<1\}, \\
&\Sigma^w=\{(r, x_3):r_0<r<r_1, x_3=\pm 1\}.
\end{align*}
We target on looking for axi-symmetric transonic flows in the concentric cylinder $\mathbb{D}$ satisfying suitable boundary conditions on the inner and outer cylinder respectively.

In this case, we assume that the velocity and the density are of the form
\be\label{115}
{\bf u}(x)=U_1(r, x_3){\bf e}_r+U_2(r, x_3){\bf e}_{\th}+U_3(r, x_3){\bf e}_3, \quad \rho(x)=\rho(r, x_3), \quad A(x)=A(r, x_3),
\ee
then the system \eqref{ProblemImm} is reduced to
\be\label{116}
\begin{cases}
\p_r(r\rho U_1)+\p_{x_3}(r\rho U_3)=0,\\
\rho(U_1\p_r+U_3\p_{x_3})U_1+\p_rP-\frac{\rho U_2^2}{r}=\rho\cdot\p_r\Phi,\\
\rho(U_1\p_r+U_3\p_{x_3})U_2+\frac{\rho U_1U_2}{r}=0,\\
\rho(U_1\p_r+U_3\p_{x_3})U_3+\p_{x_3}P=\rho\cdot\p_{x_3}\Phi,\\
\rho(U_1\p_r+U_3\p_{x_3})A=0,\\
(\p_r^2+\frac1 r\p_r+\p_{x_3}^2)\Phi=\rho-b.
\end{cases}
\ee

We prescribe the following boundary conditions on the inner cylinder:
\be\label{117}
\begin{cases}
U_2(r_0, x_3)=\bar U_{2t}(r_0)+\eps \tilde U_{2, en}(x_3),\\
U_3(r_0, x_3)=\eps \tilde U_{3, en}(x_3),\\
A(r_0, x_3)=\bar A_{t}+\eps\tilde{A}_{en}(x_3), \quad K(r_0, x_3)=\bar K_{t}+\eps\tilde{K}_{en}(x_3),\\
\Phi(r_0, x_3)=\bar\Phi_{t}(r_0)+\eps\tilde\Phi_{en}(x_3),
\end{cases}
\ee
with $\tilde U_{2, en}(x_3), \tilde U_{3, en}(x_3), \tilde A_{en}(x_3), \tilde K_{en}(x_3), \tilde\Phi_{en}(x_3)\in C^{2, \al}(\mathbb{D})$ and $\eps$ small enough. And the boundary conditions posed on the outer cylinder are
\be\label{118}
\begin{cases}
U_1(r_1, x_3)=\bar U_{1t}(r_1)+\eps \tilde U_{1, ex}(x_3),\\
\Phi(r_1, x_3)=\bar\Phi_{t}(r_1)+\eps\tilde\Phi_{ex}(x_3),
\end{cases}
\ee
with $\tilde U_{1, ex}(x_3), \tilde\Phi_{ex}(x_3)\in C^{2, \al}(\mathbb{D})$.
Also, the compatibility conditions are prescribed as
\be\label{cp111}
\begin{cases}
\tilde U_{3, en}(\pm 1)=\tilde U_{3, en}''(\pm1)=0,\\
\tilde U_{1, ex}'(\pm1)=\tilde U_{2, en}'(\pm 1)=\tilde K_{en}'(\pm 1)=\tilde A_{en}'(\pm 1)=0.
\end{cases}
\ee
Moreover, the slip boundary conditions on $x_3=\pm 1$ are given as:
\be\label{119}
U_3(r, \pm 1)=0, \,\, \p_{x_3}\Phi(r, \pm 1)=0.
\ee

The perturbation of the ion background density is also imposed as
\be\label{b}
b(r, x_3)=b_0+\eps\tilde b(r, x_3),
\ee
where $\tilde{b}\in C^{\al}(\bar{\mathbb{D}})$.

The following theorem on the existence and uniqueness of smooth axi-symmetric transonic spiral flows with nonzero vorticity holds.
\bt\label{th3}
Given any background flow with nonzero radial velocity $\bar U_{1}\neq0$, for any smooth $C^{2, \alpha}(\mathbb{D})$ functions $\tilde A_{en}, \tilde K_{en}, \tilde\Phi_{en}, \tilde\Phi_{ex}$ and $\tilde U_{1, ex}, \tilde U_{2, en}, \tilde U_{3, en}$, there exists a small constant $\eps_0$ depending only upon the background flow and the boundary datum, such that if $0<\eps\leq\eps_0$, there exists a unique smooth transonic flow with nonzero vorticity
$$
{\bf u}=U_1(r, x_3){\bf e}_r+U_2(r, x_3){\bf e}_{\th}+U_3(r, x_3){\bf e}_3, \, A(x)=A(r, x_3), \, K(x)=K(r, x_3), \,\Phi(x)=\Phi(r, x_3)
$$
to \eqref{116} and \eqref{117} and \eqref{118}, with the following estimate holds
\be\label{120}
\sum\limits_{j=1}^{2}||U_j-\bar U_{jt}||_{C^{2, \al}(\bar{\mathbb{D}})}+||U_3||_{C^{2, \al}(\bar{\mathbb{D}})}+||K-\bar K_t||_{C^{2, \al}(\bar{\mathbb{D}})}+||A-\bar A_t||_{C^{2, \al}(\bar{\mathbb{D}})}+||\Phi-\bar\Phi_t||_{C^{2, \al}(\bar{\mathbb{D}})}\leq C_0\eps,
\ee
for some constant $C_0$ depending only on the background solution and the boundary datum.
\et

\begin{remark}
The structural stability of the transonic solution under cylindrical perturbations as was done in \cite{Weng2} is much more complicated and will be investigated in the near future.
\end{remark}

By employing again the deformation-curl-Poisson decomposition obtained in Section \ref{sec1}, the system \eqref{116} is equivalent to
\be\label{41}
\begin{cases}
\p_r\left(rH(A, K, \Phi, |\U|^2)U_1\right)+\p_{x_3}\left(rH(A, K, \Phi, |\U|^2)U_3\right)=0,\\
(U_1\p_r+U_3\p_{x_3})(r U_2)=0,\\
(U_1\p_r+U_3\p_{x_3})A=0,\\
(U_1\p_r+U_3\p_{x_3})K=0,\\
U_1(\p_r U_3-\p_{x_3} U_1)=U_2\p_{x_3}U_2+\frac{1}{\ga-1}\rho^{\ga-1}\p_{x_3}A-\p_3K,\\
(\p_r^2+\frac1 r\p_r+\p_{x_3}^2)\Phi=\rho-b.
\end{cases}
\ee

We define the deviations between the flow and the background flow, which are still denoted same as before
\begin{align*}
&W_1=U_1-\bar U_{1t}, \quad W_2=U_2-\bar U_{2t}, \quad W_3=U_3, \\
&W_4=A-\bar A_t, \quad W_5=K-\bar K_t,\quad W_6=\Phi-\bar\Phi_t.
\end{align*}
The solution space $\mathcal{S}_{\delta_0}$ consists of the vector functions ${\bf W}=(W_1,\ldots, W_6)\in C^{2, \al}(\bar{\mathbb{D}})$ satisfying the estimate
$$
||{\bf W}||_{\mathcal{S}_{\delta_0}}:=\sum\limits_{j=1}^6||W_j||_{C^{2, \al}(\bar{\mathbb{D}})}\leq\delta_0,
$$
and the following compatibility conditions:
\be\label{compati}
\begin{cases}
(W_3, \p_{x_3}^2W_3)(r, \pm 1)=0, \quad\quad\quad\quad\quad\quad \mbox{on} \,\, r\in[r_0, r_1],\\
(\p_{x_3}W_2, \p_{x_3}W_4, \p_{x_3}W_5)(r, \pm1)=0, \quad\,\, \mbox{on} \,\, r\in[r_0, r_1],\\
(\p_{x_3}W_1, \p_{x_3}W_6)(r, \pm 1)=0, \quad\quad\quad\quad\,\,\, \mbox{on}\,\, r\in[r_0, r_1].
\end{cases}
\ee

For any ${\bf W}^\sharp\in\mathcal{S}_{\delta_0}$, we construct an operator $\mathcal{P}: {\bf W}^\sharp\in\mathcal{S}_{\delta_0}\mapsto{\bf W}\in\mathcal{S}_{\delta_0}$. The procedure of solving ${\bf W}$ will be divided into the following steps.

We first solve the following hyperbolic problems to obtain $(W_2, W_4, W_5)$:
\be\label{46}
\begin{cases}
((\bar U_{1t}+W_1^\sharp)\p_r+W_3^\sharp\p_{x_3})(rW_2, W_4, W_5)=0,\\
(W_2, W_4, W_5)(r_0, x_3)=(\eps \tilde U_{2, en}(x_3), \eps\tilde{A}_{en}(x_3), \eps\tilde{K}_{en}(x_3)).
\end{cases}
\ee
Then the characteristics method gives
\be\label{47}
||(W_2, W_4, W_5)||_{C^{2, \al}(\bar{\mathbb{D}})}\leq C\eps,
\ee
where $C$ depends only upon the background solution and the boundary data. One can further verify the following compatibility conditions:
\be\label{245compat}
(\p_{x_3}W_2, \p_{x_3}W_4, \p_{x_3}W_5)(r, \pm 1)=0, \quad \mbox{on}\,\, r\in[r_0, r_1].
\ee

Next, with the aid of \eqref{midu}, one can replace $(\bar\rho_s, \bar U_{1s}, \bar U_{2s}, \bar\Phi_s)$ in \eqref{mimi} and \eqref{shengsu} by $(\bar\rho_t, \bar U_{1t}, \bar U_{2t}, \bar\Phi_t)$, to get
 \begin{equation}\label{mimiii} \begin{cases}
\rho=\bar\rho_t
-\frac{\bar\rho_t \bar U_{1t}}{\bar c_t^2} W_1-\frac{\bar\rho_t \bar U_{2t}}{\bar c_t^2} W_2-\frac{\bar\rho_t}{(\gamma-1)A_0}W_4+\frac{\bar\rho_t}{\bar c_t^2}W_5+\frac{\bar\rho_t}{\bar c_t^2}W_6+O(|{\bf W}|^2),\\
c^2=\bar c_t^2+(\gamma-1)\left(W_5+W_6-\bar U_{1t} W_1-\bar U_{2t} W_2\right)-\frac{1}{2}(\gamma-1)(W_1^2+W_2^2+W_3^2),
\end{cases}  \end{equation}
here
\begin{eqnarray*}\no
\bar\rho_t=H(A_0,K_0,\bar\Phi_t,|\bar \U_t|^2), \quad \bar c_t=c(\bar\rho_t, K_0, \bar\Phi_t).
\end{eqnarray*}
Plugging \eqref{mimiii} into \eqref{41}, then $(W_1, W_3, W_6)$ will be obtained by solving the following linear system:
\be\label{49}
\begin{cases}
\p_r\left(r\bar\rho_t(1-\bar M_{1t}^2)W_1\right)+\p_{x_3}(r\bar\rho_t W_3)+\p_r\left(\frac{r\bar\rho_t \bar U_{1t}}{\bar c_t^2}W_6\right)\\
\quad\quad\quad\quad=\p_r\left(r\bar\rho_t\bar M_{1t}\bar M_{2t}W_2\right)+\p_r\left(\frac{r\bar\rho_t\bar U_{1t}}{(\ga-1)A_0}W_4-\frac{r\bar\rho_t\bar U_{1t}}{\bar c_t^2}W_5\right)+\p_r(r G_1)+\p_{x_3}(r G_3),\\
\p_r W_3-\p_{x_3} W_1=G_2,\\
(\p_r^2+\frac1 r\p_r+\p_{x_3}^2)W_6+\frac{\bar\rho_t\bar U_{1t}}{\bar c_t^2}W_1-\frac{\bar\rho_t}{\bar c_t^2}W_6=-\frac{\bar\rho_t\bar U_{2t}}{\bar c_t^2}W_2-\frac{\bar\rho_t}{(\ga-1)A_0}W_4+\frac{\bar\rho_t}{\bar c_t^2}W_5+G_4,\\
W_1(r_1, x_3)=\eps \tilde U_{1, ex}(x_3),\\
W_3(r_0, x_3)=\eps\tilde U_{3, en}(x_3),\\
W_3(r, \pm 1)=0, \quad \forall r\in[r_0, r_1]\\
W_6(r_0, x_3)=\eps\tilde\Phi_{en}(x_3), \quad W_6(r_1, x_3)=\eps\tilde\Phi_{ex}(x_3), \quad \p_{x_3}W_6(r, \pm 1)=0, \,\,\forall r\in[r_0, r_1],
\end{cases}
\ee
where
\be\label{45}  \begin{split}
&G_1(W_1^\sharp, W_2, W_3^\sharp, W_4, W_5, W_6^\sharp)\\
&\quad=-\left(H-\bar H_t\right)W_1^\sharp-\Bigg(H-\bar H_t+\frac{\bar\rho_t}{(\ga-1)A_0}W_4-\frac{\bar\rho_t}{\bar c_t^2}W_5-\frac{\bar\rho_t}{\bar c_t^2}W_6^{\sharp}+\frac{\bar\rho_t\bar U_{1t}}{\bar c_t^2}W_1^\sharp+\frac{\bar\rho_t\bar U_{2t}}{\bar c_t^2}W_2\Bigg)\bar U_{1t},\\
&G_2(W_1^\sharp, W_2, W_3^\sharp, W_4, W_5, W_6^\sharp)=\frac{1}{\bar U_{1t}+W_1^\sharp}\left((\bar U_{2t}+W_2)\p_{x_3}W_2+\frac{H^{\ga-1}({\bf W}^\sharp)}{\ga-1}\p_{x_3}W_4-\p_{x_3}W_5\right),\\
&G_3(W_1^\sharp, W_2, W_3^\sharp, W_4, W_5, W_6^\sharp)=-(H-\bar H_t)W_3^\sharp,\\
&G_4(W_1^\sharp, W_2, W_3^\sharp, W_4, W_5, W_6^\sharp)\\
&\quad=H-\bar H_t+\frac{\bar\rho_t}{(\ga-1)A_0}W_4-\frac{\bar\rho_t}{\bar c_t^2}W_5-\frac{\bar\rho_t}{\bar c_t^2}W_6^{\sharp}+\frac{\bar\rho_t\bar U_{1t}}{\bar c_t^2}W_1^\sharp+\frac{\bar\rho_t\bar U_{2t}}{\bar c^2}W_2-\eps \tilde b,
\end{split} \ee
with
\begin{align*}
&\rho=H(W_4+A_0, W_5+K_0, W_6^\sharp+\Phi_0, |W_1^\sharp+\bar U_{1t}|^2,|W_2+\bar U_{2t}|^2, |W_3^{\sharp}|^2)=:H,\\
&\bar\rho_t=H(A_0, K_0, \bar\Phi_t, |\bar\U_t|^2)=:\bar H_t, \quad \bar c_t=c(\bar\rho_t, K_0, \bar\Phi_t).
\end{align*}

By virtue of \eqref{compati}, one can verify that $G_1, G_2, G_3, G_4$ satisfy the following compatibility conditions:
\be\label{Gcompat}  \begin{cases}
\p_{x_3}G_1(W_1^\sharp, W_2, W_3^\sharp, W_4, W_5, W_6^\sharp)(r, \pm 1)=0,\\
G_2(W_1^\sharp, W_2, W_3^\sharp, W_4, W_5, W_6^\sharp)(r, \pm 1)=0,\\
G_3(W_1^\sharp, W_2, W_3^\sharp, W_4, W_5, W_6^\sharp)(r, \pm 1)=0,\\
\p_{x_3}G_4(W_1^\sharp, W_2, W_3^\sharp, W_4, W_5, W_6^\sharp)(r, \pm 1)=0.
\end{cases} \ee

Thanks to \eqref{Gcompat} and the estimate \eqref{47}, one has
\be\label{GGG}
\begin{cases}
||G_1(W_1^\sharp, W_2, W_3^\sharp, W_4, W_5, W_6^\sharp)||_{C^{1, \al}(\bar{\mathbb{D}})}\leq C(\eps+\delta_0^2),\\
||G_2(W_1^\sharp, W_2, W_3^\sharp, W_4, W_5, W_6^\sharp)||_{C^{1, \al}(\bar{\mathbb{D}})}\leq C\eps,\\
||G_3(W_1^\sharp, W_2, W_3^\sharp, W_4, W_5, W_6^\sharp)||_{C^{1, \al}(\bar{\mathbb{D}})}\leq C(\delta_0+\eps)\delta_0,\\
||G_4(W_1^\sharp, W_2, W_3^\sharp, W_4, W_5, W_6^\sharp)||_{C^{1, \al}(\bar{\mathbb{D}})}\leq C(\eps+\delta_0^2).
\end{cases}
\ee
It is easy to see that there exists a unique solution $\psi_1(r, x_3)$ to the following problem
\be\label{411}
\begin{cases}
(\p_r^2+\p_{x_3}^2)\psi_1=G_2(W_1^\sharp, W_2, W_3^\sharp, W_4, W_5, W_6^\sharp),\\
\psi_1(r_1, x_3)=\p_r\psi_1(r_0, x_3)=0,\\
\psi_1(r, \pm 1)=0.
\end{cases}
\ee

To deal with the singularity near the corner, we employ the standard symmetric extension technique again to extend $\mathbb{D}$ along the $x_3$-direction, the extended region is denoted by
\be
\mathbb{D}_e=\{(r, x_3):r_0<r<r_1, -3<x_3<3\}.
\ee
We extend $\psi_1, G_2$ as
\be\label{psicompat}
(\psi_1^e, G_2^e)(r, x_3)=\begin{cases}
(\psi_1, G_2)(r, x_3), \quad\quad\quad\,\,\quad (r, x_3)\in\bar{\mathbb{D}},\\
-(\psi_1, G_2)(r, -2-x_3), \quad (r, x_3)\in[r_0, r_1]\times[-3, -1],\\
-(\psi_1, G_2)(r, 2-x_3), \quad\,\,\,\,  (r, x_3)\in[r_0, r_1]\times[1, 3].
\end{cases}
\ee
Thus $\psi_1^e$ satisfies
\be\label{4111}
\begin{cases}
(\p_r^2+\p_{x_3}^2)\psi_1^e=G_2^e(W_1^\sharp, W_2, W_3^\sharp, W_4, W_5, W_6^\sharp)\in C^{1, \alpha}(\mathbb{D}^e),\\
\psi_1^e(r_1, x_3)=\p_r\psi_1^e(r_0, x_3)=0,\\
\psi_1^e(r, \pm 1)=0.
\end{cases}
\ee
Therefore, $\psi_1(r, x_3)\in C^{3, \al}(\bar{\mathbb{D}})$ and satisfies
\be\label{412}
\begin{cases}
||\psi_1||_{C^{3, \al}(\bar{\mathbb{D}})}\leq C||G_2(W_1^\sharp, W_2, W_3^\sharp, W_4, W_5, W_6^\sharp)||_{C^{1, \al}(\bar{\mathbb{D}})}\leq C\epsilon,\\
\p_r\psi_1(r, \pm 1)=0, \quad \p_{x_3}^2\psi_1(r, \pm1)=0, \quad \p_{x_3}^2\p_r\psi_1(r, \pm 1)=0,
\end{cases}
\ee
owing to the compatibility condition of $G_2$ in \eqref{Gcompat}.

Define $\hat W_1=W_1+\p_{x_3}\psi_1$ and $\hat W_3=W_3-\p_r\psi_1$, we have
\be\label{414}
\begin{cases}
\p_r\left(r\bar\rho_t(1-\bar M_{1t}^2)\hat W_1\right)+\p_{x_3}(r\bar\rho_t \hat W_3)+\p_r\left(\frac{r\bar\rho_t \bar U_{1t}}{\bar c_t^2}W_6\right)=\p_r \tilde G_1+\p_{x_3}\tilde G_3,\\
\p_r\hat W_3-\p_3\hat W_1=0,\\
r(\p_r^2+\frac1 r\p_r+\p_{x_3}^2)W_6+\frac{r\bar\rho_t\bar U_{1t}}{\bar c_t^2}\hat W_1-\frac{r\bar\rho_t}{\bar c_t^2}W_6=\tilde G_4,\\
\hat W_1(r_1, x_3)=\epsilon \tilde U_{1, ex}(x_3),\\
\hat W_3(r_0, x_3)=\epsilon \tilde U_{3, en}(x_3),\\
W_6(r_0, x_3)=\epsilon\tilde\Phi_{en}(x_3),\quad W_6(r_1, x_3)=\epsilon\tilde\Phi_{ex}(x_3), \quad \p_{x_3}W_6(r, \pm 1)=0,
\end{cases}
\ee
where
\begin{align*}
&\tilde{G}_1(W_1^\sharp, W_2, W_3^\sharp, W_4, W_5, W_6^\sharp)=r\bigg(G_1+\bar\rho_t(1-\bar M_{1t}^2)\p_{x_3}\psi_1\\
&\quad\quad\quad\quad\quad\quad\quad\quad\quad\quad\quad\quad\quad\quad+\bar\rho_t\bar M_{1t}\bar M_{2t}W_2+\frac{\bar\rho_t\bar U_{1t}}{(\ga-1)A_0}W_4-\frac{\bar\rho_t\bar U_{1t}}{\bar c_t^2}W_5\bigg),\\
&\tilde{G}_3(W_1^\sharp, W_2, W_3^\sharp, W_4, W_5, W_6^\sharp)=r\left(G_3-\bar\rho_t\p_{r}\psi_1\right),\\
&\tilde{G}_4(W_1^\sharp, W_2, W_3^\sharp, W_4, W_5, W_6^\sharp)=r\left(G_4+\frac{\bar\rho_t\bar U_{1t}}{\bar c_t^2}\p_{x_3}\psi_1-\frac{\bar\rho_t\bar U_{2t}}{\bar c_t^2}W_2-\frac{\bar\rho_t}{(\ga-1)A_0}W_4+\frac{\bar\rho_t}{\bar c_t^2}W_5\right).
\end{align*}

By the compatibility conditions \eqref{compati} and \eqref{412}, one has the compatibility conditions for $\hat{W}_1, \hat{W}_3, \tilde G_1, \tilde G_3$:
\be\label{GGcompat}
\begin{cases}
\p_{x_3}\hat W_1(r, \pm 1)=0,\quad \hat W_3(r, \pm 1)=\p_{x_3}^2\hat W_3(r, \pm 1)=0,\\
\p_{x_3}\tilde G_1(r, \pm 1)=0,\quad
\tilde G_3(r, \pm 1)=0.
\end{cases}
\ee

It follows from the second equation of \eqref{414} that there exists a unique potential function $\psi(r, x_3)$ such that $\hat W_1=\p_r\psi,  \hat W_3=\p_{x_3}\psi$, and $\psi, W_6$ should satisfy the following elliptic equations:
\begin{eqnarray}\label{415}
	\begin{cases}
	\p_{r}\left(r\bar\rho_t(1-\bar M_{1t}^2)\p_{r}\psi\right)+\p_{x_3}(r\bar\rho_t\p_{x_3}\psi)
+\p_{r}\left(\frac{r\bar\rho_t\bar U_{1t}}{\bar c_t^2}W_6\right)=\p_r\tilde G_1+\p_{x_3}\tilde G_3,\\
		r\left(\p_r^2+\frac1 r\p_r+\p_{x_3}^2\right)W_6+\frac{r\bar\rho_t\bar U_{1t}}{\bar c_t^2} \p_{r}\psi-\frac{r\bar\rho_t}{\bar c_t^2}W_6=\tilde G_4,
	\end{cases}
\end{eqnarray}
with the corresponding boundary conditions
\be\label{4155}
\begin{cases}
\p_{x_3}\psi(r_0, x_3)=\epsilon \tilde U_{3, en}(x_3),\\
\p_r\psi(r_1, x_3)=\epsilon \tilde U_{1, ex}(x_3),\\
\p_r\psi(r, \pm 1)=0, \quad \psi(r_0, 0)=0,\\
\p_{x_3}\psi(r, \pm 1)=0,\\
W_6(r_0, x_3)=\epsilon\tilde\Phi_{en}(x_3),\quad W_6(r_1, x_3)=\epsilon\tilde\Phi_{ex}(x_3),\\
\p_{x_3}W_6(r, \pm1)=0.
\end{cases}
\ee

Thanks to the condition on the background solution, one has $1-\bar M_{1t}^2(r)>0$ for any $r\in(r_0, r_1)$. Let
\be
W_6=\phi+\phi_1,
\ee
where
\begin{eqnarray}
	\phi_1(r, x_3)=\epsilon\left(
	\frac{r_1-r}{r_1-r_0}\tilde\Phi_{en}(x_3)
	+\frac{r-r_0}{r_1-r_0}\tilde\Phi_{ex}(x_3)
	\right).
\end{eqnarray}
Then the system \eqref{415}-\eqref{4155} is transformed into
\begin{eqnarray}\label{416}
	\begin{cases}
	\p_{r}\left(r\bar\rho_t(1-\bar M_{1t}^2)\p_{r}\psi\right)+\p_{x_3}(r\bar\rho_t\p_{x_3}\psi)
+\p_{r}\left(\frac{r\bar\rho_t\bar U_{1t}}{\bar c_t^2}\phi\right)=\p_r\tilde{\tilde{G}}_1+\p_{x_3}\tilde{\tilde{G}}_3,\\
		r\left(\p_r^2+\frac1 r\p_r+\p_{x_3}^2\right)\phi+\frac{r\bar\rho_t\bar U_{1t}}{\bar c_t^2} \p_{r}\psi-\frac{r\bar\rho_t}{\bar c_t^2}\phi=\tilde{\tilde G}_4,\\
\p_r\psi(r_1, x_3)=\epsilon \tilde U_{1, ex}(x_3),\\
\p_{x_3}\psi(r_0, x_3)=\epsilon\tilde U_{3, en}(x_3),\\
\p_{x_3}\psi(r, \pm 1)=0,\\
\phi(r_0, x_3)=\phi(r_1, x_3)=0,\\
\p_{x_3}\phi(r, \pm 1)=0,
	\end{cases}
\end{eqnarray}
where
\begin{align*}
&\tilde{\tilde{G}}_1(W_1^\sharp, W_2, W_3^\sharp, W_4, W_5, W_6^\sharp)=\tilde G_1-\frac{r\bar\rho_t\bar U_{1t}}{\bar c_t^2}\phi_1,\\
&\tilde{\tilde{G}}_3(W_1^\sharp, W_2, W_3^\sharp, W_4, W_5, W_6^\sharp)=\tilde G_3,\\
&\tilde{\tilde{G}}_4(W_1^\sharp, W_2, W_3^\sharp, W_4, W_5, W_6^\sharp)=\tilde G_4+\frac{r\bar\rho_t}{\bar c_t^2}\phi_1-r(\p_r^2+\frac1 r\p_r+\p_{x_3}^2)\phi_1.
\end{align*}
Introduce the function space $\mathscr{H}$ in $\bar{\mathbb{D}}$ by
\begin{equation*}
	\mathscr{H}=\{(\eta,\xi)\in H^1(\bar{\mathbb{D}})\times H^1(\bar{\mathbb{D}}):\int_{-1}^1\int_{r_0}^{r_1}\eta rdrdx_3=0,\quad\xi(r_0,x_3)=\xi(r_1, x_3)=0\}.
\end{equation*}
For $\psi, \phi\in\mathscr{H}$, direct calculations yield
\be\label{BL}
\mathcal{B}[(\psi, \phi), (\eta, \xi)]=\mathcal{L}(\eta, \xi),
\ee
where
\begin{align*}
\mathcal{B}[(\psi, \phi), (\eta, \xi)]=\int_{-1}^1\int_{r_0}^{r_1}&\Bigg(r\bar\rho_t(1-\bar M_{1t}^2)\p_r\psi\p_r\eta+r\bar\rho_t\p_{x_3}\psi\p_{x_3}\eta+\frac{r\bar\rho_t\bar U_{1t}}{\bar c_t^2}\phi\cdot\p_r\eta\\
&+r\p_r\phi\p_r\xi+r\p_{x_3}\phi\p_{x_3}\xi-\frac{r\bar\rho_t\bar U_{1t}}{\bar c_t^2}\p_r\psi\cdot\xi+\frac{r\bar\rho_t}{\bar c_t^2}\phi\xi\Bigg) drdx_3,
\end{align*}
and
\begin{align*}
\mathcal{L}(\eta, \xi)=&\int_{-1}^1\int_{r_0}^{r_1}(\tilde{\tilde{G}}_1\cdot\p_r\eta+\tilde{\tilde{G}}_3\cdot\p_{x_3}\eta-\tilde{\tilde{G}}_4\xi)drdx_3+\int_{-1}^1\eps r_1\bar\rho_t(1-\bar M_{1t}^2)\tilde U_{1, ex}\eta(r_1, x_3)dx_3\\
&-\int_{-1}^1\left(\tilde G_1\eta(r_1, x_3)r_1-\tilde G_1\eta(r_0, x_3)r_0\right)dx_3-\int_{r_0}^{r_1}\left(\tilde G_3\eta(r, 1)-\tilde G_3\eta(r, -1)\right)dr.
\end{align*}

It should be noted that the special structure also exist here, that is, the mixed terms in $\mathcal{B}[(\psi,\phi),(\psi,\phi)]$ cancel each other, so that the coercivity of the bilinear operator $\mathcal{B}[(\psi,\phi),(\psi,\phi)]$ is valid,
\begin{align*}
\mathcal{B}[(\psi, \phi), (\psi, \phi)]&=\int_{\bar{\mathbb{D}}}\bigg(\bar\rho_t(1-\bar M_{1t}^2)(\p_r\psi)^2+\bar\rho_t(\p_{x_3}\psi)^2+(\p_r\phi)^2+(\p_{x_3}\phi)^2+\frac{\bar\rho_t}{\bar c_t^2}\phi^2\bigg) rdrdx_3,\\
&\geq C(||\psi||_{H^1(\bar{\mathbb{D}})}^2+||\phi||_{H_0^1(\bar{\mathbb{D}})}^2),
\end{align*}
for some positive constant $C$. Note that the boundedness of $\mathcal{L}(\eta, \xi)$ is also obvious. Thus, the Lax-Milgram theorem tells that there exists a unique $(\psi, \phi)\in\mathscr{H}$ to \eqref{416}.

By applying the regularity theory developed in \cite{Bae1}, we have $(\psi, W_6)\in (C^{2, \al}(\bar{\mathbb{D}}))^2$ with
\begin{eqnarray}\label{C1al}
	\begin{split}	
		\|(\psi, W_6)\|_{C^{2,\al}(\bar{\mathbb{D}})}
		\leq C
	\bigg(
	&\|\tilde G_1\|_{C^{1,\al}(\bar{\mathbb{D}})}+\|\tilde G_3\|_{C^{1,\al}(\bar{\mathbb{D}})}
	+\|\tilde G_4\|_{C^{\al}(\bar{\mathbb{D}})}+\|\tilde U_{1, ex}\|_{C^{1,\al}(\bar \Sigma^{1})}\\
&+\|\tilde U_{3, en}\|_{C^{1,\al}(\bar \Sigma^{0})}
	+\epsilon\|\tilde\Phi_{en}\|_{C^{2,\al}(\bar \Sigma^{0})}
	+\epsilon\|\tilde\Phi_{ex}\|_{C^{2,\al}(\bar \Sigma^{1})}
	\bigg).
	\end{split}
\end{eqnarray}

Next, we consider the following linear system for $W_1, W_3$:
\be\label{499}
\begin{cases}
\p_r\left(r\bar\rho_t(1-\bar M_{1t}^2)W_1\right)+\p_{x_3}(r\bar\rho_t W_3)=-\p_r\left(\frac{r\bar\rho_t \bar U_{1t}}{\bar c_t^2}W_6\right)
+\p_r\left(r\bar\rho_t\bar M_{1t}\bar M_{2t}W_2\right)\\
\quad\quad\quad\quad\quad\quad\quad+\p_r\left(\frac{r\bar\rho_t\bar U_{1t}}{(\ga-1)A_0}W_4-\frac{r\bar\rho_t\bar U_{1t}}{\bar c_t^2}W_5\right)+\p_r(r G_1)+\p_{x_3}(r G_3),\\
\p_r W_3-\p_{x_3} W_1=G_2,\\
W_1(r_1, x_3)=\eps \tilde U_{1, ex}(x_3),\\
W_3(r_0, x_3)=\eps\tilde U_{3, en}(x_3),\\
W_3(r, \pm 1)=0, \quad \forall r\in[r_0, r_1].
\end{cases}
\ee

We extend ${\bf W}$ and $G_1, G_3, G_4$ as follows:
\be
(W_i^e, G_1^e, G_4^e)(r, x_3)=
\begin{cases}
(W_i^e, G_1, G_4)(r, x_3), \quad\quad\quad\,\, (r, x_3)\in\bar{\mathbb{D}},\\
(W_i^e, G_1, G_4)(r, -2-x_3), \quad (r, x_3)\in[r_0, r_1]\times[-3, -1],\\
(W_i^e, G_1, G_4)(r, 2-x_3), \quad\,\,\,\,  (r, x_3)\in[r_0, r_1]\times[1, 3],
\end{cases}
\ee
where $i=1, 2, 4, 5 ,6 $, and
\be
(W_3^e, G_3^e)(r, x_3)=
\begin{cases}
(W_3, G_3)(r, x_3), \quad\quad\quad\,\,\quad (r, x_3)\in\bar{\mathbb{D}},\\
-(W_3, G_3)(r, -2-x_3), \quad (r, x_3)\in[r_0, r_1]\times[-3, -1],\\
-(W_3, G_3)(r, 2-x_3), \quad\,\,\,\,  (r, x_3)\in[r_0, r_1]\times[1, 3].
\end{cases}
\ee
And $(\tilde U_{3, en}, \tilde\Phi_{en})$, $(\tilde U_{1, ex}, \tilde\Phi_{ex})$, $\tilde{b}$ can be extended similarly, which are still denoted by $(\tilde U_{3, en}, \tilde\Phi_{en})$, $(\tilde U_{1, ex}, \tilde\Phi_{ex})$, and $\tilde{b}$. Then $W_1^e, W_3^e$ satisfy:
\be\label{499}
\begin{cases}
\p_r\left(r\bar\rho_t(1-\bar M_{1t}^2)W_1^e\right)+\p_{x_3}(r\bar\rho_t W_3^e)=-\p_r\left(\frac{r\bar\rho_t \bar U_{1t}}{\bar c_t^2}W_6^e\right)+\p_r\left(r\bar\rho_t\bar M_{1t}\bar M_{2t}W_2^e\right)\\
\quad\quad\quad\quad\quad+\p_r\left(\frac{r\bar\rho_t\bar U_{1t}}{(\ga-1)A_0}W_4^e-\frac{r\bar\rho_t\bar U_{1t}}{\bar c_t^2}W_5^e\right)+\p_r(r G_1^e)+\p_{x_3}(r G_3^e),\\
\p_r W_3^e-\p_{x_3} W_1^e=G_2^e,\\
W_1^e(r_1, x_3)=\eps \tilde U_{1, ex}(x_3),\quad \forall x_3\in[-3, 3],\\
W_3^e(r_0, x_3)=\eps\tilde U_{3, en}(x_3),\quad \forall x_3\in[-3, 3].
\end{cases}
\ee
By using the compatibility conditions \eqref{compati} and \eqref{Gcompat}, one has $G_1^e, G_3^e\in C^{2, \al}(\mathbb{D}^e)$, which further yields that
$(W_1, W_3)\in C^{2, \al}(\bar{\mathbb{D}})$ with the following estimate
\be\label{zuizhong}
||(W_1, W_3)||_{C^{2, \alpha}(\bar{\mathbb{D}})}\leq C\left(\sum\limits_{j=1}^3||G_j||_{C^{2, \alpha}(\bar{\mathbb{D}})}+\sum\limits_{j=4}^5||W_j||_{C^{2, \alpha}(\bar{\mathbb{D}})}+||\tilde{U}_{1, ex}||_{C^{1, \alpha}(\bar{\Sigma}^0)}+||\tilde{U}_{3, en}||_{C^{1, \alpha}(\bar{\Sigma}^1)}\right)
\ee
holding according to the standard Schauder estimates.

Combining \eqref{C1al} and \eqref{zuizhong}, for any fixed ${\bf W}^\sharp\in \mathcal{S}_{\delta_0}$, we have so far constructed a ${\bf W}\in C^{2,\al}(\bar{\mathbb{D}})$ with
\begin{equation}
	\sum_{j=1}^{6}\|W_j\|_{C^{2,\al}(\bar{\mathbb{D}})}\leq C_*^{'}(\epsilon+\delta_0^2+\epsilon\delta_0).
\end{equation}
Selecting $\delta_0=2 C_*^{'}\epsilon $ so that
\begin{equation}
	\|{\bf W}\|_{\mathcal{S}_{\delta_0}}\leq C_*^{'}\epsilon +C_*^{'}(2 C_*^{'}+1)\epsilon\delta_0<\delta_0,
\end{equation}
for any $\epsilon<\epsilon_*:=\frac{1}{2C_*^{'}(1+2C_*^{'})}$.

Now we turn to finish the proof of Theorem \ref{th3} by the contraction mapping principle. To accomplish this, we let
${\bf W}^{\sharp, k}\in\mathcal{S}_{\delta_0}, k=1,2$, and thus ${\bf W}^k=\mathcal{P}({\bf W}^{\sharp, k})$.
Define the deviations of every variables as
\begin{equation*}
	{\bf Y}^\sharp={\bf W}^{\sharp, 1}-{\bf W}^{\sharp, 2}, \,
	{\bf Y}={\bf W}^1-{\bf W}^2.
\end{equation*}

The estimates of $Y_4$ and $Y_5$ can be derived similarly as in Subsection \ref{subsec1}. And $Y_2$ is handled in the same way as $Y_4$ and $Y_5$. Next, it follows from \eqref{49} that
\be\label{332}
\begin{cases}
\p_r\left(r\bar\rho_t(1-\bar M_{1t}^2)Y_1\right)+\p_{x_3}(r\bar\rho_t Y_3)+\p_r\left(\frac{r\bar\rho_t \bar U_{1t}}{\bar c_t^2}Y_6\right)\\
\quad\quad=\p_r\left(r\bar\rho_t\bar M_{1t}\bar M_{2t}Y_2\right)+\p_r\left(\frac{r\bar\rho_t\bar U_{1t}}{(\ga-1)A_0}Y_4-\frac{r\bar\rho_t\bar U_{1t}}{\bar c_t^2}Y_5\right)\\
\quad\quad\quad+\p_r\left(r\left(G_1({\bf W}^{\sharp, 1})- G_1({\bf W}^{\sharp, 2})\right)\right)+\p_{x_3}\left(r\left(G_3({\bf W}^{\sharp, 1})-G_3({\bf W}^{\sharp, 2})\right)\right),\\
\p_r Y_3-\p_3 Y_1=G_2({\bf W}^{\sharp, 1})-G_2({\bf W}^{\sharp, 2}),\\
(\p_r^2+\frac1 r\p_r+\p_{x_3}^2)Y_6+\frac{\bar\rho_t\bar U_{1t}}{\bar c_t^2}Y_1-\frac{\bar\rho_t}{\bar c_t^2}Y_6=-\frac{\bar\rho_t\bar U_{2t}}{\bar c_t^2}W_2-\frac{\bar\rho_t}{(\ga-1)A_0}Y_4+\frac{\bar\rho_t}{\bar c_t^2}Y_5+G_4({\bf W}^{\sharp, 1})-G_4({\bf W}^{\sharp, 2}),\\
Y_1(r_1, x_3)=0,\\
Y_3(r_0, x_3)=0,\\
Y_3(r, \pm 1)=0,\\
Y_6(r_0, x_3)=0, \quad Y_6(r_1, x_3)=0,
\end{cases}
\ee
here $G_j({\bf W}^{\sharp, k}), j=1,2,3,4, k=1,2$ are functions defined in \eqref{49} by replacing ${\bf W}^{\sharp}$ by ${\bf W}^{\sharp, k}$.

Consequently, we have the following estimates
\begin{equation}\label{333}
\begin{split}
&\quad\|Y_1\|_{C^{1,\al}(\bar{\mathbb{D}})}+\|Y_3\|_{C^{1,\al}(\bar{\mathbb{D}})}
+\|Y_6\|_{C^{1,\al}(\bar{\mathbb{D}})}\\
&\leq C\left(
\sum_{j=1}^{4}\|G_j({\bf W}^{\sharp, 1})-G_j({\bf W}^{\sharp, 2})\|_{C^{1,\al}(\bar{\mathbb{D}})}
+\|(Y_4,Y_5)\|_{C^{1,\al}(\bar{\mathbb{D}})}\right)\leq C\delta_0\|{\bf Y}^\sharp\|_{C^{1,\al}(\bar{\mathbb{D}})}.
\end{split}
\end{equation}
Therefore, we obtain
\begin{equation}
	\|{\bf Y}\|_{C^{1,\al}(\bar{\mathbb{D}})}
	\leq C_{**}^{'} \delta_0
	\|{\bf Y}^\sharp\|_{C^{1,\al}(\bar{\mathbb{D}})}.
\end{equation}
The contraction of the mapping $\mathcal{P}$ in terms of $C^{1, \alpha}$ norm is verified by selecting $\delta_0$ as $C_{**}^{'} \delta_0\leq\frac2 3$. Then it turns out that there exists a unique fixed point ${\bf W}\in\mathcal{S}_{\delta_0}$ of $\mathcal{P}$. The proof of Theorem \ref{th3} is completed.

~\\\
\noindent{\bf \Large{Acknowledgement}.}

Weng is partially supported by National Natural Science Foundation of China 11971307, 12071359, 12221001.


\begin{thebibliography}{10}


\bibitem{Nirenberg1}
S. Agmon, A. Douglis, L. Nirenberg,  {\it Estimates near the boundary for solutions of elliptic partial differential equations satisfying general boundary conditions}. Commun. Pure Appl. Math. {\bf 17} (1964) 35-92.


\bibitem{abdg98}
C. Amrouche, C. Bernardi, M. Dauge and V. Girault. {\it Vector potentials in three dimensional nonsmooth domains.} Math. Methods Appl. Sci.. 21 (1998), 823-864.

\bibitem{Bae2}
M. Bae, B. Duan, C. Xie, {\it Subsonic solutions for steady Euler-Poisson system in two-dimensional nozzles}, SIAM J. Math. Anal., {\bf 46} (2014) 3455-3480.
\bibitem{Bae3}
M. Bae, B. Duan, C. Xie, {\it Two-dimensional subsonic flows with self-gravitation in bounded domain}, Math. Models Methods Appl. Sci. {\bf 25}(14) (2015) 2721-2747.
\bibitem{Bae1}
M. Bae, B. Duan, C. Xie, {\it Subsonic flow for multidimensional Euler-Poisson system}, Arch. Ration. Mech. Anal., {\bf 220} (2016) 155-191.

\bibitem{Bae4}
M. Bae, B. Duan, C. Xie, {\it Structural stability of supersonic solutions to the Euler-Poisson system}, Arch. Ration. Mech. Anal., {\bf 239} (2021) 679-731.

\bibitem{BaeYang1}
M. Bae, Y. Park, {\it Radial transonic shock solutions of Euler-Poisson system in convergent nozzles}, Discrete Contin. Dyn. Syst. Ser. S, {\bf 11} (5) (2018) 773-791.
\bibitem{BaeYang2}
M. Bae, Y. Park, {\it Three-dimensional supersonic flows of Euler-Poisson system for potential flow}, Commun. Pure Appl. Anal. {\bf 20} (2021), no. 7-8, 2421¨C2440.
\bibitem{BaeWeng}
M. Bae, S. Weng, {\it 3-D axisymmetric subsonic flows with nonzero swirl for the compressible Euler-Poisson system}, Ann. Inst. Henri Poincar\'{e}, Anal. Non Lin\'{e}atre {\bf 35} (1) (2018) 161-186.
\bibitem{Lipman2}
L. Bers, {\it Mathematical Aspects of Subsonic and Transonic Gas Dynamics}, Wiley, New York, 1958.
\bibitem{CC}
C. Chen, C. Xie, {\it Three domensional steady subsonic Euler flows in bounded nozzles}, J. Differ. Equ. {\bf 256} (11) (2014) 3684-3708.
\bibitem{cs17}
C. H. Arthur Cheng and Steve Shkoller. {\it Solvability and Regularity for an elliptic system prescribing the Curl, Divergence, and Partial Trace of a Vector Field on Sobolev-Class Domains.} J. Math. Fluid Mech. 19 (2017) 375-422.
\bibitem{Nirenberg}
A. Douglis, L. Nirenberg, {\it Interior estimates for elliptic systems of partial differential equations}, Comm. Pure Appl. Math. {\bf 8} (1955) 503-538.

\bibitem{lh}
Q. Han, F. Lin, {\it Elliptic partial differential equations}, Courant Institute of Math. Sci., NYU (1997).

\bibitem{ky09}
H. Kozono, T. Yanagisawa. {\it $L^r$-variational inequality for vector fields and the Helmholtz-Weyl decomposition in bounded domains.} Indiana University Mathematics Journal, Vol. 58, No. 4, 1853-1920 (2009).

\bibitem{LuoXin}
T. Luo, Z. Xin, {\it Transonic shock solutions for a
system of Euler-Poisson equations}, Comm. Math. Sci., {\bf 10} (2012) 419-462.

\bibitem{LMZ1}
J. Li, M. Mei, G. Zhang, K. Zhang, {\it Steady hydrodynamic model of semiconductors with sonic boundary: (\uppercase\expandafter{\romannumeral1}) Subsonic doping progile}, SIAM J. Math. Anal. {\bf 49} (6) (2017) 4767-4811.
\bibitem{LMZ2}
J. Li, M. Mei, G. Zhang, K. Zhang, {\it Steady hydrodynamic model of semiconductors with sonic boundary: (\uppercase\expandafter{\romannumeral1}) Supersonic doping progile}, SIAM J. Math. Anal. {\bf 50} (1) (2018) 718-734.

\bibitem{WX1}
C. Wang and Z. Xin, {\it On a degenerate frree boundary problem and continuous subsonic-sonicflows in a convergent nozzle}, Arch. Ration. Mech. Anal.
\textbf{208} (2013), no. 3, 911-975.

\bibitem{WX4}
C. Wang and Z. Xin, {\it On sonic curves of smooth subsonic-sonic and transonic flows}, SIAM J. Math. Anal.
\textbf{48} (2016), no. 4, 2414-2453.

\bibitem{WX2}
C. Wang and Z. Xin, {\it Smooth Transonic Flows of Meyer Type in De Laval Nozzles}, Arch. Ration. Mech. Anal.
\textbf{232} (2019), no. 3, 1597-1647.

\bibitem{WX3}
C. Wang and Z. Xin, {\it Regular Subsonic-sonic flows in general nozzles}, Adv. Math.
\textbf{380} (2021), no. 107578, 56 pp.

\bibitem{WXX}
S. Weng, C. Xie, Z. Xin, {\it Structure stability of the transonic shock problem in a divergent three-dimensional axisymmetric perturbed nozzle}, SIAM J. Math. Anal., {\bf 53} (2021) 279-308.
\bibitem{Weng6}
S. Weng, {\it On sready subsonic flows for the Euler-Poisson models}, SIAM J. Math. Anal. {\bf 46} (1) (2014) 757-779.
\bibitem{Weng5}
S. Weng, {\it A new formation for the 3-D Euler equations with an application to subsonic flows in a cylinder}, Indiana Univ. Math. J. {\bf 64} (6) (2015) 1609-1642.

\bibitem{Weng3}
S. Weng, Z. Xin, {\it A deformation-curl decomposition for three dimensional steady Euler equations (in Chinese)}, Sci Sin Math, 2019, 49: 307-320.
\bibitem{Weng4}
S. Weng, {\it A deformation-curl-Poisson decomposition to the three dimensional steady Euler-Poisson system with applications}, J. Differential Equations
\textbf{267} (2019) no. 11, 6574-6603.
\bibitem{Weng1}
S, Weng, Z, Xin and H, Yuan, {\it Steady compressible radially symmetric flows with nonzero angular velocity in an annulus}, J. Differential Equations, \textbf{286} (2021) 433-454.
\bibitem{Weng2}
S. Weng, Z. Xin and H. Yuan, {\it On Some Smooth Symmetric Transonic Flows with Nonzero Angular Velocity and Vorticity}, Math. Models Methods Appl. Sci£¬ \textbf{31} (2021) no. 13, 2773-2817.
\bibitem{Weng7}
S. Weng, Z. Xin, {\it Existence and stability of the cylindrical transonic shock solutions under three dimensional perturbations}, arXiv: 2304.02429.
\bibitem{Xie2011}
C. Xie, Z. Xin, T. Luo, J. Rauch, {\it Stability of transonic shock solutions for Euler¨CPoisson
equations}, Arch. Ration. Mech. Anal., {\bf 202} (2011) 787¨C827.

\bibitem{XY1}
Z. Xin, H. Yin, {\it Transonic shock in a curved nozzle, 2-D and 3-D complete Euler systems}, J. Differential Equations, {\bf 245} (4) (2008) 1014-1085.








\end{thebibliography}
\end{document}